\setlist[enumerate,2]{label={(\alph*)}}
\renewcommand*{\backrefalt}[4]{%
  \ifcase #1 %
  [no citations]%
  \or
  [see p.~#2]%
  \else
  [see pp.~#2]%
  \fi
}
\newtheorem{theorem}{Theorem}[section]
\newtheorem{proposition}[theorem]{Proposition}
\newtheorem{remark}[theorem]{Remark}
\newtheorem{corollary}[theorem]{Corollary}
\newtheorem{definition}[theorem]{Definition}
\numberwithin{equation}{section}
\newcommand\R{\mathbb{R}}
\newcommand\bx{\boldsymbol{x}}
\newcommand\by{\boldsymbol{y}}
\newcommand\bu{\boldsymbol{u}}
\newcommand\bphi{\boldsymbol{\Phi}}
\newcommand\bvphi{\boldsymbol{\varphi}}
\newcommand\bomega{\boldsymbol{\omega}}
\newcommand{\1}{\mathbb{1}}
\newcommand{\subeq}[1]{\textsubscript{\textnormal{#1}}}
\DeclareMathOperator{\diag}{diag}
\DeclareMathOperator{\meas}{meas}
\DeclareMathOperator{\card}{card}
\DeclareMathOperator{\dist}{dist}
\begin{document}

\title{Hyperbolicity of a semi-Lagrangian formulation of the hydrostatic free-surface Euler system}

\date{December 2024}

\hypersetup{pdfauthor={Bernard Di Martino, Chourouk El Hassanieh, Edwige Godlewski, Julien Guillod, \& Jacques Sainte-Marie}}

\author{Bernard Di Martino\textsuperscript{1,2} \and Chourouk El Hassanieh\textsuperscript{1,3} \and Edwige Godlewski\textsuperscript{1,3} \and Julien Guillod\textsuperscript{1,3,4,*} \and Jacques Sainte-Marie\textsuperscript{1,3}}

\maketitle

\footnotetext[1]{Inria, Team ANGE, Inria Paris, 48 rue Barrault, 75013 Paris, France}
\footnotetext[2]{Université de Corse, CNRS, Laboratoire Sciences Pour l'Environnement (SPE), Campus Grimaldi, 20250 Corte, France}
\footnotetext[3]{Sorbonne Université, CNRS, Université Paris Cité, Laboratoire Jacques-Louis Lions (LJLL), 75005 Paris, France}
\footnotetext[4]{École Normale Supérieure, Université PSL, CNRS, Département de Mathématiques et applications, 75005 Paris, France}
\renewcommand{\thefootnote}{\fnsymbol{footnote}}
\footnotetext[1]{Corresponding author: \href{mailto:julien.guillod@sorbonne-universite.fr}{julien.guillod@sorbonne-universite.fr}}

\begin{abstract}
    By a semi-Lagrangian change of coordinates, the hydrostatic Euler equations describing free-surface sheared flows is rewritten as a system of quasilinear equations, where stability conditions can be determined by the analysis of its hyperbolic structure. The system one obtains can be written as a quasi linear system in time and horizontal variables and involves no more vertical derivatives. However, the coefficients in front of the horizontal derivatives include an integral operator acting on the new vertical variable. The spectrum of these operators is studied in detail, in particular it includes a continuous part. Riemann invariants are then determined as conserved quantities along the characteristic curves. Examples of solutions are provided, in particular stationary solutions and solutions blowing-up in finite time. Eventually, we propose an exact multilayer $\mathbb{P}_0$-discretization, which could be used to solve numerically this semi-Lagrangian system, and analyze the eigenvalues of the corresponding discretized operator to investigate the hyperbolic nature of the approximated system.
\end{abstract}
\textsf{\textbf{Keywords}}\quad{}hydrostatic Euler, free surface flows, semi-Lagrangian formulation, hyperbolicity, spectrum, multilayer approximation\\
\textsf{\textbf{MSC classes}}\quad{35Q31, 35F50, 35L02, 45K05, 35P05, 35P15, 76B99}\\
\textsf{\textbf{Reference}}\quad{}Nonlinearity \textbf{38}, 015018, 2025\\
\textsf{\textbf{DOI}}\quad{}\doi{10.1088/1361-6544/ad9b65}
\newpage

\tableofcontents 

\newpage

\section{Introduction}

The classical shallow-water equations are widely used to describe irrotational flows of incompressible fluids in large-scale domains and for which the pressure is assumed to be hydrostatic \cite{gerbeau,marche,saint-venant}.
The range of applications of the shallow-water models often includes flows in coastal regions, rivers, and channels as well as atmospheric flows or debris flows~\cite{Bouchut2003,saleri,levermore,zeitlin_book}.
However the shallow-water equations, due to their depth-averaged structure, describe only the horizontal profile of the velocity and information on the vertical component of the velocity is lost.
Through the averaging process, the modeling of a $(d+1)$-dimensional flow (where $d$ represents the number of horizontal coordinates) is reduced to a set of equations in dimension $d$.
For a better approximation of some truly $(d+1)$-dimensional physical phenomena such as shear flows, the vertical contribution of the horizontal velocity must be taken into consideration.
\medskip

In this paper we shall consider the motion of an incompressible ideal fluid described by the Euler system under the assumption that the pressure is hydrostatic. This model was first introduced by Benney \cite{Benney-SomePropertiesLong1973}.
This system provides full access to the vertical velocity profile  however it is not hyperbolic, unlike the shallow-water equations.
Indeed while in the irrotational setting  the shallow-water flow  can be described by a system of hyperbolic partial differential equations, the rotational flow described by the hydrostatic Euler equations do not seem to fall under any classical classification.
However, the hydrostatic Euler system with free surface evolution that we consider can be rewritten using a specific change of coordinates referred to as the semi-Lagrangian formulation.
This formulation was first introduced by Zakharov \cite{zakharov1980} to derive an infinite system of conservation laws for shear flows in shallow water.
The transformation maps the free-surface flow onto a fixed domain with flat boundaries. Note that it differs from the classical so-called sigma-transformation~\cite{ale_telemac} and it also differs from the Lagrangian description of the flow.
The advantage of the change of variables is that it allows us to rewrite the hydrostatic Euler system in a generalized quasi-linear form.
More precisely, the rewriting gives a quasi-linear system of equations where the differential operator involves only time and horizontal derivatives but the coefficient in front of the horizontal derivatives has an integral operator in the new vertical variable.
Therefore the classical hyperbolicity condition on a matrix is replaced by an analysis of the spectrum of an operator including both multiplicative and integral terms. Unfortunately, in general, explicit expressions of the eigenvalues are difficult to obtain, and the attempt to diagonalize the hydrostatic Euler system is only partially successful.

The approach we follow was presented in \cite{teshukov1985} by Teshukov who introduced a generalized notion of hyperbolicity and extended the theory of characteristics.
With this generalized notion of hyperbolicity, sufficient criteria for the stability of flows were established in the case of a monotonic velocity profile \cite{teshukov1994} or for non-monotonic profiles satisfying further assumptions \cite{teshukov1995}. It is shown in \cite{chesnokov2017} that all shear flows with a monotone and convex velocity profile are stable, and moreover sufficient conditions for the stability of piecewise linear continuous and discontinuous velocity profiles are determined.
\medskip

Local well-posedness of the hydrostatic Euler system with free surface seems to be open in general, however results without free-surface or under some monotonicity or convexity conditions are available.
In \cite{brenier1999}, Brenier shows that the hydrostatic Euler equations without free-surface are locally solvable for smooth solutions with a strictly convex velocity profile (local Rayleigh condition) using a similar semi-Lagrangian reformulation. 
We note that the assumptions of constant slopes at the boundaries of the domain required in \cite{brenier1999} was removed later on in \cite{Masmoudi2012}, without using a semi-Lagrangian change of variable.
However, the local convexity assumption on the velocity field is required: \cite{Renardy2009} shows that the linearization around some velocity profile is ill-posed.
For analytical initial data, local existence and uniqueness was obtained in \cite{Kukavica2011} using a Cauchy–Kovalevskaya type argument.
In \cite{teshukov1992}, Teshukov studied the local well-posedness of the free-surface problem under some monotonicity and generalized hyperbolicity conditions, but the proof lacks some precision.
\medskip

It is very interesting to note that the quasilinear equations obtained by the semi-Lagrangian formulation are very similar to the equations describing particular solutions of some kinetic equation. This link between fluid equations and kinetic equations seems to originate with the work of Brenier \cite{Brenier-1980}.
This kinetic equation has different name in the literature: Vlasov equation with quasi-neutral equation \cite{Besse2011}, Vlasov-Dirac-Benney equation \cite{Bardos-VDB2013} or quasineutral limit of Vlasov-Poisson \cite{HanKwan-Quasineutral2016}.
Here we adopt the terminology Vlasov-Dirac-Benney equation, and in short it is the classical Vlasov-Poisson equation, where the Coulomb interacting potential is replaced by the Dirac measure $\delta$.
As explained in \cite[§2]{Besse2011} and \cite[§6]{Bardos-VDB2013}, a one-bump continuous profile Ansatz of the distribution function in the Vlasov-Dirac-Benney equation leads to equations, which are almost identical to the semi-Lagrangian formulation of the Euler hydrostatic equations. The only difference is that the equation coming from Vlasov-Dirac-Benney, see \cite[equation (33)]{Bardos-VDB2013} has an additional term $\rho\partial_{x}\rho$, whereas the semi-Lagrangian formulation of Euler hydrostatic, see \eqref{eq:new_system}, has not the corresponding term $H\partial_{x}H$.
Moreover, under some hyperbolicity assumptions, Besse was able in~\cite{Besse2011} to diagonalize the corresponding quasilinear equations using the Riemann invariants, more precisely he was able to prove existence and uniqueness, and that the diagonalized system is equivalent to the original one.
Even though the characteristic equations for the eigenvalues are not exactly the same in the two works (since the equations are not exactly the same), it appears that a strict convexity assumption on the velocity profile is sufficient to ensure hyperbolicity in both cases.
It would be interesting to see if the very technical argument developed by Besse to diagonalize his system, could also be applied to our slightly different quasilinear equations.
We note that well-posedness results are also proved in \cite{Bardos-VDB2013} and \cite{HanKwan-Quasineutral2016}, however using different techniques.
\medskip

The paper presents several improvements and a few novelties.
First we provide rigorous results about the existence of the semi-Lagrangian change of variable  and thus provide that the two formulations are equivalent, at least locally in time.
Secondly, we extend the results given by Teshukov in \cite{teshukov1985} on the spectrum of the given integral and multiplicative operator, by rigorously analyzing its spectrum and determining its different parts (point and discrete spectrum as well as continuous, residual and essential spectrum). We localize the spectrum more finely and determine limiting cases for which the existence of the two real eigenvalues given in \cite{chesnokov2017} is guaranteed or not.
Eventually a multilayer formulation of the transformed system is proposed and analyzed by localizing the eigenvalues of the resulting discrete system and providing conditions for having real eigenvalues.

It is interesting to note that the multilayer formulation of the transformed system is exactly the multilayer formulation without exchange term introduced by Audusse in~\cite{audusse2005}. In particular compared to \cite{audusse2005}, our derivation is very different in that no approximations are involved, except on the initial data. We also note that a piecewise linear discretization was introduced in~\cite{chesnokov2017}.
\medskip

The assumptions made to establish that the spectrum is real (hence some sort of generalized hyperbolicity) together with some numerical experiments suggest that complex eigenvalues may exist for some physical velocity profiles. It would be interesting to exhibit precisely and study these situations, in both original and semi-Lagrangian frameworks.

Because when the number of layers increases, our discrete results are consistent with those obtained at the continuous level, the multilayer formulation~\cite{JSM_JCP,JSM_M2AN,BDGSM,fernandeznieto} is interesting in practice for the numerical approximation of the transformed system and of the hydrostatic Euler system with free-surface. The simulations of geophysical flows take place over large domains in space and time and thus a compromise between stability and accuracy has to be found for their numerical approximation.  A discretization of the multilayer version of the transformed system has several advantages:
\begin{itemize}[nolistsep]
    \item only derivatives along the horizontal axis are present;
    \item no exchange term between the layers appears in the formulation;
    \item estimates of the eigenvalues are available, which eventually could provide stability conditions for the time discretization.
\end{itemize}
The numerical scheme proposed in~\cite{audusse2005} can be used but has to be adapted \emph{e.g.} for the numerical treatment of the boundary conditions, see also~\cite{art_3d}.
The price to pay when considering the transformed system is the requirement to reinterpolate the variables when the change of variable becomes singular and thus hardly invertible. Numerical analysis of the discretized system as well as numerical results and comparison with traditional multilayer formulations will be the subject of a future study.
\medskip

The paper is organized as follows.
In \cref{sec:semi_lagrangian_formulation}, we introduce the semi-Lagrangian formulation of the hydrostatic Euler system and prove that the change of variable is locally well-defined (\cref{thm:wp}) and that the two formulations are equivalent, at least locally in time (\cref{thm:change,thm:reciproque}).

In \cref{sec:particular_solutions}, we discuss some examples of more or less explicit solutions in both formulations. In particular, we emphasize that the change of variable can become ill-defined in finite-time and we underline the degree of freedom in the definition of the change of variable (\cref{subsec:stat_sols_z}). We 
analyze the stationary solutions in both formulations (\cref{subsec:stat_sols}). The shallow water limit and an example of a flow with non-trivial vorticity are given respectively in \cref{subsec:shallow_water,subsec:flow_vorticity}.

In \cref{sec:spectrum_riemann}, the spectrum of the integral and multiplicative operator appearing in front of the horizontal derivatives is analyzed and the associated Riemann invariants are discussed. In \cref{subsec:spectrum}, we characterize rigorously the different parts of the spectrum, namely, the point, discrete, continuous, residual and essential spectrum (\cref{thm:spectrum-topo,cor:spectrum}). Moreover we localize the spectrum with precise explicit bounds (\cref{prop:spectrum_localization}). In \cref{subsec:limiting_cases} we provide conditions under which two real eigenvalues exist (\cref{prop:limit_case,prop:u_convexe}) and a counterexample showing our result in nearly optimal (\cref{rem:optimal,prop:u_convexe_counterexample}). In \cref{subsec:riemann}, the Riemann invariants associated to the two real eigenvalues and to the continuous part of the spectrum are discussed. All this section is done under the assumption that the topography is flat. However, this can be generalized to variable topography as explained in \cref{subsec:avectopo}.

Finally, \cref{sec:multi_layer} is dedicated to a multilayer approximation in the vertical variable of the system in the semi-Lagrangian formulation. Surprisingly, the multilayer discretization of this system is an exact solution of the continuous system since the dependence on the vertical variable in only through an integral (\cref{cor:multi_layer_exact}). The eigenvalue of the corresponding matrix are determined (\cref{prop:discret_spectrum}) and analyzed (\cref{prop:discrete_localization,prop:discret_spectrum}). In the end, we analyze the convergence of the spectrum of the approximation towards the spectrum of the continuous case under the assumption that the velocity profile is monotonic and convex (\cref{prop:eigen_convergence}).
\medskip

Let us  note that for simplicity, most of the results concern the one-dimensional case, \emph{i.e.}, one horizontal variable ($d=1$) and one vertical variable, however some of them also concern the case of two horizontal variables ($d=2$).

\section{Semi-Lagrangian formulation of hydrostatic Euler system}\label{sec:semi_lagrangian_formulation}

The aim of this section is to rewrite the hydrostatic Euler
system in a generalized quasi-linear form using a semi-Lagrangian
formulation, and prove that both formulations are in some sense equivalent at least for short time.

\subsection{Free-surface hydrostatic Euler system}

We denote respectively by $\bx\in\R^{d}$ for $d=1,2$ and $z$ the
coordinates in the horizontal and vertical directions (see \cref{fig:intro_setup}).
The surface elevation is described by a function
$\eta(t,\bx)$ and the bottom is given by a function $z_{b}(\bx)$
independent of time; we denote by $h(t,\bx)=\eta(t,\bx)-z_{b}(\bx)$
the water depth and by $\Omega_{t}$ the domain occupied by the fluid
at time $t$: 
\begin{equation}\label{eq:Omega_t}
    \Omega_{t}=\bigl\{(\bx,z)\in\R^{d}\times\R\,:\,z_{b}(\bx)<z<\eta(t,\bx)\bigr\}.
\end{equation}
Note that the variable $\bx$ is assumed to lie in an infinite domain,
which is counter intuitive at first glance. However, this choice circumvents
the difficulties arising if boundary conditions were to be imposed
on the horizontal directions which is not the interest of this paper.
The velocity field is given by $\left(\bu(t,\bx,z),w(t,\bx,z)\right)$
where $\bu(t,\bx,z)$ and $w(t,\bx,z)$ represent the horizontal and
vertical components, respectively. 
\begin{figure}
\centering
\begin{tikzpicture}[scale=1.2]
    \draw[thick,->] (0,0,0) -- (9,0,0) node[right] {$x$};
    \draw[thick,->] (0,0,0) -- (0,4,0) node[above] {$z$};
    \draw[thick,->] (0,0,0) -- (0,0,2) node[above] {$y$};
    \begin{scope}[canvas is xy plane at z=0]
        \draw[nearly opaque, fill=gray] plot[smooth, tension=1] coordinates {(0,3) (3,3.5) (6,2.8) (8.6,3.2)} -- (8.6,0) -- (8.6,0) -- (0,0) -- cycle;
        \draw[pattern=crosshatch] plot[smooth, tension=1] coordinates { (0,0.7) (2,1.5) (5,0.5) (7,1) (8.6,0.6)} -- (8.6,0) -- (0,0) -- cycle;
    \end{scope}
    \draw[nearly opaque, fill=gray] (0,0,0) -- (0,0,1.5) -- (0,3,1.5) -- (0,3,0) -- cycle;
    \draw[nearly opaque, pattern=crosshatch] (0,0,0) -- (0,0,1.5) -- (0,0.7,1.5) -- (0,0.7,0) -- cycle;
    \draw[nearly opaque, fill=gray] (8.6,0,0) -- (8.6,0,1.5) -- (8.6,3.2,1.5) -- (8.6,3.2,0) -- cycle;
    \draw[nearly opaque, pattern=crosshatch] (8.6,0,0) -- (8.6,0,1.5) -- (8.6,0.6,1.5) -- (8.6,0.6,0) -- cycle;
    \begin{scope}[canvas is xy plane at z=1.5]
        \draw[nearly opaque, fill=gray] plot [smooth, tension=1] coordinates { (0,3) (3,3.5) (6,2.8) (8.6,3.2)} -- (8.6,0) -- (8.6,0) -- (0,0) -- cycle;
        \draw[nearly opaque, pattern=crosshatch] plot [smooth, tension=1] coordinates { (0,0.7) (2,1.5) (5,0.5) (7,1) (8.6,0.6)} -- (8.6,0) -- (0,0) -- cycle;
        \fill[semitransparent,white] (1.9,0) rectangle (2.1,1.5);
        \draw (2,0.75) node[right,fill=white,semitransparent,text opacity=1,inner sep=0.8mm] {$z_b(\bx)$};
        \draw[thick,->] (2,0) -- (2,1.5);
        \draw[thick,->] (6,0) -- (6,2.8);
        \draw (5.95,1.8) node[right] {$\eta(t,\bx)$};
        \draw[thick,<->] (4.48,0.6) -- (4.5,3.18);
        \draw (4.45,1.8) node[right] {$h(t,\bx)$};
    \end{scope}
    \draw[thick,->] (1.5,1.7,0) -- (2.3,1.7,0) node[right] {$u(t,\bx,z)$};
    \draw[thick,->] (1.5,1.7,0) -- (1.5,2.5,0) node[right] {$w(t,\bx,z)$};
    \draw[thick,->] (1.5,1.7,0) -- (1.5,1.7,1) node[right] {$v(t,\bx,z)$};
    \draw (6.85,0.75,1.5) node[right,fill=white,semitransparent,text opacity=1,inner sep=0.8mm] {\;$\boldsymbol{n}_b(\bx)$};
    \draw[thick,->] (7,1,1.5) -- (6.9,0.5,1.5);
    \draw (6.85,3,1.5) node[right,fill=white,semitransparent,text opacity=1,inner sep=0.8mm] {\;$\boldsymbol{n}_s(\bx)$};
    \draw[thick,->] (7,2.8,1.5) -- (6.9,3.2,1.5);
\end{tikzpicture}
\caption{The three dimensional set-up for the hydrostatic Euler system with free-surface, where $\bx=(x,y)$ and $\bu=(u,v)$.}
\label{fig:intro_setup}
\end{figure}

The hydrostatic Euler system is given by:
\begin{align}
    \frac{\partial\bu}{\partial t}+\bu\cdot\nabla_{\bx}\bu+w\partial_{z}\bu+g\nabla_{\bx}\eta & =-\nabla_{\bx}p^{a},\label{eq:euler_3d1}\\
    \nabla_{\bx}\cdot\bu+\partial_{z}w & =0,\label{eq:euler_3d2}
\end{align}
where $p^{a}=p^{a}(t,\bx)$ is a given function corresponding to the
atmospheric pressure. 
This system has to be completed with boundary conditions. At the free
surface we have the kinematic boundary condition 
\begin{equation}\label{eq:free_surf}
    \frac{\partial\eta}{\partial t}+\bu_{s}\cdot\nabla_{\bx}\eta-w_{s}=0,
\end{equation}
where the subscript $s$ indicates the value of the considered quantity
at the free surface, for example $w_{s}(t,\bx)=w(t,\bx,\eta(t,\bx))$.
The kinematic boundary condition at the bottom consists in a classical
no-penetration condition, 
\begin{equation}\label{eq:bottom}
    \bu_{b}\cdot\nabla_{\bx}z_{b}-w_{b}=0,
\end{equation}
where the subscript $b$ indicates the value of the considered quantity
at bottom, for example $w_{b}(t,\bx)=w(t,\bx,z_{b}(\bx))$.

Together these equations define the following Cauchy problem for $\eta,\bu,w$:
\begin{equation}\label{eq:euler_3d}
    \left\{
    \begin{aligned}
        \frac{\partial\eta}{\partial t}+\bu_{s}\cdot\nabla_{\bx}\eta-w_{s} & =0 &  & \text{ in }(0,T)\times\mathbb{R}^{d},\\
        \bu_{b}\cdot\nabla_{\bx}z_{b}-w_{b} & =0 &  & \text{ in }(0,T)\times\mathbb{R}^{d},\\
        \frac{\partial\bu}{\partial t}+\bu\cdot\nabla_{\bx}\bu+w\partial_{z}\bu+g\nabla_{\bx}\eta & =-\nabla_{\bx}p^{a} &  & \text{ in }(0,T)\times\Omega_{t},\\
        \nabla_{\bx}\cdot\bu+\partial_{z}w & =0 &  & \text{ in }(0,T)\times\Omega_{t},\\
        \bu|_{t=0} & =\bu_{0} &  & \text{ in }\Omega_{0},\\
        \eta|_{t=0} & =\eta_{0} &  & \text{ in }\mathbb{R}^{d},
    \end{aligned}
    \right.
\end{equation}
where $\bu_{0}, \eta_{0}$ are given, and  $(0,T)\times\Omega_{t}$ is a sloppy notation for the space-time
domain 
\begin{align*}
    (0,T)\times\Omega_{t} & =\bigl\{(t,\bx,z)\in(0,T)\times\mathbb{R}^{d}\times\mathbb{R}\,:\,(\bx,z)\in\Omega_{t}\bigr\}\\
    & =\bigl\{(t,\bx,z)\in(0,T)\times\mathbb{R}^{d}\times\mathbb{R}\,:\,z_{b}(\bx)<z<\eta(t,\bx)\bigr\}.
\end{align*}

\begin{remark} \label{rem:w_from_divu}
The variable $w$ can be eliminated by integrating the divergence-free condition \eqref{eq:euler_3d}\subeq{4}:
\begin{equation}\label{eq:w}
    w = w_b - \int_{z_b(\bx)}^{z}(\nabla_{\bx}\cdot{\bu})\mathrm{d}Z
      = \bu_b\cdot\nabla_{\bx}z_b - \int_{z_b(\bx)}^{z}(\nabla_{\bx}\cdot{\bu})\mathrm{d}Z.
\end{equation}
In particular, we have
\begin{equation} \label{eq:ws_div}
w_{s}=w_{b}-\int_{z_{b}(\bx)}^{\eta(t,\bx)}(\nabla_{\bx}\cdot\bu)\,\mathrm{d}Z,
\end{equation}
so after some calculations one gets the following conservative form
\begin{align} 
    \frac{\partial\eta}{\partial t}+\nabla_{\bx}\cdot\left(\int_{z_{b}(\bx)}^{\eta(t,\bx)}\bu(t,\bx,Z)\,\mathrm{d}Z\right) & =0,\label{eq:free_surf_conserv}\\
    \frac{\partial\bu}{\partial t}+\nabla_{\bx}\cdot(\bu\otimes\bu)+\partial_{z}(w\bu)+g\nabla_{\bx}\eta & =-\nabla_{\bx}p^{a}.\label{eq:euler_conserv}
\end{align}
For $d=1$, equations \eqref{eq:free_surf_conserv} and \eqref{eq:euler_3d1} with $w$ given by \eqref{eq:w} are called the Benney equations \cite{Benney-SomePropertiesLong1973}.
\end{remark}

\begin{remark} \label{rem:vorticity}
For $d=1$, the hydrostatic vorticity is defined by $\omega=\partial_{z}u$ and satisfies the equation
\[
    \frac{\partial \omega}{\partial t}+u\partial_{x}\omega+w\partial_{z}\omega = 0,
\]
whereas for $d=2$ it is defined by $\bomega=\partial_{z}\bu^\perp$ and satisfies the equation
\[
   \frac{\partial \bomega}{\partial t}+\bu\cdot\nabla_{\bx}\bomega+w\partial_{z}\bomega-\bomega\cdot\nabla_{\bx}\bu-\left(\nabla\wedge\bu\right)\partial_{z}\bu=0.
\]
\end{remark}

From now on, we assume that $p^{a}$
is a constant unless otherwise stated (as in \cref{subsec:flow_vorticity}).

\subsection{Derivation of the semi-Lagrangian formulation}

The aim is now to reformulate \eqref{eq:euler_3d} using semi-Lagrangian
coordinates. Following \cite{teshukov1995,Zakharov-Benneyequations1981}, let us introduce a new
variable $\lambda\in I=(0,1)$ and the function $\phi=\phi(t,\bx,\lambda)$
solution of the following Cauchy problem in $(0,T)\times \tilde{\Omega}$ where $\tilde{\Omega}=\R^d\times I$:
\begin{equation}\label{eq:def_phi}
    \left\{ 
    \begin{aligned}
        \frac{\partial\phi}{\partial t}+\bu(t,\bx,\phi)\cdot\nabla_{\bx}\phi & =w(t,\bx,\phi) &  & \text{ in } (0,T)\times \tilde{\Omega},\\
        \phi(0,\bx,\lambda) & =\phi_{0}(\bx,\lambda) &  & \text{ in } \tilde{\Omega}.
    \end{aligned}
    \right.
\end{equation}
Integrating the divergence-free condition \eqref{eq:euler_3d}\subeq{4}
from $z_{b}(\bx)$ to $\phi(t,\bx,\lambda)$ and using \eqref{eq:euler_3d}\subeq{2},
the first equation in \eqref{eq:def_phi} can be rewritten in a conservative
form similar to \eqref{eq:free_surf_conserv} 
\begin{equation}\label{eq:phi_conser}
    \frac{\partial\phi}{\partial t}+\nabla_{\bx}\cdot\left(\int_{z_{b}(\bx)}^{\phi(t,\bx,\lambda)}\bu(t,\bx,z)\,\mathrm{d}z\right)=0.
\end{equation}
The initial condition in \eqref{eq:def_phi} is chosen to verify:
\begin{align}\label{eq:cond_phi0}
    \phi_{0}(\bx,0) & =z_{b}(\bx), &
    \phi_{0}(\bx,1) & =\eta_{0}(\bx), & \partial_\lambda\phi_{0}(\bx,\lambda) & >0.
\end{align}
We note that there is not a unique choice of $\phi_{0}$; a canonical
choice is 
\begin{equation}\label{eq:phi0_canonic}
    \phi_{0}(\bx,\lambda)=(1-\lambda)z_{b}(\bx)+\lambda\eta_{0}(\bx).
\end{equation}

Since the equation \eqref{eq:def_phi} is quasilinear, its local in time
well-posedness stated in the following theorem is rather standard and its proof will be postponed
to the end of the section. Below, we denote by $C^s_b$ the space of $C^s$ functions whose all derivatives up to order $s$ are continuous and bounded.
\begin{theorem}
\label{thm:wp}Let $s\geq1$, $T>0$, $p^{a}=0$, and $z_b\in C_{b}^{s}(\R^{d})$.
If $(\eta,\bu,w)$ is a solution
of \eqref{eq:euler_3d} such that $\eta\in C_{b}^{s}((0,T)\times\R^{d})$,
$\bu\in C_{b}^{s}((0,T)\times\Omega_{t})^d$, and $w\in C_{b}^{s}((0,T)\times\Omega_{t})$,
then for $\phi_{0}\in C_{b}^{s}(\tilde{\Omega})$, there exists $T^{*}\in(0,T]$
such that \eqref{eq:def_phi} admits a unique solution $\phi\in C^{s}((0,T^{*})\times\tilde{\Omega})$.

Moreover if $\inf_{(\bx,\lambda)\in\R^{d}\times I}\partial_{\lambda}\phi_{0}(\bx,\lambda)>0$
then for $t\in(0,T^{*})$ we have $\partial_{\lambda}\phi(t,\bx,\lambda)>0$,
so in particular
\begin{align*}
    (\mathrm{Id},\phi(t)):\tilde{\Omega} & \to\Omega_{t}\\
    (\bx,\lambda) & \mapsto(\bx,\phi(t,\bx,\lambda))
\end{align*}
is an orientation preserving $C^{s}$-diffeomorphism, provided it
is a diffeomorphism at $t=0$, \emph{i.e.} provided that $\phi_{0}(\bx,0)=z_{b}(\bx)$
and $\phi_{0}(\bx,1)=\eta_{0}(\bx)$.
\end{theorem}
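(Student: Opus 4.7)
The plan is to solve \eqref{eq:def_phi} by the method of characteristics. The coefficients in front of $\nabla_{\bx}\phi$ and on the right-hand side depend on $\phi$ only through pointwise evaluation (never through $\nabla_{\bx}\phi$), so the characteristic system decouples from $\phi$ and is nothing but the $(d{+}1)$-dimensional Eulerian flow of $(\bu,w)$: for each $(\bx_{0},\lambda)\in\R^{d}\times I$ I solve
\[
    \dot X = \bu(t,X,Z), \qquad \dot Z = w(t,X,Z), \qquad (X,Z)|_{t=0}=(\bx_{0},\phi_{0}(\bx_{0},\lambda)),
\]
and look for $\phi$ characterized by $\phi(t,X(t;\bx_{0},\lambda),\lambda)=Z(t;\bx_{0},\lambda)$. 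The kinematic boundary conditions \eqref{eq:free_surf} and \eqref{eq:bottom} express that the free surface and the bottom are material surfaces, so trajectories starting in $\overline{\Omega_{0}}$ remain in $\overline{\Omega_{t}}$ for $t\in(0,T)$; together with $\bu,w\in C_{b}^{s}$ and Cauchy–Lipschitz with $C^{s}$-dependence on initial data, this yields existence, uniqueness, and $C^{s}$-regularity of $(X,Z)$ with respect to $(\bx_{0},\lambda)$.

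To return from the Lagrangian description to $\phi$, I would invert the spatial component $\bx_{0}\mapsto X(t;\bx_{0},\lambda)$ via the inverse function theorem. Since $X(0;\cdot,\lambda)=\mathrm{Id}$ and $\partial_{\bx_{0}}X$ is $C^{s-1}$-close to the identity for small $t$ \emph{uniformly} in $(\bx_{0},\lambda)$ — this is exactly where the $C_{b}^{s}$ bounds on $\bu,w,\phi_{0}$ are used — there exists $T^{*}\in(0,T]$ and a $C^{s}$-inverse $\Psi(t,\bx,\lambda)$ on $(0,T^{*})$. Setting $\phi(t,\bx,\lambda):=Z(t;\Psi(t,\bx,\lambda),\lambda)$ then defines, by the chain rule, a $C^{s}$ solution of \eqref{eq:def_phi}. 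Uniqueness is automatic: any $C^{1}$ solution must satisfy the same ODE system along its own characteristics, hence must coincide with $Z$ on them. The only real technical point of this first half is the uniformity of $T^{*}$ in $(\bx_{0},\lambda)$, and this is the place where I would expect the bookkeeping to be most delicate.

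For the monotonicity statement, differentiating \eqref{eq:def_phi}\subeq{1} with respect to $\lambda$ shows that $\psi:=\partial_{\lambda}\phi$ obeys the \emph{linear} transport equation
\[
    \frac{\mathrm{d}}{\mathrm{d}t}\psi\bigl(t,X(t),\lambda\bigr) = \bigl(\partial_{z}w-\partial_{z}\bu\cdot\nabla_{\bx}\phi\bigr)\bigl(t,X(t),Z(t)\bigr)\,\psi\bigl(t,X(t),\lambda\bigr)
\]
along the same characteristics, so $\psi$ equals $\partial_{\lambda}\phi_{0}(\bx_{0},\lambda)$ times a strictly positive exponential factor, and $\inf\partial_{\lambda}\phi_{0}>0$ propagates to $\partial_{\lambda}\phi>0$ on $(0,T^{*})$ by a direct Gronwall bound. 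Finally, both $z_{b}(\bx)$ (combining \eqref{eq:bottom} with $\partial_{t}z_{b}=0$) and $\eta(t,\bx)$ (from \eqref{eq:free_surf}) are themselves classical solutions of the characteristic PDE \eqref{eq:def_phi}\subeq{1}, so uniqueness applied to the slices $\lambda=0$ and $\lambda=1$ forces $\phi(t,\bx,0)=z_{b}(\bx)$ and $\phi(t,\bx,1)=\eta(t,\bx)$. Coupled with $\partial_{\lambda}\phi>0$ and the block-triangular Jacobian of $(\mathrm{Id},\phi(t))$ (whose determinant is $\partial_{\lambda}\phi$), this gives the claimed orientation-preserving $C^{s}$-diffeomorphism of $\tilde{\Omega}$ onto $\Omega_{t}$.
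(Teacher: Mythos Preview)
Your argument is correct and follows the same characteristic strategy as the paper, with two minor technical differences worth flagging. First, where you invoke the kinematic conditions to keep trajectories inside $\overline{\Omega_{t}}$, the paper instead extends $\bu,w$ to $C_{b}^{s}$ fields on all of $(0,T)\times\R^{d+1}$ (via a Whitney-type extension) before running Cauchy--Lipschitz; this sidesteps the issue of applying the ODE theorem with a vector field defined only up to a moving boundary and makes the uniform-in-$(\bx_{0},\lambda)$ regularity and $T^{*}$ automatic---precisely the bookkeeping you anticipated as delicate. Second, for the monotonicity of $\phi$ in $\lambda$, the paper simply uses continuity of $\partial_{\lambda}\phi$ (available since $\phi\in C^{s}$, $s\geq1$) to pass from $\partial_{\lambda}\phi_{0}\geq\varepsilon$ to $\partial_{\lambda}\phi\geq\varepsilon/2$ after possibly shrinking $T^{*}$, whereas your Gronwall argument along characteristics is more explicit and in fact gives a quantitative lower bound valid on the whole of $(0,T^{*})$ without further shrinking. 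Both routes are valid; yours is slightly more informative on the second point, the paper's is cleaner on the first.
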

Using this result, we can perform a semi-Lagrangian change
of variable and obtain the following reformulation of the hydrostatic
Euler system.
\begin{theorem}
\label{thm:change}Assuming that the hypotheses of the previous theorem
are satisfied with $s\geq2$, then the functions defined by
\begin{align}
    H(t,\bx,\lambda) & =\partial_{\lambda}\phi(t,\bx,\lambda), & \tilde{\bu}(t,\bx,\lambda) & =\bu(t,\bx,\phi(t,\bx,\lambda))\label{eq:def_H_utilde}
\end{align}
have regularities $H\in C^{s-1}((0,T^{*})\times\tilde{\Omega})$
and $\tilde{\bu}\in C^{s}((0,T^{*})\times\tilde{\Omega})^d$ and are
solutions of
\begin{equation} \label{eq:new_system}
    \left\{ \begin{aligned}\frac{\partial H}{\partial t}+\nabla_{\bx}\cdot(H\tilde{\bu}) & =0 &  & \text{ in }(0,T)\times\tilde{\Omega},\\
    \frac{\partial\tilde{\bu}}{\partial t}+\tilde{\bu}\cdot\nabla_{\bx}\tilde{\bu}+g\nabla_{\bx}\int_{0}^{1}H\,\mathrm{d}\lambda & =-g\nabla_{\bx}z_{b} &  & \text{ in }(0,T)\times\tilde{\Omega},\\
    H(0,\bx,\lambda) & =H_{0}(\bx,\lambda) &  & \text{ in }\tilde{\Omega},\\
    \tilde{\bu}(0,\bx,\lambda) & =\tilde{\bu}_{0}(\bx,\lambda) &  & \text{ in }\tilde{\Omega},
\end{aligned}
\right.
\end{equation}
where the initial data are given by
\begin{align} \label{eq:new_system_init}
    H_{0}(\bx,\lambda) & =\partial_{\lambda}\phi_{0}{\displaystyle (\bx,\lambda),} & \tilde{\bu}_{0}(\bx,\lambda) & =\bu(0,\bx,\phi_{0}(\bx,\lambda)).
\end{align}
\end{theorem}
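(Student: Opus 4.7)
The strategy is a direct chain-rule computation under the substitution $z = \phi(t,\bx,\lambda)$, leveraging \cref{thm:wp} to justify all manipulations. The regularity claims follow at once: $H = \partial_\lambda \phi$ loses one derivative in $\lambda$, so $H \in C^{s-1}((0,T^*)\times\tilde{\Omega})$, while $\tilde{\bu}$ is the composition of a $C^s$ function with a $C^s$ map and hence lies in $C^s((0,T^*)\times\tilde{\Omega})^d$.

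For the mass equation, I would differentiate \eqref{eq:def_phi}\subeq{1} with respect to $\lambda$. With all derivatives of $\bu$ and $w$ evaluated at $(t,\bx,\phi)$, this gives
\[
\partial_t H + \tilde{\bu}\cdot\nabla_{\bx} H + H\bigl[(\partial_z \bu)\cdot\nabla_{\bx}\phi - \partial_z w\bigr] = 0.
\]
Separately, applying $\nabla_{\bx}$ to the identity $\tilde{\bu} = \bu(t,\bx,\phi)$ yields $\nabla_{\bx}\cdot\tilde{\bu} = (\nabla_{\bx}\cdot\bu) + (\partial_z \bu)\cdot\nabla_{\bx}\phi$, and the incompressibility relation \eqref{eq:euler_3d}\subeq{4} turns the bracket above into exactly $\nabla_{\bx}\cdot\tilde{\bu}$. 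Substituting yields \eqref{eq:new_system}\subeq{1}. For the momentum equation, the chain rule gives
\[
\partial_t \tilde{\bu} + \tilde{\bu}\cdot\nabla_{\bx}\tilde{\bu} = \bigl(\partial_t \bu + \bu\cdot\nabla_{\bx}\bu\bigr) + (\partial_z\bu)\bigl[\partial_t\phi + \bu\cdot\nabla_{\bx}\phi\bigr],
\]
in which the bracket collapses to $w(t,\bx,\phi)$ by \eqref{eq:def_phi}\subeq{1}, and then \eqref{eq:euler_3d1} with $p^a = 0$ reduces the right-hand side to $-g\nabla_{\bx}\eta$. Using the representation $\eta(t,\bx) = z_b(\bx) + \int_0^1 H\,\mathrm{d}\lambda$, one recovers \eqref{eq:new_system}\subeq{2}.

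The only non-routine step — what I expect to be the main obstacle — is verifying that the boundary identities $\phi(t,\bx,0) = z_b(\bx)$ and $\phi(t,\bx,1) = \eta(t,\bx)$ persist for all $t \in (0,T^*)$, since the representation of $\eta$ just used relies on them. I would argue by uniqueness: plugging the Ansatz $\phi \equiv z_b$ into \eqref{eq:def_phi}\subeq{1} at $\lambda = 0$ yields precisely the no-penetration condition \eqref{eq:bottom}, and plugging $\phi = \eta$ at $\lambda = 1$ yields the kinematic free-surface condition \eqref{eq:free_surf}. Since \eqref{eq:cond_phi0} ensures these Ansatze also match $\phi_0$ at $t=0$, the uniqueness part of \cref{thm:wp}, applied with $\lambda$ as a frozen parameter, propagates both identities on $(0,T^*)$. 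The initial conditions \eqref{eq:new_system_init} are then immediate by evaluating \eqref{eq:def_H_utilde} at $t=0$, and no further estimate beyond \cref{thm:wp} is required.
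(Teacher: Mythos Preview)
Your proof is correct and follows essentially the same chain-rule computation as the paper: differentiate \eqref{eq:def_phi}\subeq{1} in $\lambda$ and use incompressibility for the mass equation, use the chain rule plus \eqref{eq:def_phi}\subeq{1} again to reduce the transport of $\tilde{\bu}$ to \eqref{eq:euler_3d1}, and finish with $\eta = z_b + \int_0^1 H\,\mathrm{d}\lambda$. The one point worth noting is that what you flag as ``the main obstacle'' --- propagating the boundary identities $\phi(t,\bx,0)=z_b(\bx)$ and $\phi(t,\bx,1)=\eta(t,\bx)$ by uniqueness --- is in fact already established in the proof of \cref{thm:wp} (it is how the image of $(\mathrm{Id},\phi(t))$ is shown to be $\Omega_t$), so in the paper's proof of \cref{thm:change} this is simply cited rather than re-argued; your uniqueness argument is nonetheless the same one used there.
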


We can also go back from the new formulation to the original system.
\begin{theorem}\label{thm:reciproque}
Let $s\geq1$, $T>0$, and $z_{b}\in C_{b}^{s}(\R^{d})$. If $H\in C_{b}^{s}((0,T)\times\tilde{\Omega})$
and $\tilde{\bu}\in C_{b}^{s+1}((0,T)\times\tilde{\Omega})^d$ are solutions
of \eqref{eq:new_system} with $\inf_{(\bx,\lambda)\in\R^{d}\times I}H_{0}(\bx,\lambda)>0$, then 

\begin{align*}
    (\mathrm{Id},\phi(t)):\tilde{\Omega} & \to\Omega_{t}\\
    (\bx,\lambda) & \mapsto(\bx,\phi(t,\bx,\lambda))
\end{align*}
is an orientation preserving $C^{s}$-diffeomorphism for $t\in(0,T)$
where $\phi\in C_{b}^{s}((0,T)\times\tilde{\Omega})$ and $\Omega_{t}$ are defined by
\begin{align*}
    \phi(t,\bx,\lambda) & =z_{b}(\bx)+\int_{0}^{\lambda}H(t,\bx,\lambda')\,\mathrm{d}\lambda',\\
    \Omega_{t} & =\{(\bx,z)\in\R^{d}\times\R\,:\,z_{b}(\bx)<z<\phi(t,\bx,1)\}.
\end{align*}
Moreover, if $(\mathrm{Id},\phi(t)^{-1})$ denotes the inverse of $(\mathrm{Id},\phi(t))$,
then
\begin{align*}
\eta(t,\bx) & =\phi(t,\bx,1),\\
\bu(t,\bx,z) & =\tilde{\bu}(t,\bx,\phi^{-1}(t,\bx,z)),\\
w(t,\bx,z) & =\tilde{\bu}(t,\bx,0)\cdot\nabla_{\bx}z_{b}-\int_{z_{b}(\bx)}^{z}(\nabla_{\bx}\cdot\bu)\,\mathrm{d}Z,
\end{align*}
have regularities $\eta\in C_{b}^{s}((0,T)\times\R^{d})$, $\bu\in C^{s}((0,T)\times\tilde{\Omega})^d$
and $w\in C^{s}((0,T)\times\tilde{\Omega})$ and are solutions
of \eqref{eq:euler_3d} with
\begin{align*}
\eta_{0}(\bx) & =z_{b}(\bx)+\phi(0,\bx,1)\,\\
\bu_{0}(\bx,z) & =\tilde{\bu}_{0}(\bx,\phi^{-1}(0,\bx,z))
\end{align*}
\end{theorem}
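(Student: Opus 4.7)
The plan is to run the argument of \cref{thm:change} in reverse: define $\phi$, $\eta$, $\bu$, $w$ by the stated formulas, check that $(\mathrm{Id},\phi(t))$ is a diffeomorphism, and then verify each equation of \eqref{eq:euler_3d} in turn.

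First I would establish the diffeomorphism property. By construction $\partial_{\lambda}\phi=H$, $\phi(t,\bx,0)=z_{b}(\bx)$, and $\phi(t,\bx,1)=\eta(t,\bx)$; it only remains to propagate positivity of $H$ from the assumption $\inf H_{0}>0$. Rewriting \eqref{eq:new_system}$_1$ in non-conservative form $\partial_{t}H+\tilde{\bu}\cdot\nabla_{\bx}H=-H\,\nabla_{\bx}\!\cdot\!\tilde{\bu}$ and integrating along the characteristics $\dot{\bx}=\tilde{\bu}(t,\bx,\lambda)$ (with $\lambda$ frozen, viewed as a parameter), one obtains $H(t,\bx(t),\lambda)=H_{0}\exp\bigl(-\int_{0}^{t}\nabla_{\bx}\!\cdot\!\tilde{\bu}\,\mathrm{d}s\bigr)$, which is bounded between two positive constants because $\tilde{\bu}\in C_{b}^{s+1}$. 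Hence $\lambda\mapsto\phi(t,\bx,\lambda)$ is strictly increasing, and $(\mathrm{Id},\phi(t))$ is an orientation-preserving $C^{s}$-diffeomorphism onto $\Omega_{t}$; the inverse inherits $C^{s}$ regularity from the implicit function theorem, which in turn gives the claimed regularity of $\bu$, and then of $w$ via its defining integral.

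The central identity to prove is the semi-Lagrangian equation $\partial_{t}\phi+\bu(t,\bx,\phi)\cdot\nabla_{\bx}\phi=w(t,\bx,\phi)$. Differentiating $\phi=z_{b}+\int_{0}^{\lambda}H\,\mathrm{d}\lambda'$ in time and using \eqref{eq:new_system}$_1$ yields $\partial_{t}\phi=-\int_{0}^{\lambda}\nabla_{\bx}\!\cdot\!(H\tilde{\bu})\,\mathrm{d}\lambda'$. Commuting the horizontal divergence with the $\lambda'$-integral and performing the change of variable $z=\phi(t,\bx,\lambda')$ (so $\mathrm{d}z=H\,\mathrm{d}\lambda'$ and $\tilde{\bu}(t,\bx,\lambda')=\bu(t,\bx,z)$) converts $\int_{0}^{\lambda}H\tilde{\bu}\,\mathrm{d}\lambda'$ into $\int_{z_{b}(\bx)}^{\phi(t,\bx,\lambda)}\bu(t,\bx,z)\,\mathrm{d}z$. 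Applying the classical Leibniz rule to the latter produces a boundary term $\bu(t,\bx,\phi)\cdot\nabla_{\bx}\phi$ at the top and a term $-\bu_{b}\cdot\nabla_{\bx}z_{b}$ at the bottom; rearranging and comparing with the definition of $w$ (noting $\tilde{\bu}(t,\bx,0)=\bu_{b}$) yields exactly the desired identity.

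With this in hand, the verification of \eqref{eq:euler_3d} is essentially bookkeeping. The divergence-free condition $\nabla_{\bx}\!\cdot\!\bu+\partial_{z}w=0$ is built into the definition of $w$, and $\bu_{b}\cdot\nabla_{\bx}z_{b}-w_{b}=0$ is immediate since the integral in $w$ vanishes at $z=z_{b}$. Evaluating the semi-Lagrangian identity at $\lambda=1$ gives the kinematic free-surface condition $\partial_{t}\eta+\bu_{s}\cdot\nabla_{\bx}\eta-w_{s}=0$. For the momentum equation, differentiating $\tilde{\bu}(t,\bx,\lambda)=\bu(t,\bx,\phi(t,\bx,\lambda))$ in $t$ and $\bx$ gives $\partial_{t}\tilde{\bu}+\tilde{\bu}\cdot\nabla_{\bx}\tilde{\bu}=\partial_{t}\bu+\bu\cdot\nabla_{\bx}\bu+(\partial_{t}\phi+\bu\cdot\nabla_{\bx}\phi)\partial_{z}\bu$ at $z=\phi$, and substituting the semi-Lagrangian identity together with $\eta=z_{b}+\int_{0}^{1}H\,\mathrm{d}\lambda$ turns \eqref{eq:new_system}$_2$ into \eqref{eq:euler_3d}$_3$.

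The main obstacle is the Leibniz-rule computation in the second paragraph: one must track carefully how the $\lambda'$-integral of a horizontal divergence is converted, via the change of variable $z=\phi$, into a divergence in the original vertical variable, and verify that the boundary contribution at the moving top $\phi$ yields the convective term $\bu(t,\bx,\phi)\cdot\nabla_{\bx}\phi$ while the contribution at the fixed bottom cancels precisely against $\bu_{b}\cdot\nabla_{\bx}z_{b}$ appearing in the definition of $w$. Everything else then reduces to formula manipulations and the inverse function theorem applied to $(\mathrm{Id},\phi(t))$.
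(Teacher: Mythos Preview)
Your proposal is correct and follows essentially the same route as the paper. The only cosmetic difference is that the paper reuses the identity \eqref{eq:equiv_H} from the proof of \cref{thm:change}, namely $\partial_{\lambda}\bigl(\partial_{t}\phi+\tilde{\bu}\cdot\nabla_{\bx}\phi-\tilde{w}\bigr)=\partial_{t}H+\nabla_{\bx}\cdot(H\tilde{\bu})=0$, concluding that the semi-Lagrangian residual is independent of $\lambda$ and vanishes at $\lambda=0$, whereas you obtain the same conclusion by integrating $\partial_{t}H=-\nabla_{\bx}\cdot(H\tilde{\bu})$ in $\lambda$ and converting via the change of variable $z=\phi$ and the Leibniz rule; these are the integral and differential forms of the same computation.
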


\begin{remark}
We note that to go from the original system \eqref{eq:euler_3d}  to the semi-Lagrangian formulation \eqref{eq:new_system} an initial data $\phi_0$ satisfying \eqref{eq:cond_phi0} has to be chosen. In the opposite direction, going from the new formulation \eqref{eq:new_system} to the original Euler system \eqref{eq:euler_3d}, one does not have this degree of freedom.

Note also the relation between $h=\eta-z_b$ and $H$:
\[
    h(\bx,t)= \int_{0}^{1}H(t,\bx,\lambda)\,\mathrm{d}\lambda.
\]
\end{remark}

\begin{remark}
The previous results require the existence of a solution to \eqref{eq:euler_3d}, which is currently an open problem. Even without free-surface, the hydrostatic Euler equations are well-posed only for analytical data \cite{Kukavica2011,Renardy2009}, and it is conceivable to expect similar results in the case of a free surface.
\end{remark}

\begin{remark}
We note that as for the hydrostatic Euler system \eqref{eq:euler_3d}, no local energy balance holds for \eqref{eq:new_system}.
Indeed, after some lengthy but straightforward calculations, one gets
\[
    \frac{\partial}{\partial t}\left(\frac{H|\tilde{\bu}|^{2}}{2}+gz_{b}H\right)+g\int_{0}^{1}H\mathrm{d}\lambda\frac{\partial H}{\partial t}+\nabla_{\bx}\cdot\left(H\tilde{\bu}\left(\frac{|\tilde{\bu}|^{2}}{2}+gz_{b}+g\int_{0}^{1}H\mathrm{d}\lambda\right)\right)=0,
\]
which is not a local energy equality.
However by integrating this equation in $\lambda$ we get the following energy balance:
\[
    \frac{\partial}{\partial t}\left(\int_{0}^{1}H\left(\frac{|\tilde{\bu}|^{2}}{2}+gz_{b}+\frac{g}{2}\int_{0}^{1}H\mathrm{d}\lambda\right)\mathrm{d}\lambda\right)+\nabla_{\bx}\cdot\left(\int_{0}^{1}H\tilde{\bu}\left(\frac{|\tilde{\bu}|^{2}}{2}+gz_{b}+g\int_{0}^{1}H\mathrm{d}\lambda\right)\mathrm{d}\lambda\right)=0.
\]
\end{remark}

Finally, the vorticity defined in \cref{rem:vorticity} can also be expressed in the new domain together with the corresponding vorticity equation.
\begin{proposition} \label{prop:vorticity_new}
For $d=1$, the hydrostatic vorticity $\omega=\partial_z u$ of the flow in the domain $\tilde{\Omega}$ is:
\[
    \tilde{\omega}(t,\bx,\lambda) = \omega(t,\bx,\phi(t,x,\lambda)) = \frac{\partial_\lambda\tilde{u}}{H},
\]
and satisfies the following transport equation:
\[
    \frac{\partial\tilde{\omega}}{\partial t} + \tilde{u}\frac{\partial \tilde{\omega}}{\partial x} = 0.
\]
For $d=2$, the hydrostatic vorticity of the flow in the domain $\tilde{\Omega}$ is:
\[
    \tilde{\bomega}(t,\bx,\lambda) = \bomega(t,\bx,\phi(t,\bx,\lambda)) = \frac{\partial_\lambda\tilde{\bu}^{\perp}}{H},
\]
and satisfies the following equation:
\[
    \frac{\partial \tilde{\bomega}}{\partial t} + \tilde{\bu}\cdot\nabla_{\bx}\tilde{\bomega}-\tilde{\bomega}\cdot\nabla_{\bx}\tilde{\bu}-\frac{\nabla\wedge\tilde{\bu}}{H}\partial_{\lambda}\tilde{\bu}=0.
\]
\end{proposition}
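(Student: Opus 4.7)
The plan is to prove both assertions directly by chain-rule computations, using the defining ODE for $\phi$ (namely $\partial_t\phi+\bu(t,\bx,\phi)\cdot\nabla_{\bx}\phi=w(t,\bx,\phi)$) together with the already-established vorticity equations from \cref{rem:vorticity}. Throughout, a prime-free quantity evaluated at $(t,\bx,\phi(t,\bx,\lambda))$ is implicitly meant; by \cref{thm:wp,thm:change} all derivatives used make sense.

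\textbf{Step 1 (expression of $\tilde{\bomega}$).} Starting from $\tilde{\bu}(t,\bx,\lambda)=\bu(t,\bx,\phi(t,\bx,\lambda))$, differentiate in $\lambda$:
\[
\partial_{\lambda}\tilde{\bu}=(\partial_{z}\bu)(t,\bx,\phi)\,\partial_{\lambda}\phi=H\,(\partial_{z}\bu)(t,\bx,\phi).
\]
For $d=1$ this immediately gives $\tilde{\omega}=\partial_{\lambda}\tilde{u}/H$, and taking $\perp$ in the $d=2$ case yields $\tilde{\bomega}=\partial_{\lambda}\tilde{\bu}^{\perp}/H$.

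\textbf{Step 2 (case $d=1$).} Differentiating $\tilde{\omega}(t,x,\lambda)=\omega(t,x,\phi)$ and using $\partial_{t}\phi+u(t,x,\phi)\,\partial_{x}\phi=w(t,x,\phi)$,
\[
\partial_{t}\tilde{\omega}+\tilde{u}\,\partial_{x}\tilde{\omega}
=\bigl(\partial_{t}\omega+u\,\partial_{x}\omega\bigr)(t,x,\phi)+(\partial_{z}\omega)(t,x,\phi)\bigl(\partial_{t}\phi+u(t,x,\phi)\,\partial_{x}\phi\bigr)
=(\partial_{t}\omega+u\,\partial_{x}\omega+w\,\partial_{z}\omega)(t,x,\phi),
\]
which vanishes by the 1D vorticity equation in \cref{rem:vorticity}. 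Nothing subtle here.

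\textbf{Step 3 (case $d=2$).} The same chain-rule identity, applied componentwise to $\tilde{\bomega}=\bomega(t,\bx,\phi)$, gives
\[
\partial_{t}\tilde{\bomega}+\tilde{\bu}\cdot\nabla_{\bx}\tilde{\bomega}=\bigl(\partial_{t}\bomega+\bu\cdot\nabla_{\bx}\bomega+w\,\partial_{z}\bomega\bigr)(t,\bx,\phi),
\]
so by the 2D equation in \cref{rem:vorticity},
\[
\partial_{t}\tilde{\bomega}+\tilde{\bu}\cdot\nabla_{\bx}\tilde{\bomega}=\bigl(\bomega\cdot\nabla_{\bx}\bu+(\nabla\wedge\bu)\,\partial_{z}\bu\bigr)(t,\bx,\phi).
\]
The remaining task is to rewrite the right-hand side in terms of $\tilde{\bu},\tilde{\bomega},H$. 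From Step~1, $(\partial_{z}\bu)(t,\bx,\phi)=\partial_{\lambda}\tilde{\bu}/H$. Differentiating $\tilde{\bu}=\bu(t,\bx,\phi)$ in $\bx$ yields
\[
\nabla_{\bx}\tilde{\bu}=(\nabla_{\bx}\bu)(t,\bx,\phi)+(\partial_{z}\bu)(t,\bx,\phi)\otimes\nabla_{\bx}\phi,
\]
and applying this to the identities $\bomega\cdot\nabla_{\bx}\bu$ and $\nabla\wedge\bu$ (which both transform with the same correction $(\partial_{z}\bu)\otimes\nabla_{\bx}\phi$) one checks that the two correction terms cancel against each other because $\bomega=\partial_{z}\bu^{\perp}$ is orthogonal to $\partial_{z}\bu$ in exactly the combination that appears. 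The desired identity
\[
\partial_{t}\tilde{\bomega}+\tilde{\bu}\cdot\nabla_{\bx}\tilde{\bomega}-\tilde{\bomega}\cdot\nabla_{\bx}\tilde{\bu}-\frac{\nabla\wedge\tilde{\bu}}{H}\,\partial_{\lambda}\tilde{\bu}=0
\]
then follows.

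The only nontrivial point is the bookkeeping in Step~3: one must carefully verify the cancellation of the two $\nabla_{\bx}\phi$-correction terms in $\bomega\cdot\nabla_{\bx}\bu$ and $(\nabla\wedge\bu)\,\partial_{z}\bu$. Since both corrections produce a term proportional to $(\partial_{z}\bu\otimes\nabla_{\bx}\phi)$ contracted with $\bomega=\partial_{z}\bu^{\perp}$ (respectively $\partial_{z}\bu$), an explicit $2\times 2$ computation (writing $\bu=(u,v)$ and expanding $\nabla\wedge\tilde{\bu}-\nabla\wedge\bu$) closes the argument. This is the main technical obstacle, but it is a finite algebraic verification rather than a deep analytic step.
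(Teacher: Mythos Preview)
Your approach is correct but takes a genuinely different route from the paper. You pull everything back to the original variables via the chain rule and the defining equation $\partial_t\phi+\bu(t,\bx,\phi)\cdot\nabla_{\bx}\phi=w(t,\bx,\phi)$, and then invoke the vorticity equations in $\Omega_t$ from \cref{rem:vorticity}. The paper instead stays entirely in the semi-Lagrangian variables: starting from $\tilde{\omega}=\partial_\lambda\tilde{u}/H$ (resp.\ $\tilde{\bomega}=\partial_\lambda\tilde{\bu}^\perp/H$), it computes the left-hand side of the claimed equation directly and rewrites it as $\frac{1}{H}\partial_\lambda(\cdots)-\frac{\tilde{\omega}}{H}(\cdots)$, where the two parenthesized expressions are exactly the left-hand sides of \eqref{eq:new_system}\subeq{2} and \eqref{eq:new_system}\subeq{1}. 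The paper's route is slightly more economical (no $\nabla_{\bx}\phi$-corrections to track, no appeal to the original vorticity equations) and has the conceptual advantage of showing that the vorticity equation holds for \emph{any} solution of \eqref{eq:new_system}, not only for those arising from a genuine hydrostatic Euler solution via \cref{thm:change}. Your route is more geometric and makes the transport structure transparent, at the cost of the Step~3 bookkeeping. One small remark on that step: the cancellation you describe is not really about $\bomega$ being orthogonal to $\partial_z\bu$; rather, the correction in $\tilde{\bomega}\cdot\nabla_{\bx}\tilde{\bu}$ and the correction in $\frac{\nabla\wedge\tilde{\bu}}{H}\partial_\lambda\tilde{\bu}$ both produce the same term $(\bomega\cdot\nabla_{\bx}\phi)\,\partial_z\bu$ with opposite signs, which is indeed a short explicit computation.
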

\begin{proof}
For $d=1$, an explicit calculation leads to
\[
    \partial_t \tilde{\omega} + \tilde{u}\partial_x\tilde{\omega}
    = \frac{1}{H}\partial_\lambda\left(\partial_{t}\tilde{u}+\tilde{u}\partial_x\tilde{u}+g\partial_x\int_0^1H\mathrm{d}\lambda+g\partial_{x}z_{b}\right) - \frac{\tilde{\omega}}{H}\Bigl(\partial_{t}H+\partial_{x}(H\tilde{u})\Bigr),
\]
whereas for $d=2$ we have
\begin{multline*}
    \partial_{t}\tilde{\bomega}+\tilde{\bu}\cdot\nabla_{\bx}\tilde{\bomega}-\tilde{\bomega}\cdot\nabla_{\bx}\tilde{\bu}-\frac{\nabla\wedge\tilde{\bu}}{H}\partial_{\lambda}\tilde{\bu}\\
    =\frac{1}{H}\partial_{\lambda}\left(\partial_{t}\tilde{\bu}+\tilde{\bu}\cdot\nabla_{\bx}\tilde{\bu}+g\nabla_{\bx}\int_{0}^{1}H\,\mathrm{d}\lambda+g\nabla_{\bx}z_{b}\right)^{\perp}-\frac{\tilde{\bomega}}{H}\left(\partial_{t}H+\nabla\cdot(H\tilde{\bu})\right).
\end{multline*}
Therefore, using the equations of \eqref{eq:new_system}, the end of the proof is straightforward.
\end{proof}

\subsection{Proofs of the equivalence of the two formulations}

We begin by the proof of the local well-posedness for \eqref{eq:def_phi}.

\begin{proof}[Proof of \cref{thm:wp}]
The equation being quasilinear, we use the method of characteristics.
One difficulty is that $\eta(t)$, $\bu(t)$ and $w(t)$ are only
defined on $\Omega_{t}$ and not on $\mathbb{R}^{d+1}$. Hence, we
extend the fields $w(t)$ and $\bu(t)$ defined on the moving domain
$\Omega_{t}$ to $\mathbb{R}^{d+1}$. 
By using \cite[Theorem 1]{fefferman2}, it is possible to define $C^s$ extensions of $\bu$ and $w$ on $(0,T)\times\R^{d+1}$ in a way that their norms remain bounded by a constant depending only on $s$ and $d$.
Therefore from now on we assume that $\bu\in C_{b}^{s}((0,T)\times\R^{d+1})^d$,
and $w\in C_{b}^{s}((0,T)\times\R^{d+1})$.

Let us introduce the characteristics $\boldsymbol{X}(t,\by,v)$ and $\Phi(t,\by,v)$
defined respectively as the solutions of 
\begin{align*}
    \frac{d\boldsymbol{X}(t,\by,v)}{dt} & =\bu(t,\boldsymbol{X}(t,\by,v),\Phi(t,\by,v)), & \frac{d\Phi(t,\by,v)}{dt} & =w(t,\boldsymbol{X}(t,\by,v),\Phi(t,\by,v)),\\
    \boldsymbol{X}(0,\by,v) & =\by, & \Phi(0,\by,v) & =v,
\end{align*}
and we know by the Cauchy-Lipschitz theorem that $\boldsymbol{X}\in C^{s}((0,T)\times\R^{d+1})^d$ and $\Phi\in C^{s}((0,T)\times\R^{d+1})$.\\
If $\phi$ is a solution of \eqref{eq:def_phi}, then
\begin{align*}
    \phi(t,\boldsymbol{X}(t,\by,v),\lambda) & =\Phi(t,\by,v), & \text{with} &  & v & =\phi_{0}(\by,\lambda).
\end{align*}
Since $\boldsymbol{X}(0,\by,\phi_{0}(\by,\lambda))=\by$, the map $\by\mapsto\boldsymbol{X}(t,\by,\phi_{0}(\by,\lambda))$ is
invertible at least on some small time interval $(0,T^{*})$. Therefore, denoting
its inverse by $\boldsymbol{Y}(t,\bx,\lambda)$, the solution of \eqref{eq:def_phi} is given
by
\[
    \phi(t,\bx,\lambda)=\Phi(t,\boldsymbol{Y}(t,\bx,\lambda),\phi_{0}(\boldsymbol{Y}(t,\bx,\lambda),\lambda)),
\]
and by composition of $C^{s}$ functions, we get $\phi\in C^{s}((0,T^{*})\times\tilde{\Omega})$.

If $\inf_{(\bx,\lambda)\in\tilde{\Omega}}\partial_{\lambda}\phi_{0}(\bx,\lambda)>0$,
there exists some $\varepsilon>0$ such that $\partial_{\lambda}\phi_{0}(\bx,\lambda)\geq\varepsilon$ for all $\bx\in\R^{d}$ and $\lambda\in I$.
Then the regularity on $\phi$ ensures
that there exists some $T^{*}\in(0,T]$ such that for $t\in(0,T^{*})$,  $\partial_{\lambda}\phi(t,\bx,\lambda)\geq\frac{\varepsilon}{2}$.
The gradient of the function $(\mathrm{Id},\phi(t))$ is
\[
    \nabla_{\bx,\lambda}
    \begin{pmatrix}\bx\\
        \phi(t,\bx,\lambda)
    \end{pmatrix}
    =
    \begin{pmatrix}
        \boldsymbol{1} & \boldsymbol{0}\\
        \nabla_{\bx}\phi(t,\bx,\lambda) & \partial_{\lambda}\phi(t,\bx,\lambda)
    \end{pmatrix},
\]
which is invertible for $t\in(0,T^{*})$, so $(\mathrm{Id},\phi(t))$
is an orientation preserving diffeomorphism from $\tilde{\Omega}$
to $\Omega_{t}$ defined by
\[
    \Omega_{t}=\{(\bx,z)\in\R^{d}\times\R\,:\,\phi(t,\bx,0)<z<\phi(t,\bx,1)\}.
\]
It remains to show that this definition of $\Omega_{t}$ is equivalent
to the one given by \eqref{eq:Omega_t}. Since $\bu(t,\bx,z_{b}(\bx))=\bu_{b}(t,\bx)$,
and $w(t,\bx,z_{b}(\bx))=w_{b}(t,\bx)$ satisfies \eqref{eq:euler_3d}\subeq{2},
$z_{b}(\bx)$ is a solution of \eqref{eq:def_phi} for $\lambda=0$,
hence by uniqueness $\phi(t,\bx,0)=z_{b}(\bx)$. Similarly, since
$\eta(t,\bx)$ satisfies \eqref{eq:euler_3d}\subeq{1}, $\eta(t,\bx)$
is a solution of \eqref{eq:def_phi} for $\lambda=1$, hence by uniqueness
$\phi(t,\bx,1)=\eta(t,\bx)$.
\end{proof}

Then the transformation of \eqref{eq:euler_3d} to the new system \eqref{eq:new_system} can be easily deduced.

\begin{proof}[Proof of \cref{thm:change}]
The claimed regularities on $H$ and $\tilde{\bu}$ follow from the
regularity on $\phi$ and $\bu$.

We have
\begin{equation}
\begin{aligned}\partial_{\lambda}\left(\partial_{t}\phi+\tilde{\bu}\cdot\nabla_{\bx}\phi-\tilde{w}\right) & =\partial_{t}H+\tilde{\bu}\cdot\nabla_{\bx}H+H\partial_{z}\bu\cdot\nabla_{\bx}\phi-H\partial_{z}w\\
 & =\partial_{t}H+\nabla_{\bx}\cdot(H\tilde{\bu})-H(\nabla_{\bx}\cdot\bu+\partial_{z}w),
\end{aligned}
\label{eq:equiv_H}
\end{equation}
since
\[
\nabla_{\bx}\cdot(H\tilde{\bu})=\tilde{\bu}\cdot\nabla_{\bx}H+H\nabla_{\bx}\cdot\bu+H\partial_{z}\bu\cdot\nabla_{\bx}\phi.
\]
Therefore, using \eqref{eq:def_phi}\subeq{1} and the divergence-free
condition \eqref{eq:euler_3d}\subeq{4}, we deduce the first equation
\eqref{eq:new_system}\subeq{1}.

It follows from \eqref{eq:def_H_utilde} that
\begin{align*}
\partial_{t}\tilde{\bu} & =\partial_{t}\bu+\partial_{t}\phi\,\partial_{z}\bu=\partial_{t}\bu+w\partial_{z}\bu-\bu\cdot\nabla_{\bx}\phi\,\partial_{z}\bu\\
\tilde{\bu}\cdot\nabla_{\bx}\tilde{\bu} & =\bu\cdot\nabla_{\bx}\bu+\bu\cdot\nabla_{\bx}\phi\,\partial_{z}\bu
\end{align*}
and therefore
\begin{equation}
\partial_{t}\tilde{\bu}+\tilde{\bu}\cdot\nabla_{\bx}\tilde{\bu}=\partial_{t}\bu+\bu\cdot\nabla_{\bx}\bu+w\partial_{z}\bu.\label{eq:equiv_u}
\end{equation}
Finally, since $\eta(t,\bx)=\phi(t,\bx,1)$ and $z_{b}(\bx)=\phi(t,\bx,0)$
as seen in \cref{thm:wp}, we have
\[
\eta(t,\bx)=\phi(t,\bx,1)=\phi(t,\bx,0)+\int_{0}^{1}\partial_{\lambda}\phi(t,\bx,\lambda)\,\mathrm{d}\lambda=z_{b}(\bx)+\int_{0}^{1}H(t,\bx,\lambda)\,\mathrm{d}\lambda,
\]
so
\begin{equation}
\nabla_{\bx}\eta=\nabla_{\bx}z_{b}+\nabla_{\bx}\int_{0}^{1}H(t,\bx,\lambda)\,\mathrm{d}\lambda,\label{eq:equiv_eta}
\end{equation}
and the second equation \eqref{eq:new_system}\subeq{2} is deduced using
\eqref{eq:euler_3d}\subeq{3}.
\end{proof}

Finally, the proof of converse is in the same spirit.

\begin{proof}[Proof of \cref{thm:reciproque}]
Equation \eqref{eq:new_system}\subeq{1} can be viewed as a linear transport
equation for $H$ with $\tilde{\bu}$ known,
so can be solved by the method of characteristics.
From the expression of the solution, we directly obtain that for $t\in(0,T)$
\[
 \inf_{(\bx,\lambda)\in\tilde{\Omega}} H(t,\bx,\lambda) \geq \inf_{(\bx,\lambda)\in\tilde{\Omega}} H_0(\bx,\lambda) \exp \left(-t \|\nabla\cdot\tilde{\bu}\|_{L^\infty((0,T)\times\tilde{\Omega})} \right)>0.
\]

Therefore the map $(\mathrm{Id},\phi(t))$ is an orientation preserving $C^{s}$-diffeomorphism
for $t\in(0,T)$. Then, we can define the inverse $\phi(t)^{-1}$
such that $\phi(t)^{-1}(\bx,\phi(t,\bx,\lambda))=\lambda$ and $\phi(t,\bx,\phi^{-1}(t,\bx,z))=z$.
The claimed regularity of $\eta$, $\bu$ and $w$ follows by standard
arguments. It remains to prove that $\eta$, $\bu$ and $w$ are solutions
of \eqref{eq:euler_3d}. One deduces directly that equations  \eqref{eq:euler_3d}\subeq{2,4}
are satisfied. Since $H=\partial_{\lambda}\phi$, using \eqref{eq:equiv_H}
we have
\[
\partial_{\lambda}\left(\partial_{t}\phi+\tilde{\bu}\cdot\nabla_{\bx}\phi-\tilde{w}\right)=\partial_{t}H+\nabla_{\bx}\cdot(H\tilde{\bu})=0,
\]
and therefore
\[
\partial_{t}\eta+\bu_{s}\cdot\nabla_{\bx}\eta-w_{s}=\left.\partial_{t}\phi+\tilde{\bu}\cdot\nabla_{\bx}\phi-\tilde{w}\right|_{\lambda=1}=\left.\partial_{t}\phi+\tilde{\bu}\cdot\nabla_{\bx}\phi-\tilde{w}\right|_{\lambda=0}=0,
\]
which gives \eqref{eq:euler_3d}\subeq{1}.
In view of \eqref{eq:equiv_u}-\eqref{eq:equiv_eta}
we deduce \eqref{eq:euler_3d}\subeq{3} from \eqref{eq:new_system}\subeq{2}.
\end{proof}

\section{Particular solutions}\label{sec:particular_solutions}

In general, solutions of non-linear hyperbolic equations may become discontinuous after a finite critical time at which the space derivative of the transported quantity blows up, see \cite{lefloch2002}. This may occur for the transformation $\phi(t,\bx,\lambda)$ solution of \eqref{eq:def_phi}, in which case  it will eventually evolve towards a discontinuous solution, as we will see just below. Indeed, in this section, we discuss some properties of particular solutions and the link between the two formulations.

More precisely, we will consider first stationary solutions of \eqref{eq:euler_3d} depending only on the vertical variable $z$, then we characterize the stationary solutions of \eqref{eq:new_system} when $d=1$. We also consider the shallow water regime and finally exhibit an explicit solution with vorticity.
These particular solutions allow us to highlight some  properties of the two formulations and the link between them.

\subsection{Stationary solutions of hydrostatic Euler system depending only on \texorpdfstring{$z$}{z}}\label{subsec:stat_sols_z}
In this subsection, we assume $z_b$ to be constant (null for simplicity).
If $(\eta,\bu,w)$ is a stationary solution of \eqref{eq:euler_3d} such that $\bu$ does not depend on $\bx$, then there exist a constant $\eta_0>0$ and a function $\boldsymbol{f}:\mathbb{R}\to\mathbb{R}^d$ such that
\begin{align*}
    \eta(t,\bx) &= \eta_0,&
    \boldsymbol{u}(t,\bx,z) &= \boldsymbol{f}(z),&
    w(t,\bx,z) &= 0.
\end{align*}
In this case, \eqref{eq:def_phi} defining the evolution of $\phi$ reduces to
\begin{equation} \label{eq:phi_case1}
    \left\{ 
    \begin{aligned}
        \frac{\partial\phi}{\partial t}+\boldsymbol{f}(\phi)\cdot\nabla_{\bx}\phi & = 0\\
        \phi(0,\bx,\lambda) & =\phi_{0}(\bx,\lambda).
    \end{aligned}
    \right.
\end{equation}
We assume that $\boldsymbol{f}$ is regular enough.
However, several choices for the initial condition $\phi_0$ are possible as will be discussed below:
\begin{itemize}
    \item Taking the initial condition 
\[
    \phi_0(\bx,\lambda) = \varphi_{0}(\lambda),
\]
with $\varphi_{0}(0) = 0$, $\varphi_{0}(1)=\eta_0$ and $\varphi'_0>0$, the solution of \eqref{eq:phi_case1} remains constant in $t,\bx$ and is given by
\[
    \phi(t,\bx,\lambda) = \varphi_{0}(\lambda).
\]

    \item For a more general example in the one-dimensional case $d=1$, we can consider 
\[
\phi_{0}(x, \lambda) = \lambda (\eta_0+(1-\lambda)a(x)),
\]
for some arbitrary function $a\in C^1_b(\mathbb{R})$. The condition $\partial_\lambda \phi_0>0$ holds provided $\Vert a \Vert_{L^\infty} < \eta_0$. In particular taking $\boldsymbol{f}(\phi)=\phi$ leads to a Burgers equation.
The time $T$ for which a smooth solution $\phi$ exists is given by the method of characteristics, see \cite[Chapter I]{godlewski-book}.
More precisely, since
\[
    \partial_{x}\phi_{0}(x, \lambda) = \lambda(1-\lambda)a^\prime(x),
\]
one has
\[
    \frac{1}{T} = - \lambda(1-\lambda) \inf \bigl(\{0\} \cup a^\prime(\mathbb{R}) \bigr).
\]
This means for each $\lambda$ in $I$ there is a blow-up time $T<\infty$ after which $\phi$ is no longer well-defined if and only if $\inf_\R a^\prime < 0$.
\end{itemize}
This example enlightens the dependence of the change of variable $\phi$ with respect to the choice of the initial data $\phi_0$. 
Besides, this  proves that the local well-posedness established in \cref{thm:wp} cannot be global in general.
It also illustrates the fact that a stationary solution of  \eqref{eq:euler_3d} in the domain $\Omega_t$ does not necessarily correspond to a stationary solution of \eqref{eq:new_system} in the domain $\tilde\Omega$.

\subsection{Stationary solutions of the semi-Lagrangian formulation} \label{subsec:stat_sols}
It is important to note that the original Euler system \eqref{eq:euler_3d} and the semi-Lagrangien formulation \eqref{eq:new_system} do not share the same stationary solutions.
While stationary solutions of \eqref{eq:new_system} are stationary solutions of \eqref{eq:euler_3d}, the converse is not true in general, as enlightened in \cref{subsec:stat_sols_z}.
As seen before, if $(\eta,\bu,w)$ is a stationary solution of \eqref{eq:euler_3d},
then $\phi$ might depend on time according to the choice of the initial data $\phi_{0}$. However, if the equation
\[
    \left\{ \begin{aligned}\bu(\bx,\phi(\bx,\lambda))\cdot\nabla_{\bx}\phi & =w(\bx,\phi(\bx,\lambda))\\
    \phi(\bx,0) & =z_{b}(\bx)\\
    \phi(\bx,1) & =\eta(\bx),
    \end{aligned}
    \right.
\]
has a solution, then \eqref{eq:def_phi} admits a steady state, which
leads to a stationary solution $(H,\tilde{\bu})$ of \eqref{eq:new_system}.
We note that the first equation above is satisfied for $\lambda\in(0,1)$.
This equation being quasilinear, in theory it is possible to solve it for example by starting with some initial data on a manifold of codimension one.
More precisely, the strategy is to take $\phi(\bx,\lambda)=(1-\lambda)z_{b}(\bx)+\lambda\eta(\bx)$
as initial data on a smooth manifold $\mathcal{M}\subset\R^{d}$ of
codimension one such that $\bu(\bx,\phi(\bx,\lambda))\cdot\boldsymbol{n}\neq0$
for $(\bx,\lambda)\in\partial\mathcal{M}\times I$ where $\boldsymbol{n}$
denotes the normal to $\mathcal{M}$. Then by the method of characteristics,
a solution exists provided the characteristics are defined globally.
\medskip

In the remaining of this section we characterize the stationary solutions of the new system \eqref{eq:new_system} in one dimension.
\begin{proposition} \label{prop:steady_new}
For $d=1$, the following properties are equivalent:
\begin{itemize}
    \item $(H,\tilde{u})$ is a stationary solution of \eqref{eq:new_system} with $H>0$ and $\tilde{u}>0$;
    \item there exists three functions $F:\bar{I}\to\R$, $G:\R\to\R$, and $Q:I\to\R$ with $F(\lambda)-G(x)>0$ for all $(x,\lambda)\in\tilde{\Omega}$, such that:
    \begin{align*}
        \tilde{u}(t,x,\lambda) & =\sqrt{F(\lambda)-G(x)} >0,\\
        H(t,x,\lambda) & =\frac{Q(\lambda)}{\sqrt{F(\lambda)-G(x)}}>0,\\
        z_{b}(x) & =\frac{G(x)}{2g}-\int_{0}^{1}\frac{Q(\lambda)}{\sqrt{F(\lambda)-G(x)}}\mathrm{d}\lambda.
    \end{align*}
\end{itemize}
\end{proposition}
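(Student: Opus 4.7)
The plan is to show the two implications of the equivalence. For the forward direction, I would exploit the fact that the two stationary equations coming from \eqref{eq:new_system} in one dimension, namely
\[
    \partial_x(H\tilde{u}) = 0, \qquad \tilde{u}\,\partial_x\tilde{u} + g\,\partial_x\!\left(z_b + \int_0^1 H\,\mathrm{d}\lambda'\right) = 0,
\]
are simply conservation laws in $x$. The first one says that $H(x,\lambda)\tilde{u}(x,\lambda)$ does not depend on $x$, so one defines $Q(\lambda):=H(x,\lambda)\tilde{u}(x,\lambda)$. The second one is equivalent to $\partial_x\!\left(\frac{1}{2}\tilde{u}^2 + g z_b(x) + g\int_0^1 H\,\mathrm{d}\lambda'\right)=0$, a Bernoulli-type invariant, so $\frac{1}{2}\tilde{u}^2 + g z_b + g\int_0^1 H\,\mathrm{d}\lambda'$ also does not depend on $x$; call it $\tfrac{1}{2}F(\lambda)$.

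Once these two constants of motion are identified, I would recover the claimed formulas by setting $G(x):=2g z_b(x) + 2g\int_0^1 H(x,\lambda')\,\mathrm{d}\lambda'$, which depends only on $x$ since neither $z_b$ nor the integral depends on $\lambda$. The Bernoulli invariant then reads $\tilde{u}^2 = F(\lambda) - G(x)$; using $\tilde{u}>0$ gives the square-root formula. The relation $H=Q/\tilde{u}$ then yields the stated expression for $H$, and rearranging the definition of $G$ provides the formula for $z_b$. The positivity assumptions $H>0$, $\tilde{u}>0$ translate directly into $F(\lambda)-G(x)>0$ and $Q(\lambda)>0$ (I would note that the latter positivity follows for free from $H,\tilde{u}>0$).

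For the converse direction, the proof is a direct substitution: given $F$, $G$, $Q$ with $F-G>0$, I would check that $H\tilde{u}=Q(\lambda)$ is independent of $x$ (so the first stationary equation holds), and then compute
\[
    \tilde{u}\,\partial_x\tilde{u} = -\tfrac{1}{2}G'(x), \qquad g\,\partial_x\!\left(z_b + \int_0^1 H\,\mathrm{d}\lambda'\right) = \tfrac{1}{2}G'(x),
\]
where the second identity is precisely the formula given for $z_b$ differentiated in $x$. Adding them gives the second stationary equation. I would also remark that the identity defining $z_b$ is well-posed whenever $F-G>0$ on $\tilde{\Omega}$, and that $\partial_t H=\partial_t \tilde{u}=0$ is automatic since no function depends on $t$.

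The main (and very mild) obstacle is really only bookkeeping: one has to be careful that the quantity $\int_0^1 H\,\mathrm{d}\lambda'$ appearing in the Bernoulli invariant depends on $x$ alone, so that the separation of variables $\tilde{u}^2=F(\lambda)-G(x)$ is consistent. Beyond that, the argument is purely algebraic and uses no information outside the two stationary equations themselves.
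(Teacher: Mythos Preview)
Your proof is correct and follows essentially the same approach as the paper: both identify the two stationary equations as conservation laws in $x$, define $Q(\lambda)=H\tilde u$, $G(x)=2g z_b+2g\int_0^1 H\,\mathrm{d}\lambda$, and $F(\lambda)=\tilde u^2+G(x)$, and then invoke the positivity assumptions to pass to the square-root formulas. The paper's version is more terse, but the content is the same.
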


\begin{proof}
For $d=1$, stationary solutions are characterized by
\begin{align*}
    \partial_{x}(H\tilde{u}) & =0,&
    \partial_{x}\left(\frac{\tilde{u}^{2}}{2}+g\int_{0}^{1}H\mathrm{d}\lambda+gz_{b}\right) & =0,\,
\end{align*}
which is equivalent to the existence of three functions $F,G,Q:I\to\R$ such that:
\begin{align*}
    H\tilde{u} & =Q(\lambda),&
    \tilde{u}^{2}+G(x) & =F(\lambda),
\end{align*}
where
\[
G(x) = 2g\int_{0}^{1}H\mathrm{d}\lambda+2gz_{b}.
\]
The equivalence then follows by using the positivity assumptions.
\end{proof}

The previous result is not very explicit as the topography is given in terms of the functions $F$, $G$, and $Q$. However, when $Q=F'$ this simplifies to a more explicit form, an example of which is shown in \cref{fig:sol_anal_sta}:

\begin{corollary}
\label{cor:sol_anal_sta}
Let $F:\bar{I}\to\R$ and $G:\R\to\R$ be such that $F(\lambda)-G(x)>0$ and $F'(\lambda)>0$ for all $(x,\lambda)\in\tilde{\Omega}$. Then the functions $H$ and $\tilde u$ defined by:
\begin{align*}
    H(t, x, \lambda) & = \frac{F^\prime (\lambda)}{\sqrt{F(\lambda)-G(x)}},\\
\tilde u(t, x, \lambda) & = \sqrt{F(\lambda)-G(x)},
\end{align*}
are stationary solutions of \eqref{eq:new_system}, provided the topography satisfies
\begin{equation} \label{eq:stat_hyp_zb}
    z_b (x) = \frac{G(x)}{2g} - 2\sqrt{F(1)-G(x)} + 2\sqrt{F(0)-G(x)}.
\end{equation}
Moreover the  quantities $h$, $\eta$, $\phi$ can be computed explicitly: 
\begin{align*}
    h &= \int_0^1 H\mathrm{d}\lambda = 2\sqrt{F(1)-G(x)} - 2\sqrt{F(0)-G(x)},\\
    \eta & =z_{b}+h=\frac{G(x)}{2g},\\
    \phi & =z_{b}+\int_{0}^{\lambda}H\mathrm{d}\lambda=\frac{G(x)}{2g}+2\sqrt{F(\lambda)-G(x)}-2\sqrt{F(1)-G(x)}.\\
\end{align*}
\end{corollary}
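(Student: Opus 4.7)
The plan is to apply \cref{prop:steady_new} with the particular choice $Q(\lambda)=F'(\lambda)$ and then carry out an elementary integration. With this choice, the formulas for $\tilde{u}$ and $H$ stated in the corollary coincide exactly with those appearing in the proposition, and the positivity hypotheses $F(\lambda)-G(x)>0$ and $F'(\lambda)>0$ guarantee both that $H>0$ and $\tilde{u}>0$ and that the square roots are well defined on $\tilde{\Omega}$. Hence the only thing left to check in order to conclude that $(H,\tilde{u})$ is a stationary solution of \eqref{eq:new_system} is that the topography $z_b$ defined by \eqref{eq:stat_hyp_zb} matches the third formula of \cref{prop:steady_new}.

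For this I would compute, at fixed $x$, the antiderivative
\[
    \int_0^1 \frac{F'(\lambda)}{\sqrt{F(\lambda)-G(x)}}\,\mathrm{d}\lambda
    = \Bigl[2\sqrt{F(\lambda)-G(x)}\Bigr]_{\lambda=0}^{\lambda=1}
    = 2\sqrt{F(1)-G(x)}-2\sqrt{F(0)-G(x)},
\]
which is valid because $F'$ is exactly the derivative of $F(\lambda)-G(x)$ with respect to $\lambda$. Substituting this into the third line of \cref{prop:steady_new} reproduces \eqref{eq:stat_hyp_zb} verbatim, and the same computation yields the claimed expression for $h=\int_0^1 H\,\mathrm{d}\lambda$.

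The formulas for $\eta$ and $\phi$ then follow by direct substitution. Combining $\eta=z_b+h$ with \eqref{eq:stat_hyp_zb} causes a telescoping cancellation of the two square-root terms, leaving exactly $G(x)/(2g)$. For $\phi$ I would apply the same antiderivative on the interval $(0,\lambda)$ to get $\int_0^\lambda H\,\mathrm{d}\lambda' = 2\sqrt{F(\lambda)-G(x)}-2\sqrt{F(0)-G(x)}$, then add $z_b(x)$ from \eqref{eq:stat_hyp_zb}, and the $2\sqrt{F(0)-G(x)}$ terms cancel, yielding the stated formula. There is no serious obstacle in this proof: once \cref{prop:steady_new} is in hand, everything reduces to evaluating a single explicit antiderivative and performing two cancellations.
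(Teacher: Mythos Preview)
Your proof is correct and follows exactly the same approach as the paper: take $Q=F'$ in \cref{prop:steady_new} and use the antiderivative identity $2\partial_\lambda\sqrt{F(\lambda)-G(x)} = F'(\lambda)/\sqrt{F(\lambda)-G(x)}$ to evaluate all the integrals explicitly. The paper's proof is more terse but the content is identical.
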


\begin{proof}
By taking $Q=F'$ in \cref{prop:steady_new}, the results follow from explicit calculations since
\[
    2\partial_\lambda\left(\sqrt{F(\lambda)-G(x)}\right) = \frac{F^\prime (\lambda)}{\sqrt{F(\lambda)-G(x)}}.
\]
\end{proof}

\begin{remark}
We note that the hypothesis \eqref{eq:stat_hyp_zb} on the bottom topography $z_b$ is not as constrained as it appears at first sight. Indeed, for any $F$ and $z_b$ satisfying:
\[
   \sup z_{b}\leq\sup_{z\leq F(0)}\left(\frac{z}{2g}-2\sqrt{F(1)-z}+2\sqrt{F(0)-z}\right).
\]
there exists a function $G$ satisfying the hypothesis \eqref{eq:stat_hyp_zb}.
In particular a sufficient condition for the existence of the function $G$ is given by:
\[
    \sup z_{b}\leq\frac{F(0)}{2g}-2\sqrt{F(1)-F(0)}.
\]
\end{remark}
\begin{figure}
\centering \includegraphics[width=10cm]{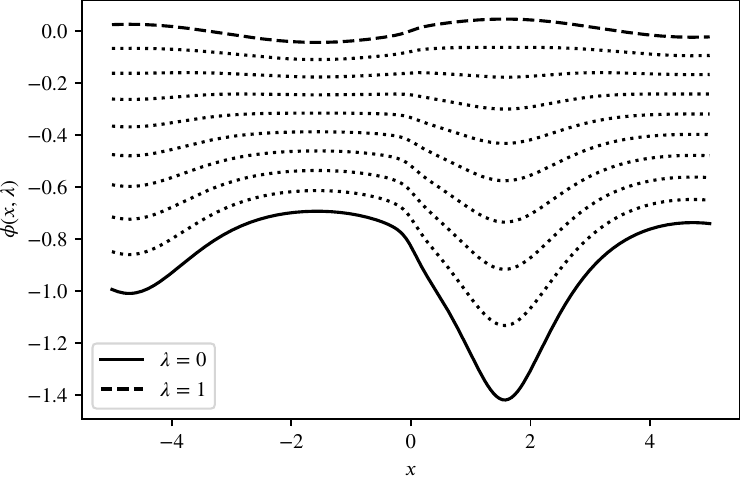}
\caption{Representation of the explicit steady solution given by \cref{cor:sol_anal_sta} for $F(\lambda)=1+\lambda$ and $G(x)=\frac{7}{10}\sin(x) + \frac{2}{10}\tanh(5x)$. The line $\lambda=0$ corresponds to the topography $z_b$, the line $\lambda=1$ to the free surface $\eta$, whereas the intermediate dotted lines correspond to values of $\lambda$ in between.}
\label{fig:sol_anal_sta}
\end{figure}

\subsection{Shallow water flows} \label{subsec:shallow_water}

Assuming that the horizontal velocity $\boldsymbol{u}$ does not depend on the vertical variable $z$, \emph{i.e.},
\[
    \boldsymbol{u}(t,\bx,z)=\boldsymbol{u}(t,\bx),
\]
then \eqref{eq:euler_3d} describes a columnar flow, \emph{i.e.}, without vertical motion, and moreover the horizontal motion is exactly governed by the shallow water system, also called the Saint-Venant equations.
In such a situation, \eqref{eq:def_phi}\subeq{1} becomes
\[
    \frac{\partial\phi}{\partial t}+\nabla_{\bx}\cdot\bigl((\phi-z_b)\bu\bigr) = 0,
\]
as results from the conservative form \eqref{eq:phi_conser}.
Considering the canonical initial condition \eqref{eq:phi0_canonic}
\[
    \phi_0(\bx,\lambda) = \lambda \eta_0(\bx) + (1-\lambda) z_b(\bx),
\]
one can check explicitly using \eqref{eq:ws_div} that the solution of \eqref{eq:def_phi} is given by
\[
    \phi(t,\bx,\lambda) = \lambda \eta(t,\bx) + (1-\lambda) z_b(\bx).
\]
In view of \eqref{eq:def_H_utilde}, $H$ and $\tilde{\bu}$  do not depend on $\lambda$:
\begin{align*}
    H(t,\bx,\lambda) & = \eta(t,\bx) - z_b(\bx), & \tilde{\bu}(t,\bx,\lambda) & = \bu(t,\bx),
\end{align*}
and \eqref{eq:new_system} reduces to the shallow water equations with trivial vertical dependency.
Hence with a suitable initial condition for $\phi$,  \eqref{eq:new_system} and the classical shallow water system share the same solutions.

\subsection{A flow with an horizontal velocity depending on the vertical coordinate} \label{subsec:flow_vorticity}
In this subsection, $p^{a}$
is not supposed constant. The following paragraph exhibits a situation where the solution $\phi$ of \eqref{eq:def_phi} is global but converges asymptotically for large time to a discontinuous function. This shows a situation where the change of variable can hardly be used for numerical implementation.

For $\eta_0\in\R$, the functions $\eta$, $\boldsymbol{u}$, $w$ defined by
\begin{align*}
    \eta(t,\bx) &= \eta_0, &
    \boldsymbol{u}(t,\bx,z) &= \bx\left(z-\frac{\eta_0}{2}\right), &
    w(t,\bx,z) &= z(\eta_0-z),
\end{align*}
are solutions of \eqref{eq:euler_3d} for $z_b=0$ and the atmospheric pressure given by 
\[
    p^a = -|\bx|^2 \frac{\eta_0^2}{8}.
\]
See \cref{fig:example_vorticity_uw} for a graphical representation of the solution $(\bu,w)$.
With the initial condition
\[
    \phi_0(\bx, \lambda) = \lambda \eta_0,
\]
the solution of \eqref{eq:def_phi} is given by
\begin{equation} \label{eq:example_vorticity_phi}
    \phi(t,\bx, \lambda) = \frac{\lambda \eta_0}{(1-\lambda )\mathrm{e}^{-t\eta_0} + \lambda}.
\end{equation}
Hence when $t$ becomes large, $H=\partial_\lambda\phi$ goes to zero when $\lambda\neq0$, $\phi$ evolves towards a discontinuous function asymptotically in time and the change of variable \eqref{eq:def_H_utilde} becomes difficult to implement. This difficulty is presented in \cref{fig:example_vorticity_phi}, which shows the evolution of $\phi$ for $t\in(0,6)$ and $\eta_0=1$. One can clearly see that $\phi$ evolves towards a discontinuous function in infinite time.

\begin{figure}[ht]
    \centering
    \includegraphics{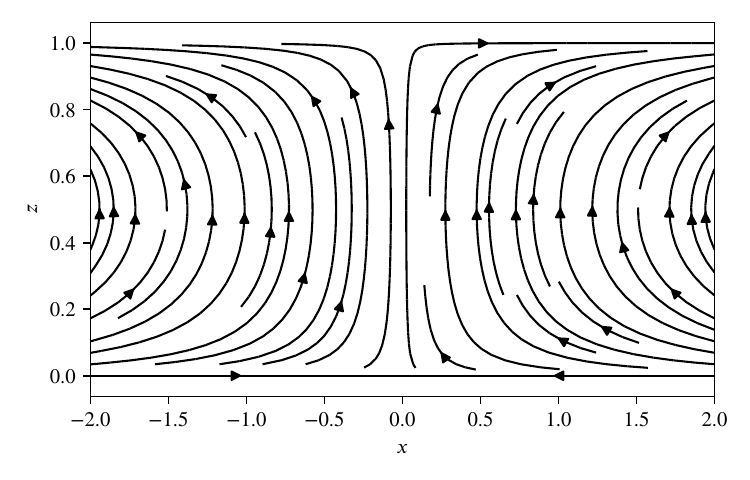}
    \caption{Explicit flow with vorticity proposed in \cref{subsec:flow_vorticity} with $\eta_0=1$.}
    \label{fig:example_vorticity_uw}
\end{figure}

\begin{figure}[ht]
    \centering
    \includegraphics{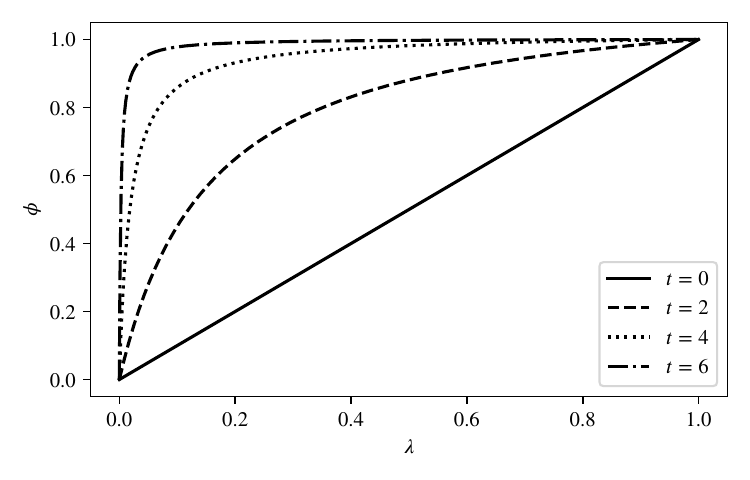}
    \caption{Graph of $\phi$ defined by \eqref{eq:example_vorticity_phi} for $\eta_0=1$ at times $t=0,2,4,6$.}
    \label{fig:example_vorticity_phi}
\end{figure}

\section{Spectrum and Riemann invariants}\label{sec:spectrum_riemann}

The semi-Lagrangian formulation \eqref{eq:new_system} has a chance for having an hyperbolic structure so it is quite natural to try to determine the eigenvalues and Riemann invariants of the associated operator. Due to the presence of the integral in $\lambda$ this is not a standard hyperbolic system, the matrix is replaced by an operator acting on functions of $\lambda$.

For simplicity we first consider the case $z_{b}=0$. The case of non-zero topography will be treated in \cref{subsec:avectopo}. For $d=2$, the system \eqref{eq:new_system} can be rewritten as
\begin{equation} \label{eq:new_system_vector}
    \left\{
    \begin{aligned}
    & \frac{\partial\boldsymbol{U}}{\partial t}+A_{1}(\boldsymbol{U})\frac{\partial\boldsymbol{U}}{\partial x}+A_{2}(\boldsymbol{U})\frac{\partial\boldsymbol{U}}{\partial y} = 0\\
    & \boldsymbol{U}(0) = (H_0, \tilde{u}_0, \tilde{v}_0)^T
    \end{aligned}
    \right.
\end{equation}
where $\boldsymbol{U}=(H,\tilde{\boldsymbol{u}})^{T}=(H,\tilde{u},\tilde{v})^T$ and
\begin{align}\label{def:mat_A}
    A_{1}(\boldsymbol{U}) & =\begin{pmatrix}\tilde{u} & H & 0\\
    g\int_{0}^{1}\cdot \,\mathrm{d}\lambda & \tilde{u} & 0\\
    0 & 0 & \tilde{u}
    \end{pmatrix}, & A_{2}(\boldsymbol{U}) & =\begin{pmatrix}\tilde{v} & 0 & H\\
    0 & \tilde{v} & 0\\
    g\int_{0}^{1}\cdot \,\mathrm{d}\lambda & 0 & \tilde{v}
    \end{pmatrix},
\end{align}
are non-local operators acting on functions in the variable $\lambda$.
The first aim is to study the hyperbolicity of \eqref{eq:new_system_vector}. Hence, we may consider that the variables $(t,\bx)$ are non-essential for the study of hyperbolicity of the operators $A_{1}(\boldsymbol{U})$ and $A_{2}(\boldsymbol{U})$ so we will often not write the dependence on $(t,\bx)$ explicitly for the sake of simplicity.
Let $\boldsymbol{\xi}=(\xi_{1},\xi_{2})\in\mathbb{R}^{2}$ such that
$\Vert\boldsymbol{\xi}\Vert=1$ and consider a linear combination
of the operators $A_{1}(\boldsymbol{U})$ and $A_{2}(\boldsymbol{U})$ given by \eqref{def:mat_A}: 
\begin{align*}
    A_{\boldsymbol{\xi}}(\boldsymbol{U}) &:=  \xi_{1}A_{1}(\boldsymbol{U})+\xi_{2}A_{2}(\boldsymbol{U}).
\end{align*}
System \eqref{eq:new_system_vector} is invariant by the rotation along the vertical axis, see~\cite[Chapter 5]{godlewski-book}.
By defining the following rotation matrix
\[
    R=\begin{pmatrix}
        1 & 0 & 0\\
        0 & \cos\theta & -\sin\theta\\
        0 & \sin\theta & \cos\theta
    \end{pmatrix},
\]
we have
\[
    R^{-1}A_{\boldsymbol{\xi}}(\boldsymbol{U})R=\begin{pmatrix}
        \boldsymbol{\xi}\cdot\tilde{\boldsymbol{u}} & H & 0\\
        g\int_{0}^{1}\cdot d\lambda & \boldsymbol{\xi}\cdot\tilde{\boldsymbol{u}} & 0\\
        0 & 0 & \boldsymbol{\xi}\cdot\tilde{\boldsymbol{u}}
    \end{pmatrix},
\]
with $\xi_1= \cos\theta, \xi_2=\sin\theta$. Hence the operators $R^{-1}A_{\boldsymbol{\xi}}(\boldsymbol{U})R$ and $A_{\boldsymbol{\xi}}(\boldsymbol{U})$ share the same spectrum.
Note that $R^{-1}A_{\boldsymbol{\xi}}(\boldsymbol{U})R=A_1(\boldsymbol{U}_{\boldsymbol{\xi}})$ where  $\boldsymbol{U}_{\boldsymbol{\xi}}=(H,\boldsymbol{\xi}\cdot\tilde{\boldsymbol{u}})^{T}$.
Moreover in view of the structure of $R^{-1}A_{\boldsymbol{\xi}}(\boldsymbol{U})R$, the third variable $\tilde v$ is decoupled from the first two variables, which motivates the study of the spectrum of the $2\times 2$ matrix $A_0(\tilde{\boldsymbol{U}})$ corresponding to the case $d=1$ given by
\[
    A_{0}(\tilde{\boldsymbol{U}}) =\begin{pmatrix}
        \tilde{u} & H\\
        g\int_{0}^{1}\cdot\,\mathrm{d}\lambda & \tilde{u}
    \end{pmatrix},
    \quad\text{where}\quad
    \tilde{\boldsymbol{U}} =\begin{pmatrix}
        H\\
    \tilde{u}
    \end{pmatrix}.
\]

Provided that $\tilde{u}\in L^{\infty}(I)$
and $H\in L^{\infty}(I)$, $A_0(\tilde{\boldsymbol{U}}):L^{2}(I)^{2}\to L^{2}(I)^{2}$ is a bounded operator. We  can  rewrite \eqref{eq:new_system} when $d=1$ in the  following form
\begin{equation}\label{eq:new_system_vector_1d}
    \frac{\partial\tilde{\boldsymbol{U}}}{\partial t}+A_{0}(\tilde{\boldsymbol{U}})\frac{\partial\tilde{\boldsymbol{U}}}{\partial x}=0,
\end{equation}
and the aim of the next section is to study the spectrum of the operator $A_0$.

\subsection{Characterization of the spectrum} \label{subsec:spectrum}
For a linear operator on an infinite dimensional space, say $A_0(\tilde{\boldsymbol{U}}):L^{2}(I)^{2}\to L^{2}(I)^{2}$,  the notion of eigenvalues becomes more inclusive and an analysis of the spectrum is essential. To simplify the notation, and as $\tilde{\boldsymbol{U}}$ is fixed, we
omit the argument $\tilde{\boldsymbol{U}}$ in $A_{0}(\tilde{\boldsymbol{U}})$
and simply use the notation $A_{0}$, except in certain situations where there is ambiguity. The spectrum of $A_0$ is defined as the set $\sigma(A_0)$ of all $c\in\mathbb{C}$ for which the operator $A_0-c\mathbf{I}$ is not invertible. We will use some standard definitions related to the decomposition of the spectrum (for the reader's convenience, these definitions are recalled in \cref{sec:appendix}).

\medskip
We denote by $C^{0,\alpha}(I)$ the space of Hölder continuous functions of index $\alpha$ over $I$, \emph{i.e.}
the set of functions $u:I\to\R$ such that for all $x,y\in I$, $|u(x)-u(y)| < K |x-y|^\alpha$ for some fixed constant $K>0$, called the Hölder constant.
The authors in \cite{teshukov1985} and \cite{teshukov1995} derived a formula satisfied by the eigenvalues of $A_0$ given by equation \eqref{eq:condition-eigenvalues} below. However, no regularity assumptions had been imposed and the computation of eigenelements was performed without precision on the regularity of the variables $H$ and  $\tilde{u}$. The following theorem is an extension of the results given in \cite{teshukov1985} into a complete characterization of the spectrum of the operator $A_0$ precisely when the velocity $\tilde{u}$ is $\frac{1}{4}$-Hölder continuous over $I$. This regularity for $\tilde{u}$ turns out to be the limiting case for the existence of at least two real and distinct eigenvalues outside the range of values of the velocity $\tilde{u}$ as presented in \cref{prop:limit_case}.

Let us state our main result on the characterization of the spectrum of $A_0$.

\begin{theorem}\label{thm:spectrum}

If $\tilde{u},H\in L^{\infty}(I)$ with $\inf_{I}H>0$, then the spectrum of $A_{0}:L^{2}(I)^{2}\to L^{2}(I)^{2}$
is
\[
\sigma(A_{0})=\tilde{u}[I]\cup\left\{ c\in\mathbb{C}\setminus\tilde{u}[I] : \int_{0}^{1}\frac{gH}{(c-\tilde{u})^{2}}\mathrm{d}\lambda=1\right\} ,
\]
with the following characterization
\begin{align*}
\sigma_{p}(A_{0}) & =\left\{ c\in\mathbb{C} : (c-\tilde{u})^{-1}\in L^{4}(I)\:\text{and}\int_{0}^{1}\frac{gH}{(c-\tilde{u})^{2}}\mathrm{d}\lambda=1\right\} \cup\left\{ c\in\tilde{u}[I] : \meas\bigl(\tilde{u}^{-1}(c)\bigr)>0\right\} ,\\
\sigma_{c}(A_{0}) & = \tilde{u}[I] \setminus \sigma_{p}(A_{0}) ,\\
\sigma_{r}(A_{0}) & =\emptyset,
\end{align*}
where the definitions of the point spectrum $\sigma_{p}(A_{0})$,
continuous spectrum $\sigma_{c}(A_{0})$, residual spectrum $\sigma_{r}(A_{0})$,
and essential range $\tilde{u}[I]$ are given in \cref{sec:appendix}.

In particular if $\tilde{u}\in C^{0,1/4}(\bar{I})$, then the essential
range $\tilde{u}[I]$ coincides with the image $\tilde{u}(\bar{I})$
and for $c\in\tilde{u}(\bar{I})$, $(c-\tilde{u})^{-1}\notin L^{4}(I)$,
so
\begin{align*}
\sigma_{p}(A_{0}) & =\left\{ c\in\mathbb{C}\setminus\tilde{u}(\bar{I}) : \int_{0}^{1}\frac{gH}{(c-\tilde{u})^{2}}\mathrm{d}\lambda=1\right\} \cup\left\{ c\in\tilde{u}(\bar{I})\,:\:\meas\bigl(\tilde{u}^{-1}(c)\bigr)>0\right\} ,\\
\sigma_{c}(A_{0}) & =\left\{ c\in\tilde{u}(\bar{I})\,:\:\meas\bigl(\tilde{u}^{-1}(c)\bigr)=0\right\} ,\\
\sigma_{r}(A_{0}) & =\emptyset.
\end{align*}

\end{theorem}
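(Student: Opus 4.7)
The plan is to analyze the resolvent $(A_0-c\mathbf{I})^{-1}$ by Schur-complement reduction in order to locate the full spectrum, then identify the point spectrum directly from the eigenvalue equation, and finally rule out the residual spectrum by an adjoint-symmetry argument.

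First, given $(f_1,f_2)\in L^2(I)^2$, I would attempt to solve $(A_0-c\mathbf{I})\binom{h}{v}=\binom{f_1}{f_2}$, \emph{i.e.} $(\tilde{u}-c)h+Hv=f_1$ and $g\int_0^1 h\,\mathrm{d}\lambda+(\tilde{u}-c)v=f_2$. When $c\notin\tilde{u}[I]$, the multiplication operator $M_{\tilde{u}-c}$ is boundedly invertible on $L^2(I)$; eliminating $h$ reduces the system to $(\tilde{u}-c)v=f_2-S$ with $S=g\int_0^1(f_1-Hv)/(\tilde{u}-c)\,\mathrm{d}\lambda$, and substituting the resulting $v=(f_2-S)/(\tilde{u}-c)$ back into $S$ yields a \emph{scalar} equation for $S$ whose coefficient is exactly $1-g\int_0^1 H/(\tilde{u}-c)^2\,\mathrm{d}\lambda$. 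This establishes that $A_0-c\mathbf{I}$ is boundedly invertible precisely when $c\notin\tilde{u}[I]$ and $\int_0^1 gH/(\tilde{u}-c)^2\,\mathrm{d}\lambda\neq1$. Conversely, to show that every $c\in\tilde{u}[I]$ lies in $\sigma(A_0)$, I would build a Weyl sequence $(\varphi_n,0)$ by choosing $\varphi_n\in L^2(I)$ supported on $\tilde{u}^{-1}((c-1/n,c+1/n))$, of unit $L^2$-norm and with $\int\varphi_n\,\mathrm{d}\lambda=0$ (obtained by splitting the level-set neighborhood into two halves of equal positive measure).

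Second, I would treat the eigenvalue equation $(\tilde{u}-c)h+Hv=0$ and $g\int h\,\mathrm{d}\lambda+(\tilde{u}-c)v=0$ by splitting two cases. If $v=0$, then $h$ must be supported on $\tilde{u}^{-1}(c)$ and satisfy $\int h\,\mathrm{d}\lambda=0$; a nontrivial such $h$ exists iff $\meas(\tilde{u}^{-1}(c))>0$. If $v\neq 0$, writing $h=gHK/(c-\tilde{u})^2$ with $K=\int h\,\mathrm{d}\lambda$ and normalizing $K=1$ yields the compatibility condition $\int_0^1 gH/(c-\tilde{u})^2\,\mathrm{d}\lambda=1$; for this $h$ to lie in $L^2(I)$ (and hence $v=g/(c-\tilde{u})$ as well), one needs $H/(c-\tilde{u})^2\in L^2(I)$, which since $H$ is bounded above and below by positive constants is equivalent to $(c-\tilde{u})^{-1}\in L^4(I)$. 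This gives exactly the announced description of $\sigma_p(A_0)$.

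Third, a direct computation identifies the adjoint as $A_0^*=\bigl(\begin{smallmatrix}M_{\tilde{u}} & gP\\ M_H & M_{\tilde{u}}\end{smallmatrix}\bigr)$ with $Pf=\int_0^1 f\,\mathrm{d}\lambda$. Running the same eigenvalue analysis for $A_0^*$ at $\bar{c}$ produces conditions that are the complex conjugates of those for $A_0$ at $c$; since $\tilde{u}$ and $H$ are real-valued, this gives $c\in\sigma_p(A_0)\iff\bar{c}\in\sigma_p(A_0^*)$. Combined with the general identity $\sigma_r(A_0)=\{c\notin\sigma_p(A_0):\bar{c}\in\sigma_p(A_0^*)\}$, this yields $\sigma_r(A_0)=\emptyset$. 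The residual points thus being ruled out, one concludes $\sigma_c(A_0)=\sigma(A_0)\setminus\sigma_p(A_0)=\tilde{u}[I]\setminus\sigma_p(A_0)$.

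Finally, suppose $\tilde{u}\in C^{0,1/4}(\bar{I})$. Continuity gives $\tilde{u}[I]=\tilde{u}(\bar{I})$ (the image of a compact set by a continuous function is closed, and every image point $c=\tilde{u}(x_0)$ is an essential value by continuity at $x_0$). If $c\in\tilde{u}(\bar{I})$ with $\tilde{u}(x_0)=c$, the Hölder estimate $|\tilde{u}(\lambda)-c|\leq K|\lambda-x_0|^{1/4}$ forces $\int_0^1|c-\tilde{u}|^{-4}\,\mathrm{d}\lambda\gtrsim\int|\lambda-x_0|^{-1}\,\mathrm{d}\lambda=+\infty$, so $(c-\tilde{u})^{-1}\notin L^4(I)$; hence the first part of $\sigma_p(A_0)$ cannot contain any element of $\tilde{u}(\bar{I})$, and the simplified characterization follows. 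The main obstacle throughout is the careful bookkeeping of the two disjoint mechanisms producing eigenvectors (supported on positive-measure level sets versus given by the Cauchy-type formula $gH/(c-\tilde{u})^2$) together with the $L^4$-summability threshold; once the Schur-complement resolvent is written explicitly, each piece of the decomposition falls out cleanly.
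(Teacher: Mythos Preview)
Your proof is correct and takes a genuinely different route from the paper's on two of the three main steps. The paper organizes the argument as (1) injectivity, (2) surjectivity, (3) density of the range; your steps (1) and the paper's step (1) coincide, but you replace (2) and (3) by cleaner abstract arguments.

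For the inclusion $\tilde{u}[I]\subset\sigma(A_0)$, the paper explicitly exhibits data $\boldsymbol{b}$ for which $(A_0-c\mathbf{I})\bphi=\boldsymbol{b}$ has no $L^2$ solution, distinguishing whether $(c-\tilde{u})^{-1}$ lies in $L^4$ or not; your Weyl-sequence argument with $(\varphi_n,0)$ supported on $\tilde{u}^{-1}((c-1/n,c+1/n))$ and mean zero gives the same inclusion in one line. For the emptiness of the residual spectrum, the paper proves directly that the range of $A_0-c\mathbf{I}$ is dense whenever the operator is injective, by constructing explicit approximating sequences $\bphi_n$ in \emph{three} separate sub-cases according to whether $(c-\tilde{u})^{-1}$ lies in $L^4$, in $L^2\setminus L^4$, or not in $L^2$. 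You bypass this entirely by computing $A_0^*$, observing that the eigenvalue conditions for $A_0^*$ at $\bar{c}$ are the complex conjugates of those for $A_0$ at $c$ (because $\tilde{u}$ and $H$ are real-valued), and invoking the general identity $\sigma_r(A_0)=\{c\notin\sigma_p(A_0):\bar{c}\in\sigma_p(A_0^*)\}$. This is substantially shorter and conceptually cleaner; what the paper's more laborious constructive approach buys is an explicit description of how to approximate a given right-hand side, which could be useful in a quantitative or numerical context but is not needed for the bare spectral classification.
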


\begin{proof}
The proof is divided into three steps: injectivity, surjectivity, and the characterization of the range.
The particular case with Hölder condition is treated in a fourth point.
For the convenience of the reader, we provide in \cref{fig:spectrum_sets}
a representation of the different conditions involved in the proof.

\begin{figure}
\centering
\begin{tikzpicture}[scale=0.95]
	\fill[red!50] (-4.6,-3.5) rectangle (4.6,3.5);
    \draw[rounded corners, fill=black!20] (-4.4,-2.8) rectangle (4.4,2.8);
	\draw (-4.6,3.5) node[below right] {$\meas\bigl(\tilde{u}^{-1}(c)\bigr)>0$};
	\draw[fill=black!50] (0,0) ellipse (4 and 2.5);
	\draw[pattern=north east lines] (-1,0) circle (2);
	\draw[pattern=north west lines] (1,0) circle (2);
	\draw[pattern=vertical lines] (-1,0) circle (1);

	\begin{scope}
	    \clip (1,0) circle (2);
	   \fill[red, opacity=0.5] (-1,0) circle (2);
	\end{scope}

	\begin{scope}
		\clip (-1,0) circle (1);
		\fill[blue, opacity=0.5, even odd rule] (-1,0) circle (1) (1,0) circle (2);
	\end{scope}

	\draw[fill=black!20] (4.9,3.3) rectangle (5.5,2.7);
	\draw (5.5,3) node[right] {$\meas\bigl(\tilde{u}^{-1}(c)\bigr)=0$};

	\draw[fill=black!50] (4.9,2.3) rectangle (5.5,1.7);
	\draw (5.5,2) node[right] {$(c-\tilde{u})^{-1}\in L^2(I)$};

	\draw[pattern=north east lines] (4.9,1.3) rectangle (5.5,0.7);
	\draw (5.5,1) node[right] {$(c-\tilde{u})^{-1}\in L^4(I)$};

	\draw[pattern=vertical lines] (4.9,-0.3) rectangle (5.5,0.3);
	\draw (5.5,0) node[right] {$(c-\tilde{u})^{-1}\in L^\infty(I)$ \emph{i.e.} $c\notin\tilde{u}[I]$};

	\draw[pattern=north west lines] (4.9,-1.3) rectangle (5.5,-0.7);
	\draw (5.5,-1) node[right] {$\displaystyle\int_{0}^{1}\frac{gH}{(c-\tilde{u})^{2}}\mathrm{d}\lambda=1$};

	\draw[fill=red!50] (4.9,-1.7) rectangle (5.5,-2.3);
	\draw (5.5,-2) node[right] {$A_{0}-c\mathbf{I}$ not injective \emph{i.e.} $c\in\sigma_{p}(A_{0})$};

	\draw[fill=blue, opacity=0.5] (4.9,-2.7) rectangle (5.5,-3.3);
	\draw (5.5,-3) node[right] {$A_{0}-c\mathbf{I}$ bijective \emph{i.e.} $c\notin\sigma(A_{0})$};
\end{tikzpicture}
\caption{Representation of the various conditions on $c$ used in the proof of \cref{thm:spectrum}.}
\label{fig:spectrum_sets}
\end{figure}

\begin{enumerate}
\item \emph{Injectivity:} The first step is to determine when $A_{0}-c\mathbf{I}$
is not injective, \emph{i.e.} we determine the values  $c$ such
that there exists a non-zero solution $\bphi=(\phi_{1},\phi_{2})\in L^{2}(I)^{2}$
of 
\[
(A_{0}-c\mathbf{I})\,\bphi=0,
\]
or explicitly 
\begin{align}
H\phi_{2} & =(c-\tilde{u})\phi_{1},\label{eq:sys_eigen_sanstopo1}\\
g\int_{0}^{1}\phi_{1}\,\mathrm{d}\lambda & =(c-\tilde{u})\phi_{2},\label{eq:sys_eigen_sanstopo2}
\end{align}
which yields in particular
\[
gH\int_{0}^{1}\phi_{1}\,\mathrm{d}\lambda=(c-\tilde{u})H\phi_{2}=(c-\tilde{u})^{2}\phi_{1}.
\]
One has to distinguish the cases when the inverse image $\tilde{u}^{-1}(c)$  of $\{c\}$ has zero or nonzero measure.
\begin{enumerate}
\item If $\tilde{u}^{-1}(c)$ has nonzero measure, then there exists a nontrivial
function $\phi_{1}\in L^{2}(I)$ with $\phi_{1}=0$ on $I\setminus\tilde{u}^{-1}(c)$
and such that
\[
\int_{0}^{1}\phi_{1}\,\mathrm{d}\lambda=\int_{\tilde{u}^{-1}(c)}\phi_{1}\,\mathrm{d}\lambda=0.
\]
One can check easily that such a $\phi_{1}$ is a solution of \eqref{eq:sys_eigen_sanstopo1}
and \eqref{eq:sys_eigen_sanstopo2} together with $\phi_{2}=0$. Thus
if $\tilde{u}^{-1}(c)$ has nonzero measure, then $A_{0}-c\mathbf{I}$ is not injective, and these values $c$ are in the point spectrum $\sigma_{p}(A_{0})$.
\item If $\tilde{u}^{-1}(c)$ has zero measure, the solution of \eqref{eq:sys_eigen_sanstopo1}-\eqref{eq:sys_eigen_sanstopo2} is given by
\begin{align}
\phi_{1} & =\frac{gH}{(c-\tilde{u})^{2}}\int_{0}^{1}\phi_{1}\,\mathrm{d}\lambda,\label{eq:sol_phi_sanstopo1}\\
\phi_{2} & =\frac{g}{c-\tilde{u}}\int_{0}^{1}\phi_{1}\,\mathrm{d}\lambda,\label{eq:sol_phi_sanstopo2}
\end{align}
and it remains to check whether $\bphi=(\phi_{1},\phi_{2})$ is in
$L^{2}(I)^{2}$.

Obviously $\bphi$ is trivial if and only if $\int_{0}^{1}\phi_{1}\,\mathrm{d}\lambda=0$,
so without lost of generality $\phi_{1}$ can be normalized such that
$\int_{0}^{1}\phi_{1}\,\mathrm{d}\lambda=1$. Then equations \eqref{eq:sol_phi_sanstopo1}-\eqref{eq:sol_phi_sanstopo2} reduce to 
\begin{align}
\phi_{1} & =\frac{gH}{(c-\tilde{u})^{2}},\label{eq:redsol_phi_sanstopo1}\\
\phi_{2} & =\frac{g}{c-\tilde{u}}.\label{eq:redsol_phi_sanstopo2}
\end{align}
We now have to distinguish the cases $(c-\tilde{u})^{-1}\notin L^{4}(I)$
and $(c-\tilde{u})^{-1}\in L^{4}(I)$:
\begin{itemize}
\item If $(c-\tilde{u})^{-1}\notin L^{4}(I)$, then $(c-\tilde{u})^{-2}\notin L^{2}(I)$
and there is no non-trivial solution for $\bphi$. Thus if $\tilde{u}^{-1}(c)$
has zero measure and $(c-\tilde{u})^{-1}\notin L^{4}(I)$, then $A_{0}-c\mathbf{I}$
is injective, so such values $c$ are not in the point spectrum $\sigma_{p}(A_{0})$.

\item On the other hand, if $(c-\tilde{u})^{-1}\in L^{4}(I)$, then $(c-\tilde{u})^{-2}\in L^{2}(I)$
and $(c-\tilde{u})^{-1}\in L^{2}(I)$. Hence,
equations \eqref{eq:sys_eigen_sanstopo1} and \eqref{eq:sys_eigen_sanstopo2}
admit a solution $\bphi\in L^{2}(I)^{2}$. As explained
above the solution $\bphi$ is not trivial as long as $\int_{0}^{1}\phi_{1}\,\mathrm{d}\lambda\neq0$,
so integrating \eqref{eq:sol_phi_sanstopo1} on $\lambda$ implies
the following characterization for having a non-trivial solution $\bphi$\,:
\begin{equation}
\int_{0}^{1}\frac{gH}{(c-\tilde{u})^{2}}\mathrm{d}\lambda=1.\label{eq:condition-eigenvalues}
\end{equation}
\end{itemize}
\end{enumerate}
This terminates the characterization of the injectivity of $A_{0}-c\mathbf{I}$,
hence the claimed characterization of the point-spectrum.
\item \emph{Surjectivity:} One has to determine the range of $A_{0}-c\mathbf{I}$,
\emph{i.e.} determining for which $\boldsymbol{b}=(b_{1},b_{2})\in L^2(I)^2$ the system 
\begin{align*}
H\phi_{2} & =(c-\tilde{u})\phi_{1}+b_{1},\\
g\int_{0}^{1}\phi_{1}\,\mathrm{d}\lambda & =(c-\tilde{u})\phi_{2}+b_{2},
\end{align*}
has a solution $\bphi=(\phi_{1},\phi_{2})\in L^{2}(I)^{2}$.
Obviously one has to consider only the case where $A_{0}-c\mathbf{I}$ is injective, so $\tilde{u}^{-1}(c)$ has zero measure.
One has
\[
gH\int_{0}^{1}\phi_{1}\,\mathrm{d}\lambda=(c-\tilde{u})H\phi_{2}+Hb_{2}=(c-\tilde{u})^{2}\phi_{1}+(c-\tilde{u})b_{1}+Hb_{2},
\]
therefore
\begin{align}
\phi_{1} & =\frac{H}{(c-\tilde{u})^{2}}\left(g\int_{0}^{1}\phi_{1}\,\mathrm{d}\lambda-b_{2}\right)-\frac{b_{1}}{c-\tilde{u}},\label{eq:sol_phi_b1}\\
\phi_{2} & =\frac{1}{c-\tilde{u}}\left(g\int_{0}^{1}\phi_{1}\,\mathrm{d}\lambda-b_{2}\right).\label{eq:sol_phi_b2} 
\end{align}

We now have to distinguish whether $(c-\tilde{u})^{-1}\notin L^{4}(I)$
or $(c-\tilde{u})^{-1}\in L^{4}(I)$.
\begin{enumerate}
\item If $(c-\tilde{u})^{-1}\notin L^{4}(I)$, then as before $(c-\tilde{u})^{-2}\notin L^{2}(I)$
and thus, taking for example $b_{2}=\lambda$ and $b_{1}=0$ ensures
that $\phi_{1}\notin L^{2}(I)$ since $\inf_{I}H>0$. Therefore if $(c-\tilde{u})^{-1}\notin L^{4}(I)$,
then $A_{0}-c\mathbf{I}$ is not surjective.
\item If $(c-\tilde{u})^{-1}\in L^{4}(I)$, then $(c-\tilde{u})^{-2}\in L^{2}(I)$
and $(c-\tilde{u})^{-1}\in L^{2}(I)$, thus we have
\[
\int_{0}^{1}\phi_{1}\,\mathrm{d}\lambda=\int_{0}^{1}\frac{gH}{(c-\tilde{u})^{2}}\mathrm{d}\lambda\,\int_{0}^{1}\phi_{1}\,\mathrm{d}\lambda-\int_{0}^{1}\left(\frac{b_{1}}{c-\tilde{u}}+\frac{Hb_{2}}{(c-\tilde{u})^{2}}\right),
\]
so
\[
\kappa\int_{0}^{1}\phi_{1}\,\mathrm{d}\lambda=\int_{0}^{1}\left(\frac{b_{1}}{c-\tilde{u}}+\frac{Hb_{2}}{(c-\tilde{u})^{2}}\right),\qquad\text{where}\qquad\kappa=\int_{0}^{1}\frac{gH}{(c-\tilde{u})^{2}}\mathrm{d}\lambda-1.
\]
Since we are only interested in the case where $A_{0}-c\mathbf{I}$
is injective, condition \eqref{eq:condition-eigenvalues} holds so
$\kappa\neq0$, hence $\int_{0}^{1}\phi_{1}\,\mathrm{d}\lambda$ is
well-defined in terms of $b_{1}$and $b_{2}$. We have that equations
\eqref{eq:sol_phi_b1} and \eqref{eq:sol_phi_b2} provide a solution
$\bphi=(\phi_{1},\phi_{2})\in L^{2}(I)^{2}$ for all $\boldsymbol{b}=(b_{1},b_{2})\in L^{2}(I)^{2}$
if and only if $(c-\tilde{u})^{-1}\in L^{\infty}(I)$. We note that
as explained in \cref{sec:appendix}, $(c-\tilde{u})^{-1}\in L^{\infty}(I)$
if and only if $c$ does not belong to the essential range $\tilde{u}[I]$
of $\tilde{u}$.
\end{enumerate}
Consequently, $A_{0}-c\mathbf{I}$ is injective but not surjective
when $\tilde{u}^{-1}(c)$ has zero measure and $c\in\tilde{u}(\bar{I})$.

\item \emph{Density of the range:} It remains to characterize the range
of $A_0-c\mathbf{I}$ when $\tilde{u}^{-1}(c)$ has zero measure.
We will prove that the
range is dense in $L^{2}(I)^{2}$, so that $\sigma_{r}(A_{0})=\emptyset$,
which finishes to characterize $\sigma_{c}(A_{0})$ and $\sigma_{r}(A_{0})$
as stated.
Since $L^{\infty}(I)^{2}$ is dense in $L^{2}(I)^{2}$
it suffices to show that for any $\boldsymbol{b}\in L^{\infty}(I)^{2}$,
there exists a sequence $\bphi_{n}\in L^{2}(I)^{2}$ such that $\boldsymbol{b}_{n}=(A_{0}-c\mathbf{I})\bphi_{n}$
converges to $\boldsymbol{b}$ in $L^{2}(I)^{2}$.
Let us define the following characteristic function
\[
\chi_{n}(\lambda)=\begin{cases}
1 & \text{if }\left|\tilde{u}(\lambda)-c\right|>\frac{1}{n},\\
0 & \text{otherwise},
\end{cases}
\]
and note that since $\tilde{u}^{-1}(c)$ has zero measure, $\chi_n$ converges pointwise to $1$ almost everywhere as $n\to\infty$.
Let $\boldsymbol{b}=(b_1,b_2)\in L^{\infty}(I)^{2}$ and consider $\bphi_{n}=(\phi_{1n},\phi_{2n})\in L^{2}(I)^{2}$
defined by
\begin{align*}
\phi_{1n} & =\frac{\chi_{n}H}{(c-\tilde{u})^{2}}\left[g\theta_{n}-b_{2}\right]-\frac{\chi_{n}b_{1}}{c-\tilde{u}},\\
\phi_{2n} & =\frac{\chi_{n}}{c-\tilde{u}}\left[g\theta_{n}-b_{2}\right],
\end{align*}
where $\theta_{n}$ will be defined later.
One has
\[
\int_{0}^{1}\phi_{1n}\,\mathrm{d}\lambda=\alpha_{n}-\beta_{n},
\]
where
\begin{align*}
\alpha_{n} & =\int_{0}^{1}\frac{g\chi_{n}\theta_{n}H}{(c-\tilde{u})^{2}}\mathrm{d}\lambda, & \beta_{n} & =\int_{0}^{1}\chi_{n}\left(\frac{b_{1}}{c-\tilde{u}}+\frac{Hb_{2}}{(c-\tilde{u})^{2}}\right)\mathrm{d}\lambda.
\end{align*}
Therefore $\boldsymbol{b}_{n}=(A_{0}-c\mathbf{I})\bphi_{n}$ is explicitly
given by
\begin{align*}
b_{n1} & =\chi_{n}b_{1},\\
b_{n2} & =\chi_{n}b_{2}+g\left[\alpha_{n}-\beta_{n}-\chi_{n}\theta_{n}\right].
\end{align*}
Since $\chi_{n}\boldsymbol{b}$ converges pointwise to $\boldsymbol{b}$,
the Lebesgue dominated convergence theorem implies that $\chi_{n}\boldsymbol{b}$
converges to $\boldsymbol{b}$ in $L^{2}(I)^{2}$. Now let us 
prove that the remainder $R_{n}=\alpha_{n}-\beta_{n}-\chi_{n}\theta_{n}$ converges
to zero in $L^{2}(I)$. To this end, we distinguish two cases: (a)
$(c-\tilde{u})^{-1}\in L^{4}$ and condition \eqref{eq:condition-eigenvalues}
does not hold, (b) $(c-\tilde{u})^{-1}\in L^{2}(I)\setminus L^{4}(I)$
and (c) $(c-\tilde{u})^{-1}\notin L^{2}(I)$.
\begin{enumerate}
\item If $(c-\tilde{u})^{-1}\in L^{4}(I)$, we choose
\begin{align*}
\theta_{n} & =\beta_{n}\kappa_{n}^{-1}, & \kappa_{n} & =\int_{0}^{1}\frac{g\chi_{n}H}{(c-\tilde{u})^{2}}\mathrm{d}\lambda-1,
\end{align*}
so that
\[
R_{n}=\alpha_{n}-\beta_{n}-\chi_{n}\theta_{n}=(1-\chi_{n})\theta_{n}.
\]
Since $(c-\tilde{u})^{-1}\in L^{2}(I)$, $\kappa_{n}$ converges to
\[
\kappa=\int_{0}^{1}\frac{gH}{(c-\tilde{u})^{2}}\mathrm{d}\lambda-1,
\]
which is nonzero since \eqref{eq:condition-eigenvalues} does not
hold, we deduce that for $n$ large enough $\kappa_{n}\neq0$ and
$\kappa_{n}^{-1}$ is bounded. Moreover, since $(c-\tilde{u})^{-2}\in L^{1}(I)$,
and $\boldsymbol{b}\in L^{\infty}(I)^{2}$, $\beta_{n}$ is bounded. Therefore
$\theta_{n}$ is bounded, hence $R_{n}$ is uniformly bounded in $L^{2}(I)$
and by the dominated convergence theorem, $R_{n}$ converges to zero
in $L^{2}(I)$.
\item If $(c-\tilde{u})^{-1}\in L^{2}(I)$ but $(c-\tilde{u})^{-1}\notin L^{4}(I)$,
we choose
\[
\theta_{n}=\frac{\beta_{n}C_{n}^{-1}}{(c-\tilde{u})^{2}},\qquad\text{where}\qquad C_{n}=\int_{0}^{1}\frac{\chi_{n}gH}{(c-\tilde{u})^{4}}\mathrm{d}\lambda,
\]
so that $\alpha_{n}=\beta_{n}$ and moreover since $\inf_{I}H>0$, we have $C_{n}\to\infty$
in the limit $n\to\infty$. Since $(c-\tilde{u})^{-1}\in L^{2}(I)$
and $\boldsymbol{b}\in L^{\infty}(I)^{2}$, $\beta_{n}$ is bounded. Finally,
we have
\[
\Vert\chi_{n}\theta_{n}\Vert_{L^{2}(I)}^{2}=\beta_{n}^{2}C_{n}^{-2}\int_{0}^{1}\frac{\chi_{n}}{(c-\tilde{u})^{4}}\mathrm{d}\lambda\leq\frac{\beta_{n}^{2}C_{n}^{-1}}{g\inf_{I}H},
\]
which converges to zero as $n\to\infty$. This finishes to prove that
$R_{n}$ converges to zero in $L^{2}(I)$.
\item If $(c-\tilde{u})^{-1}\notin L^{2}(I)$ , we choose
\begin{align*}
\theta_{n} & =\beta_{n}\kappa_{n}^{-1}, & \kappa_{n} & =\int_{0}^{1}\frac{g\chi_{n}H}{(c-\tilde{u})^{2}}\mathrm{d}\lambda-1,
\end{align*}
as in case (a) so that
\[
R_{n}=\alpha_{n}-\beta_{n}-\chi_{n}\theta_{n}=(1-\chi_{n})\theta_{n}.
\]
Since $(c-\tilde{u})^{-2}\notin L^{1}(I)$, $\kappa_{n}$ is expected
to diverge in the limit $n\to\infty$. More precisely, we have $\kappa_{n}\geq g\inf_{I}H\,C_{n}-1$,
where
\[
C_{n}=\int_{0}^{1}\frac{\chi_{n}}{(c-\tilde{u})^{2}}\mathrm{d}\lambda,
\]
and $C_{n}\to\infty$ as $n\to\infty$ since $(c-\tilde{u})^{-2}\notin L^{1}(I)$.
Therefore, for $n$ large enough $\kappa_{n}>0$ and $\theta_{n}$
is well-defined. Moreover, since $H$, $b_{1}$ and $b_{2}$ are uniformly
bounded, one has for $n$ large enough
\begin{align*}
|\theta_{n}| & \leq\kappa_{n}^{-1}\left(\|b_{1}\|_{\infty}\int_{0}^{1}\frac{\chi_{n}}{c-\tilde{u}}\mathrm{d}\lambda+\|H\|_{\infty}\|b_{2}\|_{\infty}\int_{0}^{1}\frac{\chi_{n}}{(c-\tilde{u})^{2}}\mathrm{d}\lambda\right)\\
 & \leq\frac{1}{g\inf_{I}H\,C_{n}-1}\left(\|b_{1}\|_{\infty}(C_{n}+1)+\|H\|_{\infty}\|b_{2}\|_{\infty}C_{n}\right)\\
 & \lesssim\frac{\|b_{1}\|_{\infty}+\|H\|_{\infty}\|b_{2}\|_{\infty}}{g\inf_{I}H},
\end{align*}
so $\theta_{n}$ is a bounded sequence. Therefore, as before this
finishes to prove that $R_{n}$ converges to zero in $L^{2}(I)$.
\end{enumerate}

\item \emph{Particular case:} Finally, we consider the special case $\tilde{u}\in C^{0,1/4}(\bar{I})$. Since $\tilde{u}$ is continuous, the essential range of $\tilde{u}$ coincides with the image $\tilde{u}(\bar{I})$ and given $c\in\tilde{u}(\bar{I})$ there exists $\lambda_{0}\in\bar{I}$ such that $\tilde{u}(\lambda_{0})=c$. Moreover, since $\tilde{u}\in C^{0,1/4}(\bar{I})$, there exists a constant $K>0$ such that for all $\lambda\in\bar{I}$, 
\[
\left|c-\tilde{u}(\lambda)\right|=\left|\tilde{u}(\lambda)-\tilde{u}(\lambda_{0})\right|\leq K\left|\lambda-\lambda_{0}\right|^{1/4}.
\]
Therefore 
\[
\frac{1}{\left|c-\tilde{u}(\lambda)\right|}\geq\frac{1}{K\left|\lambda-\lambda_{0}\right|^{1/4}},
\]
so $(c-\tilde{u})^{-1}\notin L^{4}(I)$.

\end{enumerate}
\end{proof}

From this result, we can easily deduce the following alternative characterization of the spectrum which is useful to distinguish between two types of elements in the spectrum: the elements that satisfy the integral condition and the range of values of the velocity $\tilde{u}$ on the interval $I$.
\begin{corollary}\label{cor:spectrum}
Under the hypotheses of \cref{thm:spectrum}, the spectrum of $A_{0}:L^{2}(I)^{2}\to L^{2}(I)^{2}$
is characterized by
\begin{align*}
\sigma_{d}(A_{0}) & =\left\{ c\in\mathbb{C}\setminus\tilde{u}(I) : \int_{0}^{1}\frac{gH}{(c-\tilde{u})^{2}}\mathrm{d}\lambda=1\right\} ,\\
\sigma_{e}(A_{0}) & =\tilde{u}(I),
\end{align*}
where  $\sigma_{d}(A_{0})$ is the discrete spectrum and  $\sigma_{e}(A_{0})$ is the essential spectrum defined in \cref{sec:appendix}.
\end{corollary}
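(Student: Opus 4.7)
The plan is to derive the corollary directly from Theorem~\ref{thm:spectrum} via a Fredholm analysis of $A_0-c\mathbf I$. Setting $\mathcal Z := \bigl\{c\in\mathbb C\setminus\tilde u[I]: \int_0^1 gH(c-\tilde u)^{-2}\,\mathrm d\lambda=1\bigr\}$, I will show $\sigma_d(A_0)=\mathcal Z$ and $\sigma_e(A_0)=\tilde u[I]$, the notation $\tilde u(I)$ in the statement being understood as the essential range $\tilde u[I]$ defined in \cref{sec:appendix}.

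First I would re-examine the explicit resolution of $(A_0-c\mathbf I)\bphi=\boldsymbol b$ carried out in the proof of Theorem~\ref{thm:spectrum} for $c\notin\tilde u[I]$. That analysis shows two things: when $\int_0^1 gH(c-\tilde u)^{-2}\,\mathrm d\lambda\neq 1$ the operator is boundedly invertible, while when this integral equals $1$, the kernel is one-dimensional and the range is closed of codimension one, so $A_0-c\mathbf I$ is Fredholm of index zero. Thus $c\mapsto A_0-c\mathbf I$ forms a holomorphic Fredholm family on each connected component of $\mathbb C\setminus\tilde u[I]$, and is invertible for large $|c|$ since $F(c):=\int_0^1 gH(c-\tilde u)^{-2}\,\mathrm d\lambda\to 0$ at infinity. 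By the analytic Fredholm theorem, the non-invertibility set is discrete in $\mathbb C\setminus\tilde u[I]$ and consists of eigenvalues of finite algebraic multiplicity; this set is exactly $\mathcal Z$, whence $\mathcal Z\subset\sigma_d(A_0)$.

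Next I would rule out any contribution from $\tilde u[I]$ to the discrete spectrum. For $c\in\tilde u[I]$ with $\meas(\tilde u^{-1}(c))>0$, Theorem~\ref{thm:spectrum} exhibits an eigenspace containing all zero-mean $L^2$ functions supported on $\tilde u^{-1}(c)$, which is infinite-dimensional, so the algebraic multiplicity is infinite. For $c\in\tilde u[I]$ with $\meas(\tilde u^{-1}(c))=0$, the same theorem places $c$ in $\sigma_c(A_0)$, so $c$ is not an eigenvalue at all. In both cases $c\notin\sigma_d(A_0)$. Combined with the decomposition $\sigma(A_0)=\tilde u[I]\cup\mathcal Z$ from Theorem~\ref{thm:spectrum}, this yields $\sigma_d(A_0)=\mathcal Z$ and, by complementation, $\sigma_e(A_0)=\tilde u[I]$.

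The main subtlety is verifying Fredholmness of $A_0-c\mathbf I$ on the whole of $\mathbb C\setminus\tilde u[I]$: a naive compact-perturbation argument would fail, because the off-diagonal block $(\phi_1,\phi_2)\mapsto(H\phi_2,0)$ is bounded but not compact. Fortunately, Fredholmness (with the relevant kernel and cokernel dimensions) is a direct by-product of the explicit solution formulas \eqref{eq:sol_phi_b1}--\eqref{eq:sol_phi_b2} already derived in the proof of Theorem~\ref{thm:spectrum}, so no further technical input is needed beyond reading off those dimensions case by case.
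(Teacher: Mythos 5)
Your direct Fredholm analysis is a valid alternative route and does give the result, but your closing remark about compact perturbations is mistaken, and it is worth correcting because the paper in fact proves the corollary exactly by a compact-perturbation argument. You write that such an argument ``would fail, because the off-diagonal block $(\phi_1,\phi_2)\mapsto(H\phi_2,0)$ is bounded but not compact.'' That block, however, is not the one the paper treats as the perturbation. The paper writes $A_0 = D_0 + K_0$ with
\[
D_0=\begin{pmatrix}\tilde u & H\\ 0 & \tilde u\end{pmatrix},
\qquad
K_0=\begin{pmatrix}0 & 0\\ g\int_0^1\cdot\,\mathrm d\lambda & 0\end{pmatrix},
\]
so the multiplication-by-$H$ block stays inside the \emph{unperturbed} operator $D_0$, and the compact part $K_0$ is the rank-one integral block. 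The upper-triangular operator $D_0-c\mathbf I$ is invertible (indeed Fredholm) precisely when the multiplication operator $\tilde u-c$ is invertible on $L^2(I)$, i.e.\ when $c\notin\tilde u[I]$; hence $\sigma_e(D_0)=\tilde u[I]$, and stability of the essential spectrum under the finite-rank perturbation $K_0$ gives $\sigma_e(A_0)=\tilde u[I]$ immediately. Combined with $\sigma(A_0)=\tilde u[I]\cup\mathcal Z$ from \cref{thm:spectrum}, this is the whole proof.

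Comparing the two routes: the paper's argument is shorter and requires no further inspection of the resolvent equations, while yours re-reads the explicit solution formulas \eqref{eq:sol_phi_b1}--\eqref{eq:sol_phi_b2} to verify Fredholmness by hand. Your approach is sound, but two remarks. First, the appeal to the analytic Fredholm theorem is unnecessary: with the paper's definition of $\sigma_d$ (not invertible but Fredholm), once you have shown that $A_0-c\mathbf I$ is Fredholm with nontrivial kernel for $c\in\mathcal Z$, the inclusion $\mathcal Z\subset\sigma_d(A_0)$ is immediate by definition; no discreteness or analyticity is needed. Second, when excluding $c\in\tilde u[I]$ from $\sigma_d$, the relevant obstruction is failure of Fredholmness, not infinite algebraic multiplicity as such: if $\meas(\tilde u^{-1}(c))>0$ the kernel is infinite-dimensional so $A_0-c\mathbf I$ is not Fredholm, and if $\meas(\tilde u^{-1}(c))=0$ then $c\in\sigma_c(A_0)$ by \cref{thm:spectrum}, so the range is dense but not closed, again ruling out Fredholmness. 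Phrasing it this way keeps the argument aligned with the definitions in \cref{sec:appendix}.
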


\begin{proof}
The operator $A_0$ is a compact perturbation of the operator $A_0$ with $g=0$. More precisely, we have $A_0=D_0 + K_0$, where
\begin{align*}
    D_0 &= \begin{pmatrix}
        \tilde{u} & H\\
        0 & \tilde{u}
    \end{pmatrix},&
    K_0 &= \begin{pmatrix}
        0 & 0\\
        g\int_{0}^{1}\cdot\,\mathrm{d}\lambda & 0
    \end{pmatrix},
\end{align*}
and $K_0$ is compact (even finite-rank).
Using the fact that the essential spectrum is invariant under compact perturbations, we directly deduce the corollary using \cref{thm:spectrum}.
\end{proof}

\begin{remark}

The above results can probably be generalized to $A_0:L^p(I)^2\mapsto L^p(I)^2$ for $p\geq1$, however, some parts of the proof (like 3. (b)) require some adaptation.

\end{remark}

\begin{remark} \label{rem:cond_dom_reel}
    The integral condition \eqref{eq:condition-eigenvalues} can be rewritten as follows in the original variables:
    \[
        1=\int_{0}^{1}\frac{gH}{(c-\tilde{u})^{2}}\mathrm{d}\lambda=\int_{z_b}^{\eta}\frac{g}{(c-u)^{2}}\mathrm{d}z.
    \]
\end{remark}

\begin{remark} \label{rem:spectrum_saint_venant}
In the shallow water regime (see \cref{subsec:shallow_water}), \emph{i.e.}, when $\tilde{u}$ is independent of $\lambda$, then the spectrum reduces to
\[
    \sigma(A_{0}) = \left\{ \tilde{u} + \sqrt{gh}, \tilde{u} - \sqrt{gh}, \tilde{u} \right\},
    \qquad\text{where}\qquad
    h = \int_0^1 H \mathrm{d}\lambda = \eta - z_b.
\]
The first two eigenvalues $\tilde{u} \pm \sqrt{gh}$ are the usual eigenvalues of the Saint-Venant system, whereas $\tilde{u}$ is a spurious eigenvalue coming from the fact that adding a dependency in $\lambda$ in Saint-Venant, which is expressed through a transport equation with velocity $u$ for $\phi$, is artificial.
\end{remark}

We have the following localization result of the spectrum, which is a reminiscence of Howard's semi-circle theorem \cite{Howard1961}.
\begin{proposition} \label{prop:spectrum_localization}
    Under the assumptions of \cref{thm:spectrum}, $\sigma(A_0)$ is contained in the union of the following three sets represented on \cref{fig:spectrum_localization}:
    \begin{align}\label{eq:def_J_pm}
        J_{-} &= \bigl[\tilde{u}_{-}-\sqrt{gh},\tilde{u}_{-}\bigr), &
        J_{+} &= \bigl(\tilde{u}_{+},\tilde{u}_{+}+\sqrt{gh}\bigr].
    \end{align}
    
    \begin{equation}
        R=\Bigl\{ z\in\mathbb{C}:\tilde{u}_{-}\leq\Re z\leq\tilde{u}_{+}\text{ and }|\Im z|\leq\sqrt{gh}\text{ and } \Bigl|z-\frac{\tilde{u}_{-}+\tilde{u}_{+}}{2}\Bigr|\leq\frac{\tilde{u}_{+}-\tilde{u}_{-}}{2} \Bigr\},\label{eq:def_R}
    \end{equation}
    
    where
    \begin{align*}
        \tilde{u}_{-} &= \inf_I \tilde{u}, &
        \tilde{u}_{+} &= \sup_I \tilde{u}, &
        h = \int_{0}^{1}H\mathrm{d}\lambda.
    \end{align*}
\end{proposition}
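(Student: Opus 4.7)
My plan is to separate the two components of the spectrum identified by \cref{thm:spectrum}: the essential range $\tilde{u}[I]$ and the set $E$ of discrete eigenvalues $c\in\mathbb{C}\setminus\tilde{u}[I]$ satisfying the integral relation $\int_{0}^{1}gH(c-\tilde{u})^{-2}\,\mathrm{d}\lambda=1$, and treat each piece separately. The essential range is always contained in $[\tilde{u}_-,\tilde{u}_+]$, which is the real diameter of the disk appearing in \eqref{eq:def_R}; since $\Im z=0$ there, all three conditions defining $R$ are trivially satisfied, so $\tilde{u}[I]\subseteq R$.

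For real $c\in E$ with $c>\tilde{u}_+$, I would use the uniform pointwise bound $(c-\tilde{u})^{2}\geq(c-\tilde{u}_+)^{2}$ inside the integral to obtain $1\leq gh/(c-\tilde{u}_+)^{2}$, hence $c\in J_+$. The symmetric argument yields $c\in J_-$ when $c<\tilde{u}_-$, and real $c\in E$ lying in $[\tilde{u}_-,\tilde{u}_+]$ already sit on the diameter of the disk defining $R$.

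The substantial case is a non-real eigenvalue $c=a+ib$ with $b\neq 0$, for which the plan is a Howard-type semicircle argument. Taking the imaginary and real parts of the integral relation gives
\[
\int_{0}^{1}\frac{gH(a-\tilde{u})}{|c-\tilde{u}|^{4}}\,\mathrm{d}\lambda=0 \qquad\text{and}\qquad \int_{0}^{1}\frac{gH\bigl((a-\tilde{u})^{2}-b^{2}\bigr)}{|c-\tilde{u}|^{4}}\,\mathrm{d}\lambda=1.
\]
The first identity realizes $a$ as a positive-weight average of $\tilde{u}$, so automatically $\tilde{u}_-\leq a\leq\tilde{u}_+$. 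The key step is then to integrate the pointwise inequality $(\tilde{u}-\tilde{u}_-)(\tilde{u}-\tilde{u}_+)\leq 0$ against the positive weight $gH/|c-\tilde{u}|^{4}$; after expanding the quadratic, the linear-in-$\tilde{u}$ term is killed by the imaginary-part identity and the quadratic term is eliminated via the real-part identity, producing
\[
1+\bigl[(a-\tilde{u}_-)(a-\tilde{u}_+)+b^{2}\bigr]\int_{0}^{1}\frac{gH}{|c-\tilde{u}|^{4}}\,\mathrm{d}\lambda\leq 0.
\]
Since the remaining integral is strictly positive, this forces $(a-\tilde{u}_-)(a-\tilde{u}_+)+b^{2}<0$, which is exactly the disk condition in \eqref{eq:def_R}. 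The independent bound $|b|\leq\sqrt{gh}$ is more elementary: applying $|c-\tilde{u}|^{2}\geq b^{2}$ and $(a-\tilde{u})^{2}-b^{2}\leq|c-\tilde{u}|^{2}$ to the real-part identity yields $1\leq gh/b^{2}$.

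I expect the main obstacle to be the algebraic bookkeeping in the semicircle estimate, namely identifying the precise combination of the two integral identities under which the linear term in $\tilde{u}$ cancels and the Howard disk inequality emerges cleanly; once this cancellation is set up, the remaining reasoning is a sequence of elementary pointwise estimates and requires no spectral machinery beyond \cref{thm:spectrum}.
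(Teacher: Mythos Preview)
Your proof is correct and follows essentially the same strategy as the paper: both reduce to the integral condition $F(c)=1$ via \cref{thm:spectrum}, use the imaginary-part identity to confine $\Re c$ to $[\tilde{u}_-,\tilde{u}_+]$ when $\Im c\neq 0$, obtain the $\sqrt{gh}$ bound from a pointwise estimate on $|c-\tilde{u}|$, and derive the disk condition by a Howard semicircle argument. The only differences are cosmetic: the paper packages the Howard step through an auxiliary identity $\Re F(c)+\frac{\Im F(c)}{\Im c}(\Re c-\beta)=\int gH|c-\tilde{u}|^{-4}\bigl[(\tilde{u}-\beta)^2-|c-\beta|^2\bigr]\mathrm{d}\lambda$ and optimizes over $\beta$, which at $\beta=\frac{\tilde{u}_-+\tilde{u}_+}{2}$ is algebraically identical to your direct integration of $(\tilde{u}-\tilde{u}_-)(\tilde{u}-\tilde{u}_+)\leq 0$; and the paper obtains the $J_\pm$ bound and the $|\Im c|\leq\sqrt{gh}$ bound simultaneously via a single distance estimate $\dist(c,[\tilde{u}_-,\tilde{u}_+])\leq\sqrt{gh}$, whereas you split these into two separate steps.
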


\begin{figure}
\centering
\begin{tikzpicture}[scale=1.2]
	\draw[thick,->] (-4.5,0) -- (4.5,0) node[right] {$\Re c$};
	\draw[thick,->] (0,-2.) -- (0,2) node[above] {$\Im c$};
	\draw (0,1.5) node[right] {$+\sqrt{gh}$};
	\draw (0,-1.5) node[right] {$-\sqrt{gh}$};
    \begin{scope}
      \clip (0,0) circle (2.5);
	  \fill[nearly transparent](-2.5,-1.5) rectangle (2.5,1.5);
    \end{scope}
	\draw (-2.1,1.5) node[below right] {$R$};
	\draw (0,0) circle (2.5);
	\fill[nearly transparent](-4,-0.05) rectangle (-2.5,0.05);
	\draw (-3.25,0) node[below] {$J_-$};
	\fill[nearly transparent](4,-0.05) rectangle (2.5,0.05);
	\draw (3.25,0) node[below] {$J_+$};
	\draw[fill] (-2.5,0) circle (2pt);
	\draw (-2.5,0) node[below] {$\tilde{u}_-$};
	\draw[fill] (2.5,0) circle (2pt);
	\draw (2.5,0) node[below] {$\tilde{u}_+$};
	\draw[line width=2pt] (-2.5,0) -- (2.5,0);
	\draw (0,0) node[below right] {$\tilde{u}[I]$};
\end{tikzpicture}
\caption{ The spectrum $\sigma(A)$ is included in the rectangle of height $2\sqrt{gh}$ bounded between $\tilde{u}_{-}$ and $\tilde{u}_{+}$ clipped by the circle of radius $(\tilde{u}_{+}-\tilde{u}_{-})/2$ centered at $(\tilde{u}_{-}+\tilde{u}_{+})/2$ $R$ and intervals $J_\pm$ defined in \cref{prop:spectrum_localization}. The continuous part of the spectrum $\tilde{u}[I]$ is included in the line between $\tilde{u}_{-}$ and $\tilde{u}_{+}$.}
\label{fig:spectrum_localization}
\end{figure}

\begin{proof}
It suffices to prove that $\sigma_{d}(A_{0})$ is included in the
claimed set, so to consider the values $c$ satisfying the integral condition \eqref{eq:condition-eigenvalues}.
Therefore this reduces to study the function $F$ defined on
$\mathbb{C}\setminus[\tilde{u}_{-},\tilde{u}_{+}]$ by: 
\begin{equation}
F(c)=\int_{0}^{1}\frac{gH}{(c-\tilde{u})^{2}}\mathrm{d}\lambda.\label{eq:F_c}
\end{equation}
The proof is divided into three steps.
\begin{enumerate}
    \item On the one hand, since
    \[
        \Im F(c)=-2\int_{0}^{1}\frac{gH(\Re c-\tilde{u})\Im c}{|c-\tilde{u}|^{4}}\mathrm{d}\lambda,
    \]
    if $\Re c>\tilde{u}_{+}$ or $\Re c<\tilde{u}_{-}$, then
    $(\Re c-\tilde{u})$ is either strictly positive or strictly negative
    on $I$, so the only way to make the integral $\Im F(c)$ zero, is
    that $\Im c=0$. This proves that 
    \[
        \sigma(A_{0})\subset\left\{ z\in\mathbb{C}:\tilde{u}_{-}\leq\Re z\leq\tilde{u}_{+}\right\} \cup\R.
    \]
    
    \item On the other hand, if $\dist(c,[\tilde{u}_{-},\tilde{u}_{+}])>\sqrt{gh}$, then for almost any
    $\lambda\in I$, $|c-\tilde{u}|>\sqrt{gh}$ and therefore
    \[
    |F(c)|=\int_{0}^{1}\frac{gH}{|c-\tilde{u}|^{2}}\mathrm{d}\lambda<\frac{1}{h}\int_{0}^{1}H\mathrm{d}\lambda=1,
    \]
    so $c$ cannot be an eigenvalue. Therefore, we have shown that
    \[
    \sigma(A_{0})\subset\left\{ z\in\mathbb{C} : \dist(z,[\tilde{u}_{-},\tilde{u}_{+}])\leq\sqrt{gh}\right\}.
    \]
    
    \item Finally, we assume that $\Im c\neq0$. An explicit calculation shows
    that for any $\beta\in\mathbb{R}$
    \[
        \Re F(c)+\frac{\Im F(c)}{\Im c}\left(\Re c-\beta\right)=G(c,\beta),
    \]
    where
    \[
        G(c,\beta)=\int_{0}^{1}\frac{gH}{|c-\tilde{u}|^{4}}\left((\tilde{u}-\beta)^{2}-|c-\beta|^{2}\right)\mathrm{d}\lambda.
    \]
    Therefore, $F(c)=1$ if and only if $G(c,\beta)=1$ for all $\beta\in\mathbb{R}$.
    Since $|\tilde{u}-\beta|\leq\max\{|\tilde{u}_{-}-\beta|,|\tilde{u}_{+}-\beta|\}$,
    we deduce the following property: if for some $\beta\in\mathbb{R}$, $|c-\beta|>\max\{|\tilde{u}_{-}-\beta|,|\tilde{u}_{+}-\beta|\}$,
    then $G(c,\beta)\leq0$ and $c$ cannot be an eigenvalue. One can
    check that this condition defines the largest complement of a disk for the optimal value $\beta=(\tilde{u}_{-}+\tilde{u}_{+})/2$
    and for this value the condition is exactly $|c-\beta|>(\tilde{u}_{+}-\tilde{u}_{-})/2$.
    Therefore, we showed that
    \[
        \sigma(A_{0})\cap\left(\mathbb{C}\setminus\mathbb{R}\right)\subset\Bigl\{ z\in\mathbb{C}:\Bigl|z-\frac{\tilde{u}_{-}+\tilde{u}_{+}}{2}\Bigr|\leq\frac{\tilde{u}_{+}-\tilde{u}_{-}}{2}\Bigr\},
    \]
    and the proof is complete.
\end{enumerate}
\end{proof}

\subsection{Limiting cases}\label{subsec:limiting_cases}
Finding whether the values $c\in \sigma_d(A_0)$ are real or complex is essential to determine the hyperbolicity conditions for the associated system \eqref{eq:new_system_vector_1d}. As noted in \cite{     chesnokov2009,chesnokov2017,teshukov2004,teshukov1997,teshukov1995} for regular velocity profiles there exists at least two real eigenvalues of the operator $A_0$ outside the range of values of the velocity $\tilde{u}$. In \cite{chesnokov2017}, generalized hyperbolicity conditions for system \eqref{eq:new_system_vector_1d} with a monotonic velocity profile are formulated using the limit values of \eqref{eq:condition-eigenvalues} in the upper and lower complex half-planes. 
In the general case, as shown in this subsection, these two real solutions can only exist under certain regularity assumptions on the velocity $\tilde{u}(\lambda)$.

We have the following result on the minimum regularity criterion for the existence of these two real eigenvalues.

\begin{proposition}\label{prop:limit_case}
Under the assumptions of \cref{thm:spectrum}, if either:
\begin{enumerate}
\item $\tilde{u}\in C^{0,1/2}(\bar{I})$, or
\item $\tilde{u}\in C^{0,1/4}(\bar{I})$ with Hölder constant $K>0$ satisfying
$K<\sqrt{2g\displaystyle\inf_I H}$,
\end{enumerate}
there exists exactly two real eigenvalues $c_{\pm}\in J_{\pm}$,
where the intervals $J_{\pm}$ are defined in \eqref{eq:def_J_pm},
such that
\[
    \sigma(A_0)\cap\mathbb{R}=\left\{ c_{-},c_{+}\right\} \cup \tilde{u}(\bar{I}), 
    \quad\text{or}\quad
    \sigma_{d}(A_0)\cap\mathbb{R}=\left\{ c_{-},c_{+}\right\}.
\]
\end{proposition}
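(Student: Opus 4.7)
The plan is to reduce the question to studying a monotone real function of $c$ and to quantifying its boundary behavior at $\tilde{u}_{\pm}$. Recall from \cref{cor:spectrum,prop:spectrum_localization} that real elements of the spectrum outside $\tilde{u}(\bar{I})$ are precisely the $c\in J_{-}\cup J_{+}$ such that
\[
    F(c) := \int_{0}^{1}\frac{gH}{(c-\tilde{u})^{2}}\,\mathrm{d}\lambda = 1.
\]
Differentiating under the integral sign, $F'(c) = -2\int_{0}^{1}gH(c-\tilde{u})^{-3}\,\mathrm{d}\lambda$ is strictly negative on $(\tilde{u}_{+},+\infty)$ and strictly positive on $(-\infty,\tilde{u}_{-})$, while $F(c)\to 0$ as $|c|\to\infty$. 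Hence $F=1$ has at most one solution in each half-line, and the whole proof reduces to establishing $\lim_{c\to\tilde{u}_{+}^{+}}F(c)>1$ and, symmetrically, $\lim_{c\to\tilde{u}_{-}^{-}}F(c)>1$; the intermediate value theorem then produces unique $c_{\pm}$ in $(\tilde{u}_{+},+\infty)$ and $(-\infty,\tilde{u}_{-})$, and their membership in $J_{\pm}$ is immediate from \cref{prop:spectrum_localization}.

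To obtain the lower bound at $\tilde{u}_{+}$, I apply Fatou's lemma to the nonnegative integrand as $c\downarrow\tilde{u}_{+}$:
\[
    \liminf_{c\to\tilde{u}_{+}^{+}}F(c) \geq \int_{0}^{1}\frac{gH}{(\tilde{u}_{+}-\tilde{u})^{2}}\,\mathrm{d}\lambda.
\]
By continuity on the compact set $\bar{I}$, $\tilde{u}$ attains its maximum at some $\lambda_{+}\in\bar{I}$. In case (1), $\tilde{u}\in C^{0,1/2}(\bar{I})$ gives $|\tilde{u}_{+}-\tilde{u}(\lambda)|\leq K|\lambda-\lambda_{+}|^{1/2}$, hence $(\tilde{u}_{+}-\tilde{u})^{-2}\geq K^{-2}|\lambda-\lambda_{+}|^{-1}$, which is not integrable near $\lambda_{+}$, so the right-hand side is $+\infty$. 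In case (2), $\tilde{u}\in C^{0,1/4}(\bar{I})$ gives $(\tilde{u}_{+}-\tilde{u})^{-2}\geq K^{-2}|\lambda-\lambda_{+}|^{-1/2}$, and
\[
    \int_{0}^{1}\frac{gH}{(\tilde{u}_{+}-\tilde{u})^{2}}\,\mathrm{d}\lambda \geq \frac{g\inf_{I}H}{K^{2}}\int_{0}^{1}|\lambda-\lambda_{+}|^{-1/2}\,\mathrm{d}\lambda = \frac{2g\inf_{I}H}{K^{2}}\bigl(\sqrt{\lambda_{+}}+\sqrt{1-\lambda_{+}}\bigr),
\]
and the last factor is minimized by $1$ at the endpoints $\lambda_{+}\in\{0,1\}$, so the overall lower bound is at least $2g\inf_{I}H/K^{2}$. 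The hypothesis $K<\sqrt{2g\inf_{I}H}$ then makes this strictly larger than $1$. An identical argument at any $\lambda_{-}$ realizing $\tilde{u}_{-}$ yields the bound $\lim_{c\to\tilde{u}_{-}^{-}}F(c)>1$.

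The main obstacle, and the origin of both the Hölder exponent $1/4$ and the explicit constant $\sqrt{2g\inf_{I}H}$, lies in case (2): one needs the Fatou lower bound to already exceed $1$ at the limit $c=\tilde{u}_{+}$, which is delicate because at the critical exponent $1/4$ the integrand $(\tilde{u}_{+}-\tilde{u})^{-2}$ is only borderline integrable (compare the $L^{4}$ threshold in \cref{thm:spectrum}), so the constant cannot be absorbed and must be optimized over the location of $\lambda_{+}$. The smooth subcase (1) is comparatively effortless since the bounding integral diverges, making the crossing automatic independently of $K$.
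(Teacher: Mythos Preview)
Your proof is correct and follows essentially the same approach as the paper: both reduce to the strict monotonicity of $F$ on each half-line and to showing $\liminf_{c\to\tilde{u}_{\pm}^{\pm}}F(c)>1$ via the H\"older bound at a point $\lambda_{\pm}$ where the extremum is attained. Your use of Fatou's lemma to pass to the limiting integral is a minor streamlining (the paper instead estimates $F(\tilde{u}_{\pm}\pm\delta)$ directly in case~(1)), and where you invoke \cref{prop:spectrum_localization} to place $c_{\pm}$ in $J_{\pm}$, the paper instead checks $F(\tilde{u}_{\pm}\pm\sqrt{gh})\leq 1$ directly.
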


\begin{proof}
In view of \eqref{eq:condition-eigenvalues}, this reduces to
study  the function $F$ defined on $J_{+}\cup J_{-}$ by \eqref{eq:F_c}. 
One can check that the function $F$ is continuous on $J_{\pm}$.
The first step is to show that
\begin{align*}
    \lim_{c\to \tilde{u}_{-},c<\tilde{u}_{-}}F(c) > 1 \quad \textrm{ and } \quad \lim_{c\to \tilde{u}_{+},c>\tilde{u}_{+}}F(c) > 1.
\end{align*}
Since $\tilde{u}$ is continuous, there exists $\lambda_{\pm}\in \bar{I}$
such that $\tilde{u}(\lambda_{\pm})=\tilde{u}_{\pm}$.
\begin{enumerate}
\item{If $\tilde{u}$ is $\frac{\ensuremath{1}}{2}$-Hölder continuous,
there exists $K>0$ such that for all $\lambda\in \bar{I}$, 
\[
\left|\tilde{u}(\lambda)-\tilde{u}_{\pm}\right|\leq K\left|\lambda-\lambda_{\pm}\right|^{1/2},
\]
so for $c=\tilde{u}_{\pm}\pm\delta$ with $\delta>0$, we have 
\[
\left|\tilde{u}(\lambda)-c\right|\leq\left|\tilde{u}(\lambda)-\tilde{u}_{\pm}\right|+\delta\leq K\left|\lambda-\lambda_{\pm}\right|^{1/2}+\delta.
\]
Therefore, there exists a constant $L>0$ such that for all $\delta\in(0,1)$,
\[
F(c)=F(\tilde{u}_{\pm}\pm\delta)\geq \frac{g}{2} \inf_{I}H\,\int_{0}^{1}\frac{1}{K^{2}\left|\lambda-\lambda_{\pm}\right|+\delta^{2}}\mathrm{d}\lambda\geq L|\log\delta^2|,
\]
which proves 
\begin{align*}
    \lim_{c\to \tilde{u}_{-},c<\tilde{u}_{-}}F(c)=\infty \quad \textrm{ and }  \quad \lim_{c\to \tilde{u}_{+},c>\tilde{u}_{+}}F(c)=\infty.
\end{align*}
}
\item{If $\tilde{u}$ is $\frac{\ensuremath{1}}{4}$-Hölder continuous with
constant $K>0$, then for all $\lambda\in I$,
\[
\left|\tilde{u}(\lambda)-\tilde{u}_{\pm}\right|\leq K\left|\lambda-\lambda_{\pm}\right|^{1/4},
\]
therefore
\[
F(\tilde{u}_{\pm})\geq g \inf_I H \int_{0}^{1}\frac{1}{K^{2}\left|\lambda-\lambda_{\pm}\right|^{1/2}}\mathrm{d}\lambda=\frac{2g}{K^{2}}\inf_{I}H\left(\sqrt{1-\lambda_{\pm}}+\sqrt{\lambda_{\pm}}\right)\geq\frac{2g}{K^{2}}\inf_{I}H.
\]
Hence on the one hand if the constant $K$ satisfies $K\displaystyle<\sqrt{2g\inf_I H}$, then
\begin{align*}
    \lim_{c\to \tilde{u}_{-},c<\tilde{u}_{-}}F(c) > 1 \quad \textrm{ and }  \quad \lim_{c\to \tilde{u}_{+},c>\tilde{u}_{+}}F(c) > 1.
\end{align*}
}
\end{enumerate}

On the other hand, for all $\lambda\in \bar{I}$ one has $|\tilde{u}_{\pm}\pm\sqrt{gh}-\tilde{u}|\geq\sqrt{gh}$
so
\[
F(\tilde{u}_{\pm}\pm\sqrt{gh})\le1.
\]
Taking the derivative, one has
\[
F^{\prime}(c)=-\int_{0}^{1}\frac{2gH}{(c-\tilde{u})^{3}}\mathrm{d}\lambda,
\]
which shows that $F$ is strictly decreasing on $J_{+}$ and strictly
increasing on $J_{-}$. Therefore, there exists exactly one solution
of $F(c)=1$ in $J_{-}$ and exactly one in $J_{+}$.
\end{proof}

\begin{remark} \label{rem:optimal}
The previous result is almost optimal in two aspects. The first is that no smallness assumption is required in $C^{0,1/2}(\bar{I})$ whereas a smallness assumption is required in $C^{0,1/4}(\bar{I})$. In fact one can show that $C^{0,1/2}(\bar{I})$ is the critical space, as a (more complicated) smallness assumption is also required in $C^{0,\alpha}(\bar{I})$ for $\frac{1}{4}<\alpha<\frac{1}{2}$. The second aspect is that this result is also optimal despite the smallness assumption required in $C^{0,1/4}(\bar{I})$, in view of the following example.
Consider $H(\lambda)=1$ and $\tilde{u}(\lambda)=K\lambda^{1/4}$ for some $K>0$,
then $\tilde{\boldsymbol U}=(H,\tilde{u})$ is a stationary solution of system \eqref{eq:new_system_vector_1d}. An explicit computation shows that $\tilde{u}_{-}=0$,
$\tilde{u}_{+}=K$, and that $F$ is strictly decreasing on $J_{+}$ and strictly
increasing on $J_{-}$. Moreover, we have the following limits:
\begin{align*}
\lim_{c\to \tilde{u}_{-}} & F(c)=\frac{2g}{K^{2}}, & \lim_{c\to \tilde{u}_{+}}F(c) & =\infty.
\end{align*}
This proves that there are exactly two real eigenvalues when $K<\sqrt{2g}$
and exactly one (which is in $J_{+}$) when $K\geq\sqrt{2g}$, see \cref{fig:limitingcase}.
\end{remark}
\begin{figure}
    \centering
    \includegraphics{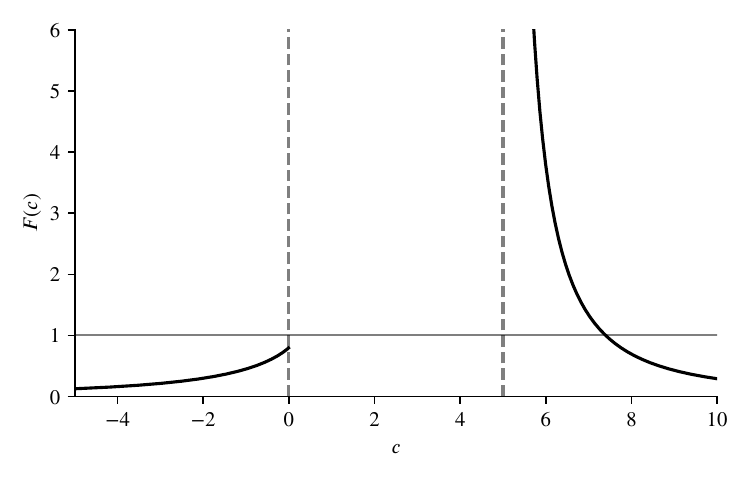}
    \caption{The limiting case for $\tilde{u}(\lambda)=K\lambda^{1/4}$, $H(\lambda)=1$, and $K\geq\sqrt{2g}$ for $g=10$.}
    \label{fig:limitingcase}
\end{figure}

Finally, under additional hypotheses, we can ensure that the spectrum is fully real.
\begin{proposition} \label{prop:u_convexe}
Let $H\in C^{1}(\bar{I})$ with $H>0$ and $\tilde{u}\in C^2(\bar{I})$ be strictly monotonic in $\lambda$. If the vorticity $\tilde{\omega}=\partial_{\lambda}\tilde{u}/H$ defined in \cref{prop:vorticity_new} satisfies either:
\begin{enumerate}
\item $\partial_{\lambda}\tilde{\omega}\neq0$ for all $\lambda\in\bar{I}$,
\item or there exists $\lambda_{c}\in I$ such that $\partial_{\lambda}\tilde{\omega}(\lambda_{c})=0$ and
 $\partial_{\lambda}\tilde{\omega}\left(\tilde{u}-\tilde{u}(\lambda_{c})\right)>0$ for all $\lambda\neq\lambda_{c}$,
\end{enumerate}

then
\begin{align*}
    \sigma_{p}(A_0) &= \sigma_{d}(A_0) = \left\{ c_{-},c_{+}\right\}, &
    \sigma_{c}(A_0) &= \sigma_{e}(A_0) = \tilde{u}(\bar{I}), &
    \sigma_{r}(A_0) &= \emptyset,
\end{align*}
where $c_{\pm}$ are defined in \cref{prop:limit_case}.
\end{proposition}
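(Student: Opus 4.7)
The plan is to combine \cref{prop:limit_case} with a Fjortoft--Rayleigh type argument, familiar from the linear stability theory of parallel shear flows, to show that the discrete spectrum consists exactly of the two real eigenvalues $c_\pm$. Since $\tilde{u}\in C^{2}(\bar{I})$ is in particular Lipschitz and hence in $C^{0,1/2}(\bar{I})$, the first case of \cref{prop:limit_case} already provides two real eigenvalues $c_{\pm}\in J_{\pm}$. What remains is to rule out non-real points of $\sigma_{d}(A_{0})$, that is, to show that
\[
F(c)=\int_{0}^{1}\frac{gH}{(c-\tilde{u})^{2}}\,\mathrm{d}\lambda=1
\]
has no solution with $\Im c\neq 0$.

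Without loss of generality I would assume that $\tilde{u}$ is strictly increasing (the opposite case follows by the substitution $\lambda\mapsto 1-\lambda$), so that $\tilde{u}'>0$ and $\tilde{\omega}=\tilde{u}'/H>0$ on $\bar{I}$. Writing $H=\tilde{u}'/\tilde{\omega}$ and using $\tilde{u}'/(c-\tilde{u})^{2}=\partial_{\lambda}(1/(c-\tilde{u}))$, an integration by parts in $\lambda$ yields the representation
\[
F(c)=\frac{g/\tilde{\omega}(1)}{c-\tilde{u}_{+}}-\frac{g/\tilde{\omega}(0)}{c-\tilde{u}_{-}}+\int_{0}^{1}\frac{g\,\partial_{\lambda}\tilde{\omega}/\tilde{\omega}^{2}}{c-\tilde{u}}\,\mathrm{d}\lambda.
\]
Assume for contradiction that $c=\alpha+i\beta$ with $\beta\neq 0$ satisfies $F(c)=1$. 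Separating real and imaginary parts, the equation $\Im F(c)=0$ is linear in $\alpha$ and can be used to eliminate $\alpha$ from $\Re F(c)=1$. This produces, for every $u_{0}\in\mathbb{R}$, the identity
\[
\frac{(u_{0}-\tilde{u}_{+})/\tilde{\omega}(1)}{|c-\tilde{u}_{+}|^{2}}-\frac{(u_{0}-\tilde{u}_{-})/\tilde{\omega}(0)}{|c-\tilde{u}_{-}|^{2}}+\int_{0}^{1}\frac{(u_{0}-\tilde{u})\,\partial_{\lambda}\tilde{\omega}/\tilde{\omega}^{2}}{|c-\tilde{u}|^{2}}\,\mathrm{d}\lambda=\frac{1}{g}.
\]

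The main obstacle---and the key idea---is to select $u_{0}$ so that every term on the left-hand side is non-positive and at least one is strictly negative, contradicting the positivity of $1/g$. In case (i) with $\partial_{\lambda}\tilde{\omega}>0$ throughout $\bar{I}$, the choice $u_{0}=\tilde{u}_{-}$ kills the second boundary term, makes the first boundary term strictly negative, and renders the integrand non-positive since $u_{0}-\tilde{u}\leq 0$; the symmetric choice $u_{0}=\tilde{u}_{+}$ handles the case $\partial_{\lambda}\tilde{\omega}<0$. In case (ii), the right choice is $u_{0}=u_{c}:=\tilde{u}(\lambda_{c})\in(\tilde{u}_{-},\tilde{u}_{+})$: both boundary terms are then strictly negative (since $u_c-\tilde{u}_+<0$ and $u_c-\tilde{u}_->0$), and the Fjortoft-type hypothesis $\partial_{\lambda}\tilde{\omega}\,(\tilde{u}-u_{c})>0$ on $\bar{I}\setminus\{\lambda_{c}\}$ ensures that the integrand is strictly negative away from $\lambda_{c}$. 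In every subcase, the left-hand side is bounded above by a strictly negative quantity, giving the contradiction.

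Having established $\sigma_{d}(A_{0})=\{c_{-},c_{+}\}$, the remaining identifications follow directly from \cref{thm:spectrum} and \cref{cor:spectrum}: strict monotonicity of $\tilde{u}$ ensures $\meas(\tilde{u}^{-1}(c))=0$ for every $c$, so the point and discrete spectra coincide, the continuous spectrum equals $\tilde{u}(\bar{I})$, the essential spectrum is $\tilde{u}(\bar{I})$, and the residual spectrum is empty.
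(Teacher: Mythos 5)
Your proof is correct and follows essentially the same route as the paper: the same integration by parts using $\tilde{\omega}$, the same elimination of the integral by combining $\Re F(c)=1$ with $\Im F(c)=0$, and the same sign analysis of the resulting boundary and integral terms. The one genuine (and slightly cleaner) variation is in case (i): where the paper invokes the mean value theorem for integrals to produce a $\lambda_c\in\bar{I}$ with $\Re F(c)=g\bigl[\,\cdot\,\bigr]_0^1+(\Re c-\tilde{u}(\lambda_c))\int_0^1\cdots$, you instead make the identity explicit for every $u_0$ and directly pick the endpoint value $u_0=\tilde{u}_\mp$, which simultaneously annihilates one boundary term and gives the integrand a definite sign, avoiding the mean-value step entirely.
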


\begin{remark}
Note that in the original variables $\partial_{\lambda}\tilde{\omega}=H\partial_{zz}u$ so that
the first

condition $\partial_{\lambda}\tilde{\omega}\neq 0$ is equivalent to the condition $\partial_{zz}u \neq 0$ in the original domain $\Omega_t$.

Moreover, the second condition $\partial_{\lambda}\tilde{\omega}(\lambda_{c})=0$ and
$\partial_{\lambda}\tilde{\omega}\left(\tilde{u}-\tilde{u}(\lambda_{c})\right)>0$ for $\lambda\neq\lambda_{c}$ is equivalent to $\partial_{zz}u(z_c)=0$ and $\partial_{zz}u\left(u-u(z_c)\right)>0$ for all $z_c\neq z$.

We note that the first condition is precisely the condition proposed in~\cite[Lemma 2.2)]{chesnokov2017} to study the stability of the flow using the Sokhotski–Plemelj formulae.

Finally, it is interesting to remark that these two conditions are precisely the classical Rayleigh and Fjørtoft conditions on the linear instability of shear flows without free-surface (see for example~\cite[§8.2]{Drazin-stability2002}).
\end{remark}

\begin{proof}
Since $\tilde u$ is strictly monotonic, we have that $\meas\bigl(\tilde{u}^{-1}(c)\bigr)=0$ for $c\in\tilde{u}(\bar{I})$, hence $\sigma_{p}(A_0)=\sigma_{d}(A_0)$ and $\sigma_{c}(A_0) = \sigma_{e}(A_0) = \tilde{u}(I)$.
Therefore, it remains to study the function $F$ defined by \eqref{eq:F_c} on $\mathbb{C}\setminus\tilde{u}(\bar{I})$. In view of \cref{prop:spectrum_localization,prop:limit_case} it suffices to prove that $F(c)\neq1$ for all $c\in\mathbb{C}$ such that $\Im c\neq0$ and $\tilde{u}_{-}\leq \Re c\leq\tilde{u}_{+}$. In the following we assume that $c$ satisfies these conditions.

Without loss of generality (by making the change $\tilde{u}\mapsto-\tilde{u}$), we can assume that $\partial_\lambda \tilde u>0$, and therefore, we have

\[
F(c)=\int_{0}^{1}\frac{gH}{(c-\tilde{u})^{2}}\mathrm{d}\lambda=\int_{0}^{1}\frac{1}{\tilde{\omega}}\partial_{\lambda}\Bigl(\frac{g}{c-\tilde{u}}\Bigr)\mathrm{d}\lambda=\left[\frac{g}{\tilde{\omega}(c-\tilde{u})}\right]_{0}^{1}+\int_{0}^{1}\frac{\partial_{\lambda}\tilde{\omega}}{\tilde{\omega}^{2}}\frac{g}{c-\tilde{u}}\mathrm{d}\lambda,
\]
from which we get
\begin{align*}
\Re F(c) & =g\left[\frac{\Re c-\tilde{u}}{\tilde{\omega}|c-\tilde{u}|^{2}}\right]_{0}^{1}+g\int_{0}^{1}\frac{\partial_{\lambda}\tilde{\omega}}{\tilde{\omega}^{2}}\frac{\Re c-\tilde{u}}{|c-\tilde{u}|^{2}}\mathrm{d}\lambda,\\
\Im F(c) & =-\Im c\left[\frac{g}{\tilde{\omega}|c-\tilde{u}|^{2}}\right]_{0}^{1}-\Im c\int_{0}^{1}\frac{\partial_{\lambda}\tilde{\omega}}{\tilde{\omega}^{2}}\frac{g}{|c-\tilde{u}|^{2}}\mathrm{d}\lambda.
\end{align*}

We now distinguish the two cases:
\begin{enumerate}
\item Since $\partial_{\lambda}\tilde{\omega}\neq0$, by continuity $\partial_{\lambda}\tilde{\omega}$
does not change sign on $\bar{I}$ so by using the mean value theorem
on $\Re F(c)$, there exists $\lambda_{c}\in\bar{I}$ such that
\[
\Re F(c)=g\left[\frac{\Re c-\tilde{u}}{\tilde{\omega}|c-\tilde{u}|^{2}}\right]_{0}^{1}+\left(\Re c-\tilde{u}(\lambda_{c})\right)\int_{0}^{1}\frac{\partial_{\lambda}\tilde{\omega}}{\tilde{\omega}^{2}}\frac{g}{|c-\tilde{u}|^{2}}\mathrm{d}\lambda.
\]
\item Since $\partial_{\lambda}\tilde{\omega}\,\tilde{u}\geq\partial_{\lambda}\tilde{\omega}\,\tilde{u}(\lambda_{c})$,
we have
\[
\Re F(c)\leq g\left[\frac{\Re c-\tilde{u}}{\tilde{\omega}|c-\tilde{u}|^{2}}\right]_{0}^{1}+g\int_{0}^{1}\frac{\partial_{\lambda}\tilde{\omega}}{\tilde{\omega}^{2}}\frac{\Re c-\tilde{u}(\lambda_{c})}{|c-\tilde{u}|^{2}}\mathrm{d}\lambda.
\]
\end{enumerate}

Therefore, in either case, we have
\[
\Re F(c)\leq g\left[\frac{\Re c-\tilde{u}}{\tilde{\omega}|c-\tilde{u}|^{2}}\right]_{0}^{1}+\left(\Re c-\tilde{u}(\lambda_{c})\right)\int_{0}^{1}\frac{\partial_{\lambda}\tilde{\omega}}{\tilde{\omega}^{2}}\frac{g}{|c-\tilde{u}|^{2}}\mathrm{d}\lambda,
\]
for some $\lambda_{c}\in\bar{I}$. Now the aim is to combine $\Re F(c)$
and $\Im F(c)$ to remove the integral. Since $\Im c\neq0$, one has
\begin{align*}
\Re F(c)+\frac{\Re c-\tilde{u}(\lambda_{c})}{\Im c}\Im F(c) & \leq g\left[\frac{\Re c-\tilde{u}}{\tilde{\omega}|c-\tilde{u}|^{2}}\right]_{0}^{1}-\left(\Re c-\tilde{u}(\lambda_{c})\right)\left[\frac{g}{\tilde{\omega}|c-\tilde{u}|^{2}}\right]_{0}^{1}\\
 & =\left[\frac{g}{\tilde{\omega}|c-\tilde{u}|^{2}}\left(\tilde{u}(\lambda_{c})-\tilde{u}\right)\right]_{0}^{1}.
\end{align*}

Since $\tilde{u}(0)\leq\tilde{u}(\lambda_{c})\leq\tilde{u}(1)$ and $\tilde{\omega}>0$,
we deduce that 
\[
    \Re F(c)+\frac{\Re c-\tilde{u}(\lambda_{c})}{\Im c}\Im F(c)\leq0,
\]
so
\begin{equation}\label{eq:convexe_bound_Fc}
    \Re F(c)\leq\frac{|\Re c-\tilde{u}(\lambda_{c})|}{|\Im c|}|\Im F(c)|
\end{equation}
which is incompatible with having a solution of $F(c)=1$.
\end{proof}

None of the two assumptions of \cref{prop:u_convexe} can be omitted regardless of the sign of the derivative $\partial_{\lambda}\tilde{u}$ as enlightened by the following counter-example showing the existence of complex eigenvalues, hence possible instability.
\begin{proposition} \label{prop:u_convexe_counterexample}
For $a\neq0$ and $b>0$ such that
\begin{align*}
    b\tanh b &> 1, &
    |a| &< \sqrt{1-(b\tanh b)^{-1}},
\end{align*}
the spectrum of $A_0$ for
\begin{align*}
    H(\lambda) &= g^{-1}, &
    \tilde{u}(\lambda) &= a \tanh(b(2\lambda-1))\,
\end{align*}
has at least two complex conjugate eigenvalues with $\Re c=0$ and $\Im c\neq0$.
\end{proposition}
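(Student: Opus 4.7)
The approach is to invoke the characterization of the discrete spectrum from \cref{cor:spectrum}: since $\tilde u(\bar I)\subset\mathbb R$, any purely imaginary $c=i\mu$ with $\mu\neq 0$ lies outside $\tilde u(\bar I)$ and belongs to $\sigma_d(A_0)$ if and only if $\int_0^1 gH\,(c-\tilde u)^{-2}\,\mathrm d\lambda=1$. With $H\equiv 1/g$ this becomes
\[
    F(c):=\int_0^1\frac{\mathrm d\lambda}{(c-\tilde u(\lambda))^2}=1.
\]
Because $F(\bar c)=\overline{F(c)}$, it suffices to exhibit one $\mu^*>0$ with $F(i\mu^*)=1$; then $\pm i\mu^*$ are the two complex conjugate eigenvalues of the statement. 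The first simplification comes from the symmetry $\tilde u(1-\lambda)=-\tilde u(\lambda)$ (which follows from $\tanh$ being odd). Writing
\[
    \frac{1}{(i\mu-\tilde u)^2}=\frac{\tilde u^2-\mu^2}{(\tilde u^2+\mu^2)^2}+i\,\frac{2\mu\tilde u}{(\tilde u^2+\mu^2)^2},
\]
the imaginary part changes sign under $\lambda\mapsto 1-\lambda$, so it integrates to zero. One has to solve
\[
    F(\mu):=\int_0^1\frac{\tilde u(\lambda)^2-\mu^2}{(\tilde u(\lambda)^2+\mu^2)^2}\,\mathrm d\lambda=1,\qquad \mu>0.
\]

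The plan is then to compute the two boundary values and apply the intermediate value theorem. The limit $F(\mu)\to 0$ as $\mu\to\infty$ is immediate by dominated convergence. For $F(0^+)$ I change variables $y=\tilde u(\lambda)$, with $\mathrm d\lambda=a\,\mathrm dy/[2b(a^2-y^2)]$ on $y\in(-aT,aT)$ and $T=\tanh b$, obtaining
\[
    F(\mu)=\frac{a}{b}\int_0^{aT}\frac{y^2-\mu^2}{(a^2-y^2)(y^2+\mu^2)^2}\,\mathrm dy.
\]
Using the antiderivative identity $(y^2-\mu^2)/(y^2+\mu^2)^2=\partial_y[-y/(y^2+\mu^2)]$ combined with one integration by parts and a partial-fraction decomposition in $y$, together with $\ln[(1+T)/(1-T)]=2b$, I get an explicit closed form for $F(\mu)$ valid for all $\mu>0$. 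Letting $\mu\to 0^+$ in this formula yields
\[
    F(0^+)=\frac{1}{a^2}\left(1-\frac{1}{bT}\right)=\frac{b\tanh b-1}{a^2\,b\tanh b}.
\]
The two hypotheses $b\tanh b>1$ and $a^2<1-1/(b\tanh b)$ are equivalent to $F(0^+)>1$. Continuity of $F$ on $(0,\infty)$ (again by dominated convergence) and the intermediate value theorem produce some $\mu^*\in(0,\infty)$ with $F(\mu^*)=1$, which is the desired eigenvalue.

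The main technical obstacle is the computation of $F(0^+)$. Its naive pointwise limit has a non-integrable singularity at $\lambda=1/2$, where $\tilde u$ vanishes with non-zero derivative; dominated convergence is unavailable. The finiteness of $F(0^+)$ hinges on a delicate cancellation between the large positive and large negative contributions of $(y^2-\mu^2)/(y^2+\mu^2)^2$ near $y=0$, which is exactly what the antiderivative $-y/(y^2+\mu^2)$ captures: on any symmetric window around $0$ the cancellation is complete. The remaining rational integration is routine bookkeeping, so the proof really reduces to executing this partial-fraction calculation carefully enough to read off the limit $\mu\to 0^+$.
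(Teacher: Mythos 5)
Your proof is correct and follows essentially the same route as the paper: restrict $c$ to the imaginary axis, use the oddness of $\tilde u$ about $\lambda=1/2$ to kill $\Im F$, compute the limits $F(0^+)=(1-(b\tanh b)^{-1})/a^2$ and $F(\pm i\infty)=0$, and apply the intermediate value theorem. The only difference is that you spell out the change of variables, integration by parts, and partial fractions that the paper summarizes as ``We can compute $\Re F(i\nu)$ explicitly,'' and you correctly flag that the $\mu\to 0^+$ limit requires the cancellation captured by the antiderivative rather than naive dominated convergence.
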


\begin{proof}
The aim is to study the function $F(c)$ defined by \eqref{eq:F_c} for $c\in i\R^*$, \emph{i.e.} $\Re c=0$ and $\Im c\neq0$. For simplicity, we write $\nu=\Im c$, so
\begin{align*}
    \Re F(i\nu) &= \int_{0}^{1}\frac{\tilde{u}^2 - \nu^2}{(\tilde{u}^2+\nu^2)^2}\mathrm{d}\lambda, &
    \Im F(i\nu) &= 2\int_{0}^{1}\frac{\tilde{u}\nu}{(\tilde{u}^2+\nu^2)^2}\mathrm{d}\lambda,
\end{align*}
The function $\tilde{u}$ being odd with respect to $\lambda=\frac{1}{2}$, we deduce directly that $\Im F(i\nu)=0$.
We can compute $\Re F(i\nu)$ explicitly and in particular we can show that:
\begin{align*}
    \lim_{\nu\to0} \Re F(i\nu) &= \frac{1-(b\tanh b)^{-1}}{a^2}, &
    \lim_{\nu\to\pm\infty} \Re F(i\nu) &= 0.
\end{align*}
Therefore, under the above hypotheses, $ \lim_{\nu\to0} \Re F(i\nu)>1$ and  by continuity, there exists some $\nu\in(0,\infty)$ such that $\Re F(i\nu)=1$.
\end{proof}

\subsection{Generalized Riemann invariants}\label{subsec:riemann}

In this section, we attempt to define a generalized notion of Riemann invariant associated to the eigenvalues $c$, by analogy to the classical theory of hyperbolic systems.
For simplicity only the case $d=1$ is considered.
Given a solution $\boldsymbol{\tilde{U}}$ to \eqref{eq:new_system_vector_1d}, which we assume regular enough, an associated eigenvalue $c\in\sigma_p(A_0)$ depends implicitly on $t$ and $x$ (through $\boldsymbol{\tilde{U}}$ by \eqref{eq:condition-eigenvalues}), but not on $\lambda$.
More precisely, the function $c(t,x)$ is a solution of $F(t,x,c(t,x))=1$, where:
\[
    F(t,x,c) = \int_{0}^{1}\frac{gH(t,x,\lambda)}{(c-\tilde{u}(t,x,\lambda))^{2}}\mathrm{d}\lambda.
\]
Since $\partial_c F(t,x,c)$ is strictly negative on $(0,T)\times\R\times J_{+}$ and strictly positive on $(0,T)\times\R\times J_{-}$, the implicit function theorem ensures the regularity of the function $c(t,x)$ from the regularity of $H$ and $\tilde{u}$.
We shall call a generalized Riemann invariant associated to $c(t,x)$, any function $R(t,x)$ satisfying:
\[
    \frac{\partial R}{\partial t}+c\,\frac{\partial R}{\partial x}=0.
\]
This new definition, introduced in analogy with the Saint-Venant system,  allows us to view the generalized Riemann invariants as quantities transported by the velocity $c(t,x)$ for all $\lambda\in I$, see \cite{teshukov2004}.

\begin{proposition}\label{prop:riemann_inv}
Let $c=c(t,x)\in\sigma_p(A_0)$  be a solution of \eqref{eq:condition-eigenvalues}, then the relation
\begin{equation}
\frac{\partial}{\partial t} \left( c - g\int_0^1 \frac{H}{\tilde u - c}\mathrm{d}\lambda\right) + c\frac{\partial}{\partial x} \left( c - g\int_0^1 \frac{H}{\tilde u - c}\mathrm{d}\lambda\right)= 0,
\label{eq:r_invariants_sanstopo}
\end{equation}
holds and the quantity $\tilde r = \tilde r(t,x)$ defined by 
\[
    \tilde r = c - g\int_0^1 \frac{H}{\tilde u - c}\mathrm{d}\lambda,
\]
can be seen as a generalized Riemann invariant associated to the eigenvalue $c=c(t,x)$.
\end{proposition}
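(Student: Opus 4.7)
The plan is to apply the transport operator $L := \partial_t + c\,\partial_x$ to $\tilde r$ and check by direct computation that $L\tilde r = 0$. Rewriting $\tilde r = c + g\int_0^1 \frac{H}{c-\tilde u}\,\mathrm{d}\lambda$ and using that $c(t,x)$ does not depend on $\lambda$, so that $L$ commutes with $\int_0^1\cdot\,\mathrm{d}\lambda$, I get
\[
    L\tilde r = Lc + g\int_0^1 L\!\left(\frac{H}{c-\tilde u}\right)\mathrm{d}\lambda,
\]
and the task reduces to showing that the integral on the right equals $-Lc$.

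The next step is to compute $LH$ and $L\tilde u$ by decomposing $L = (\partial_t + \tilde u\partial_x) + (c-\tilde u)\partial_x$ and substituting the continuity and momentum equations of \eqref{eq:new_system}. This yields
\[
    LH = -H\partial_x\tilde u + (c-\tilde u)\partial_x H, \qquad L\tilde u = -g\,\partial_x\!\int_0^1 H\,\mathrm{d}\lambda + (c-\tilde u)\partial_x\tilde u.
\]
Expanding $L(H/(c-\tilde u))$ by the quotient rule and inserting these expressions, I expect the terms proportional to $\partial_x\tilde u$ to cancel, leaving
\[
    L\!\left(\frac{H}{c-\tilde u}\right) = \partial_x H - \frac{H\,Lc}{(c-\tilde u)^2} - \frac{gH}{(c-\tilde u)^2}\,\partial_x\!\int_0^1 H\,\mathrm{d}\lambda.
\]

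Multiplying by $g$ and integrating in $\lambda$, the first term reproduces $g\,\partial_x\!\int_0^1 H\,\mathrm{d}\lambda$, while the eigenvalue identity $\int_0^1 gH/(c-\tilde u)^2\,\mathrm{d}\lambda = 1$ from \eqref{eq:condition-eigenvalues} is applied once to each of the two remaining integrals, turning them into $-Lc$ and $-g\,\partial_x\!\int_0^1 H\,\mathrm{d}\lambda$ respectively. The latter cancels the first term and the former balances the initial $Lc$ in $L\tilde r$, producing $L\tilde r = Lc - Lc = 0$. The only non-algebraic point is to justify the regularity of $c(t,x)$ and the interchange of $L$ with $\int_0^1\cdot\,\mathrm{d}\lambda$; this is granted by the implicit function argument sketched just before the statement, since $\partial_c F$ is nonzero on $J_\pm$. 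The whole mechanism otherwise rests entirely on the defining integral constraint \eqref{eq:condition-eigenvalues}, which is precisely what makes the $Lc$ contributions telescope.
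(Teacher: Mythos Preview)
Your proof is correct and follows essentially the same route as the paper: both amount to computing $L\tilde r$ by substituting the equations of \eqref{eq:new_system} and then invoking \eqref{eq:condition-eigenvalues}. The only cosmetic difference is that the paper presents the result in the factored form
\[
\partial_t\tilde r + c\,\partial_x\tilde r = \Bigl(Lc + g\int_0^1\partial_x H\,\mathrm{d}\lambda\Bigr)\Bigl[1 - \int_0^1\frac{gH}{(c-\tilde u)^2}\,\mathrm{d}\lambda\Bigr],
\]
so that a single application of the eigenvalue identity kills the second factor, whereas you apply it separately to the two integrals; your three terms are exactly the expansion of this product.
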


\begin{proof}
Let us consider $c=c(t,x)\in\sigma_p(A_0)$. Computing the derivatives in \eqref{eq:r_invariants_sanstopo} and replacing the time derivatives of $H$ and $\tilde{u}$ using \eqref{eq:new_system} leads to:
\[
\partial_{t}\tilde{r}+c\partial_{x}\tilde{r}=\left(\partial_{t}c+c\partial_{x}c+g\int_{0}^{1}\partial_{x}H\mathrm{d}\lambda\right)\left[1-\int_{0}^{1}\frac{gH}{(c-\tilde{u})^{2}}\mathrm{d}\lambda\right].
\]
The result follows using the fact that $c$ satisfies~\eqref{eq:condition-eigenvalues}.

As for the classical hyperbolic Saint-Venant system, the result can also be obtained multiplying the system~\eqref{eq:new_system_vector_1d} on the left by $\bvphi$, where $\bvphi$ is a vector orthogonal to an eigenvector $\bphi$ of $A_0$, see \eqref{eq:redsol_phi_sanstopo1}-\eqref{eq:redsol_phi_sanstopo2}. Then after simple computations, an integration in $\lambda$ on $I=(0,1)$  gives the result.
\end{proof}

Moreover, in view of \cref{prop:vorticity_new}, the vorticity $\tilde{\omega}=\partial_{\lambda}\tilde{u}/H$ satisfies the transport equation:
\[
    \frac{\partial\tilde{\omega}}{\partial t}+\tilde{u}\frac{\partial\tilde{\omega}}{\partial x}=0,
\]
therefore, assuming that $\tilde{\omega}$ is smooth enough, for a fixed value of $\lambda\in I$ one can consider $\tilde{\omega}_{\lambda}(t,x) = \tilde{\omega}(t,x,\lambda)$ as a generalized Riemann invariant associated to the velocity $\tilde{u}_{\lambda}(t,x)=\tilde{u}(t,x,\lambda)$ in $\sigma_{e}(A_{0})$.

Together with the the existence of two real eigenvalues ensured by \cref{prop:limit_case}, we deduce the following result.
\begin{corollary}
If $\boldsymbol{\tilde{U}}$ is an enough regular solution to \eqref{eq:new_system_vector_1d}, let $c_\pm=c_\pm(t,x)$ denote the two real eigenvalues given by \cref{prop:limit_case}. We have the following list of equations for any $\lambda\in I$:
\begin{align*}
\frac{\partial r_{\pm}}{\partial t} + c_{\pm}\frac{\partial r_{\pm}}{\partial x} &= 0,&
\frac{\partial \tilde{\omega}_{\lambda}}{\partial t} + \tilde{u}_{\lambda} \frac{\partial \tilde{\omega}_{\lambda}}{\partial x} &= 0,
\end{align*}
where the Riemann invariants are defined by:
\begin{align*}
r_{\pm} &= c_{\pm} - g\int_0^1 \frac{H}{\tilde u - c_{\pm}}\mathrm{d}\lambda, &
\tilde{\omega}_{\lambda}(t,x) &= \tilde{\omega}(t,x,\lambda) = \frac{\partial_\lambda\tilde{u}}{H}, &
\tilde{u}_{\lambda}(t,x) &= \tilde{u}(t,x,\lambda).
\end{align*}
\end{corollary}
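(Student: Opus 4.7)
The plan is to combine three previously established results: \cref{prop:limit_case} for the existence of $c_\pm$, \cref{prop:riemann_inv} for the transport identity satisfied by $r_\pm$, and \cref{prop:vorticity_new} for the vorticity equation.

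First, I would invoke \cref{prop:limit_case} to guarantee the existence of two real eigenvalues $c_\pm(t,x)$ of $A_0(\tilde{\boldsymbol{U}}(t,x,\cdot))$, each belonging to the respective interval $J_\pm$ defined in \eqref{eq:def_J_pm}, under whichever of the two Hölder regularity assumptions holds on $\tilde{u}$. Because $c_\pm \notin \tilde{u}[I]$ and the integral condition \eqref{eq:condition-eigenvalues} is satisfied, the implicit function theorem (using that $\partial_c F \neq 0$ on $J_\pm$, as discussed in the paragraph preceding \cref{prop:riemann_inv}) yields that $c_\pm$ inherits enough regularity in $(t,x)$ from $\tilde{\boldsymbol{U}}$ for the subsequent computations to be justified. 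In particular $c_\pm \in \sigma_p(A_0)$ pointwise in $(t,x)$, which puts us in the setting of \cref{prop:riemann_inv}.

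Next, applying \cref{prop:riemann_inv} with $c=c_+(t,x)$ and then with $c=c_-(t,x)$ directly yields the transport identities
\[
    \frac{\partial r_{\pm}}{\partial t} + c_{\pm}\frac{\partial r_{\pm}}{\partial x} = 0,
\]
for the two scalar quantities $r_\pm = c_\pm - g\int_0^1 H/(\tilde{u}-c_\pm)\,\mathrm{d}\lambda$. No further computation is needed here beyond specialization of the previous proposition.

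Finally, for the vorticity invariants, I would recall from \cref{prop:vorticity_new} that in one dimension $\tilde{\omega}(t,x,\lambda) = \partial_\lambda \tilde{u}/H$ satisfies $\partial_t\tilde{\omega} + \tilde{u}\partial_x\tilde{\omega}=0$ pointwise in $(t,x,\lambda)$. Since this equation contains no derivative in $\lambda$, freezing $\lambda\in I$ gives exactly the stated transport equation for $\tilde{\omega}_\lambda$ with transport velocity $\tilde{u}_\lambda$, finishing the proof. There is no genuine obstacle here: the statement is a bookkeeping corollary that packages the two types of generalized Riemann invariants (the two discrete ones associated with $c_\pm$, and the continuum of vorticity invariants parameterized by $\lambda\in I$ which correspond to the essential spectrum $\sigma_e(A_0)=\tilde{u}(\bar{I})$) obtained earlier.
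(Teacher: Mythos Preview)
Your proof is correct and matches the paper's approach exactly: the corollary is stated in the paper without a dedicated proof, being presented as an immediate consequence of \cref{prop:limit_case}, \cref{prop:riemann_inv}, and \cref{prop:vorticity_new}, which is precisely the combination you invoke.
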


The previous result is an attempt to somehow diagonalize \eqref{eq:new_system_vector_1d} and is consistent with the decomposition of the spectrum given by \cref{cor:spectrum}: the two real eigenvalues $c_{\pm}$ in $\sigma_d(A_0)$ lead to two equations, and the continuous spectrum $\tilde{u}(t,x,I)$ to an infinite number of equations parameterized by $\lambda$.
However, in the case where complex eigenvalues exist in $\sigma_d(A_0)$, it seems quite complicated to understand and justify how the previous list of Riemann invariant equations are equivalent to the original system \eqref{eq:new_system_vector_1d}.

\begin{remark}
We note that Besse diagonalized in \cite{Besse2011} a system very similar to \eqref{eq:new_system_vector_1d}, mainly when $A_0$ is replaced by
\[
    A_{0}(\tilde{\boldsymbol{U}})=\begin{pmatrix}
        \tilde{u} & H\\
        \frac{H}{4}+g\int_{0}^{1}\cdot\,\mathrm{d}\lambda & \tilde{u}
    \end{pmatrix}.
\]
It would be very interesting to see if the techniques he developed could also be applied in our case.
\end{remark}

\begin{remark}
Riemann invariants are often used to build particular solutions of the considered system as in the construction of simple waves (see for instance \cite{teshukov1997}). This has not been investigated in this paper but this could help to understand the behavior of the system~\eqref{eq:new_system_vector_1d} in valuable situations.
\end{remark}

\subsection{Case with variable topography}\label{subsec:avectopo}
In the classical Saint-Venant system, one way to deal with non trivial topography allowing to obtain an hyperbolic system is to add $z_b$ as a variable together with the equation $\partial_{t}z_b=0$, see for example \cite{godlewski-book, gosse1996, roux1996}.
For $d=2$ and by following the same idea, we can rewrite \eqref{eq:new_system_vector} coupled with topography $z_b(t,\bx)$ satisfying $\partial_{t}z_b=0$:
\begin{equation}\label{eq:new_system_vector_topo}
    \frac{\partial\boldsymbol{\hat U}}{\partial t}+B_{1}(\boldsymbol{\hat U})\frac{\partial\boldsymbol{\hat U}}{\partial x}+B_{2}(\boldsymbol{\hat U})\frac{\partial\boldsymbol{\hat U}}{\partial y}=0,
\end{equation}
where $\boldsymbol{\hat U}=(H,\tilde{\boldsymbol{u}},z_b)^{T}$ and
\begin{align*}
    B_{1}(\boldsymbol{\hat U}) & =\begin{pmatrix}
        \tilde{u} & H & 0 & 0\\
        g\int_{0}^{1}\cdot \,\mathrm{d}\lambda & \tilde{u} & 0 & g\\
        0 & 0 & \tilde{u} & 0\\
        0 & 0 & 0 & 0
    \end{pmatrix}, &
    B_{2}(\boldsymbol{\hat U}) & =\begin{pmatrix}
        \tilde{v} & 0 & H & 0\\
        0 & \tilde{v} & 0 & 0\\
        g\int_{0}^{1}\cdot \,\mathrm{d}\lambda & 0 & \tilde{v} & g\\
        0 & 0 & 0 & 0
    \end{pmatrix},
\end{align*}
are non-local operators acting on functions in the variable $\lambda$. Hence, we will again write $\tilde{\boldsymbol{u}}(\lambda)$ for the sake of simplicity. As in the case of constant topography presented above, system \eqref{eq:new_system_vector_topo} is invariant by rotation along the vertical axis and by a similar argument, this motivates the analysis of the $3\times 3$ matrix,
\[
    B_0(\bar U) =\begin{pmatrix}
        \tilde{u} & H & 0\\
        g\int_{0}^{1}\cdot\,\mathrm{d}\lambda & \tilde{u} & g\\
        0 & 0 & 0
    \end{pmatrix}.
\]
Provided that $\tilde{u}\in L^{\infty}(I)$ and $H\in L^{\infty}(I)$, $B_0(\bar U):L^{2}(I)^{3}\rightarrow L^{2}(I)^{3}$ is a bounded operator. We shall consider the following system corresponding to the case $d=1$:
\[
    \frac{\partial \bar U}{\partial t} + B_0(\bar U) \frac{\partial \bar U}{\partial x} = 0,
    \quad \text{where} \quad
    \bar U =\begin{pmatrix}
        H\\
        \tilde{u}\\
        z_b
    \end{pmatrix}.
\]

In this case the spectrum decomposition is not changed much, except that zero is always in the point spectrum, similarly to the fact that zero is an eigenvalue for the Saint-Venant system with topography.

\begin{theorem}\label{thm:spectrum-topo}
 If $\tilde{u}\in C^{0,1/4}(\bar{I})$, $H\in C^{0}(\bar{I})$ and $H>0$ on $\bar{I}$, then the
 spectrum of $B_0(\bar U):L^{2}(I)^{3}\to L^{2}(I)^{3}$ is characterized by
\begin{align*}
 \sigma_{p}(B_0) & = \{0\} \cup \left\{ c\in\mathbb{C}\setminus\tilde{u}(\bar{I}) : \int_{0}^{1}\frac{gH}{(c-\tilde{u})^{2}}\mathrm{d}\lambda=1\right\} \cup\left\{ c\in\tilde{u}(\bar{I})\setminus \{0\}\,:\:\meas\bigl(\tilde{u}^{-1}(c)\bigr)>0\right\},\\
\sigma_{c}(B_0) & =\left\{ c\in\tilde{u}(\bar{I})\setminus \{0\}\,:\:\meas\bigl(\tilde{u}^{-1}(c)\bigr)=0\right\},\\
\sigma_{r}(B_0) & =\emptyset,
\end{align*}
where the definitions of the point spectrum $\sigma_{p}(B_{0})$,
continuous spectrum $\sigma_{c}(B_{0})$, and residual spectrum $\sigma_{r}(B_{0})$ are given in \cref{sec:appendix}.
\end{theorem}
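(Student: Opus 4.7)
The plan is to reduce the spectral analysis of $B_0$ to that of $A_0$ already carried out in \cref{thm:spectrum}. The key observation is that the third row of $B_0-c\mathbf{I}$ acts simply as multiplication by $-c$ on $\phi_3$, so it is degenerate only at $c=0$. The extra structure of $B_0$ compared to $A_0$ is therefore concentrated at that single point, which explains the appearance of $\{0\}$ in the claimed point spectrum.

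First I would treat $c\neq 0$. Writing $\boldsymbol{\psi}=(\phi_1,\phi_2,\phi_3)$ and $\boldsymbol{b}=(b_1,b_2,b_3)$ in $L^2(I)^3$, the third row of $(B_0-c\mathbf{I})\boldsymbol{\psi}=\boldsymbol{b}$ determines $\phi_3=-b_3/c$ uniquely, and substituting this into the second row gives
\[
    (A_0-c\mathbf{I})\begin{pmatrix}\phi_1\\\phi_2\end{pmatrix}=\begin{pmatrix}b_1\\b_2+gb_3/c\end{pmatrix}.
\]
Consequently $\ker(B_0-c\mathbf{I})$ is in bijection with $\ker(A_0-c\mathbf{I})$ (via $\phi_3=0$), and since $(b_1,b_2,b_3)\mapsto(b_1,b_2+gb_3/c)$ is a continuous linear surjection of $L^2(I)^3$ onto $L^2(I)^2$ with a continuous section, both surjectivity and density of the range transfer in both directions between $B_0-c\mathbf{I}$ and $A_0-c\mathbf{I}$. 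The description of $\sigma_p(B_0)$, $\sigma_c(B_0)$ and $\sigma_r(B_0)$ restricted to $\mathbb{C}\setminus\{0\}$ then follows directly from \cref{thm:spectrum}; in particular $\sigma_r(B_0)\cap(\mathbb{C}\setminus\{0\})=\emptyset$.

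Next I would handle $c=0$ by exhibiting an explicit element of $\ker B_0$. Choosing for instance $\phi_1\equiv 1$, $\phi_2=-\tilde{u}/H$ and $\phi_3=\tilde{u}^2/(gH)-1$, a direct verification shows that $B_0(\phi_1,\phi_2,\phi_3)^T=0$, and the hypotheses $\tilde{u}\in C^{0,1/4}(\bar I)\subset L^\infty(I)$ together with $H\in C^0(\bar I)$, $H>0$ on the compact set $\bar I$ (hence $\inf_{\bar I}H>0$) guarantee that this triple lies in $L^2(I)^3$. Therefore $0\in\sigma_p(B_0)$, which both produces the extra $\{0\}$ in the formula for $\sigma_p$ and justifies the exclusion of $0$ from $\sigma_c(B_0)$ and $\sigma_r(B_0)$.

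The argument is essentially algebraic and presents no serious obstacle. The only delicate point is verifying that the explicit kernel element at $c=0$ truly lies in $L^2(I)^3$, which is precisely why the hypothesis $H>0$ on the compact interval $\bar I$ is needed. Assembling the two cases reproduces the claimed decomposition of the spectrum.
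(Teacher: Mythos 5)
Your proof is correct, and your reduction is cleaner than what the paper had in mind. The authors simply remark that the proof is ``very similar to the proof of \cref{thm:spectrum}'' with a case distinction $c=0$ versus $c\neq 0$, which suggests re-running the whole three-step analysis (injectivity, surjectivity, density of the range) for the $3\times 3$ operator. You instead observe that for $c\neq 0$ the third row forces $\phi_3=-b_3/c$, so that the equation $(B_0-c\mathbf{I})\boldsymbol{\psi}=\boldsymbol{b}$ is literally equivalent to $(A_0-c\mathbf{I})(\phi_1,\phi_2)^T=(b_1,b_2+gb_3/c)^T$ together with $\phi_3=-b_3/c$. Since the map $T:(b_1,b_2,b_3)\mapsto(b_1,b_2+gb_3/c)$ is a continuous linear surjection of $L^2(I)^3$ onto $L^2(I)^2$ and $\mathrm{range}(B_0-c\mathbf{I})=T^{-1}\bigl(\mathrm{range}(A_0-c\mathbf{I})\bigr)$, injectivity, surjectivity, and density of the range all transfer back and forth, and the classification of $\sigma_p$, $\sigma_c$, $\sigma_r$ away from zero is literally inherited from \cref{thm:spectrum} with no further work. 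Your explicit kernel element at $c=0$ (take $\phi_1\equiv 1$, $\phi_2=-\tilde{u}/H$, $\phi_3=\tilde{u}^2/(gH)-1$; the three rows vanish identically, and boundedness of $\tilde{u}$ together with $\inf_{\bar I}H>0$ keeps it in $L^2(I)^3$) then puts $0\in\sigma_p(B_0)$, which settles the classification at $c=0$ since $\sigma_p$, $\sigma_c$, $\sigma_r$ are disjoint. The only thing worth making explicit, which you leave implicit, is that $T^{-1}$ of a dense set is dense because $T$ is a continuous open linear surjection; but that is a one-line verification. Overall your argument reaches the same conclusion by a more structural route and would be a worthwhile replacement for the paper's omitted proof.
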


The proof is very similar to the proof of \cref{thm:spectrum} and is omitted. The only change is that we have to distinguish the cases $c=0$ and $c\neq0$. In addition \cref{prop:spectrum_localization,prop:limit_case,prop:u_convexe} also hold with completely similar proofs.

In the shallow water regime, the spectrum reduces as explained in \cref{rem:spectrum_saint_venant} however with zero as an addition eigenvalue, like for the classical Saint-Venant system with topography.

The Riemann invariants  corresponding to the eigenvalues $c\in\sigma_p(B_0)$ are slightly modified to take into account the topography.
\begin{proposition}\label{prop:riemann_inv_avec_topo}
Let $c=c(t,x)\in\sigma_p(B_0)$ be a solution of \eqref{eq:condition-eigenvalues}, then the relation
\[
    \frac{\partial}{\partial t} \left( c - g\int_0^1 \frac{H}{\tilde u - c}\mathrm{d}\lambda - g z_b\right) + c\frac{\partial}{\partial x} \left( c - g\int_0^1 \frac{H}{\tilde u - c}\mathrm{d}\lambda - g z_b\right)= 0,
\]
holds and the quantity $\bar r = \bar r(t,x)$ with
\[
    \bar r = c - g\int_0^1 \frac{H}{\tilde u - c}\mathrm{d}\lambda - g z_b,
\]
can be seen as a generalized Riemann invariant associated to the eigenvalue $c=c(t,x)$.
\end{proposition}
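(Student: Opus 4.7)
The strategy is to mirror the direct calculation used in Proposition~\ref{prop:riemann_inv}, treating $-gz_b$ as a correction that accounts for the topography source $-g\partial_x z_b$ appearing on the right-hand side of the momentum equation in the augmented system~\eqref{eq:new_system_vector_topo}. Setting $J = \int_0^1 H/(\tilde u - c)\,\mathrm{d}\lambda$, so that $\bar r = c - gJ - gz_b$, and exploiting $\partial_t z_b = 0$ (the third scalar equation of the augmented system), I would first expand
\[
    \partial_t \bar r + c\partial_x \bar r = (\partial_t c + c\partial_x c) - g(\partial_t + c\partial_x)J - gc\partial_x z_b.
\]

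Next, I would apply the operator $\partial_t + c\partial_x$ to the integrand $H/(\tilde u - c)$ pointwise in $\lambda$ via the quotient rule, and eliminate $\partial_t H$ and $\partial_t \tilde u$ using the continuity equation $\partial_t H + \partial_x(H\tilde u) = 0$ and the momentum equation $\partial_t \tilde u + \tilde u\partial_x \tilde u + g\partial_x\int_0^1 H\,\mathrm{d}\lambda + g\partial_x z_b = 0$. The momentum substitution is the key new ingredient relative to the topography-free case: it brings the extra term $-g\partial_x z_b$ inside the integrand. After the $\partial_x \tilde u$ contributions cancel, exactly as in the no-topography proof, I would integrate in $\lambda$ and invoke the defining relation $\int_0^1 gH/(c-\tilde u)^2\,\mathrm{d}\lambda = 1$. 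The expected outcome is
\[
    g(\partial_t + c\partial_x)J = (\partial_t c + c\partial_x c) + g\partial_x z_b,
\]
so that substituting back into Step~1 makes the $(\partial_t c + c\partial_x c)$ terms cancel, leaving only topography contributions.

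The main obstacle I anticipate lies in this final cancellation: the direct computation produces $-g\partial_x z_b$ from the $J$-derivative together with $-gc\partial_x z_b$ from the explicit $-gz_b$ term in $\bar r$, and pairing these with $\partial_t z_b = 0$ alone does not obviously close the argument. I expect that the cancellation must be completed by exploiting the third column $(0,g,0)^T$ of $B_0$ beyond the mere identity $\partial_t z_b = 0$, presumably through the alternative derivation mentioned in the proof of Proposition~\ref{prop:riemann_inv}: multiply the augmented system on the left by the suitable generalized eigenvector of $B_0$, whose third component (scaling like $g/c$) is precisely what couples the $z_b$-equation to the $\lambda$-integrated equations and should absorb the residual topography term. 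As a sanity check I would first specialize to the shallow-water limit, where $\bar r$ reduces to $\tilde u \pm 2\sqrt{gh} - gz_b$, to calibrate the signs before attempting the general argument.
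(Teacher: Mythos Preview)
Your approach mirrors the paper's strategy for the topography-free case (Proposition~\ref{prop:riemann_inv}), which is indeed the intended route: the paper offers no separate proof for this proposition, relying implicitly on the same computation with the momentum equation replaced by the one carrying the source $-g\partial_x z_b$.

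You are right to anticipate trouble, and in fact the obstacle you flag is genuine rather than a bookkeeping issue. Carrying your computation to the end (exactly along the lines of the proof of Proposition~\ref{prop:riemann_inv}) yields
\[
\partial_t \bar r + c\,\partial_x \bar r
= \left(\partial_t c + c\,\partial_x c + g\,\partial_x\!\int_0^1 H\,\mathrm{d}\lambda + g\,\partial_x z_b\right)\left[1 - \int_0^1 \frac{gH}{(c-\tilde u)^2}\,\mathrm{d}\lambda\right] - g(1+c)\,\partial_x z_b,
\]
so that after invoking the eigenvalue condition~\eqref{eq:condition-eigenvalues} a residual $-g(1+c)\,\partial_x z_b$ survives. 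Your own shallow-water sanity check confirms this independently: for Saint-Venant with topography one has $(\partial_t + (u\pm\sqrt{gh})\partial_x)(u\pm 2\sqrt{gh}) = -g\,\partial_x z_b$, and subtracting $gz_b$ from $u\pm 2\sqrt{gh}$ does not remove the source but rather adds a further $-gc\,\partial_x z_b$. Note also that the term $gz_b$ carries units of velocity squared, whereas $c$ and $gJ$ carry units of velocity, so the expression for $\bar r$ is dimensionally inhomogeneous as written.

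The left-eigenvector route you propose cannot rescue the identity for $\bar r$ as stated, since the shallow-water check already falsifies it at the level of the claimed formula. What that route \emph{would} give is the correct characteristic relation $(\partial_t + c\,\partial_x)\tilde r = -g\,\partial_x z_b$ for $\tilde r = c - gJ$, which is the natural analogue of the Saint-Venant situation: the topography enters as a source along the characteristics, and no algebraic modification of $\tilde r$ by a function of $z_b$ alone turns it into an exact invariant. In short, your computation is sound; the discrepancy lies in the statement, not in your method.
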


\begin{remark}
For $c=0$, the associated Riemann invariant is trivially $z_b$ as $\partial_{t}z_b=0$. However in case $c=0$ is solution of \eqref{eq:condition-eigenvalues}, another Riemann invariant given by the previous proposition might exist.
\end{remark}

\section{A multilayer approach} \label{sec:multi_layer}

In order to better understand the behavior of the eigenvalues in the interval $\tilde{u}(\bar{I})$, and thinking about a possible numerical approximation, we will use a multilayer discretization of the vertical domain. This approach was introduced by Audusse et al. (see~\cite{JSM_M2AN}) in order to access the vertical variation of the horizontal velocity profile of the Euler system. Following this approach we consider a piecewise constant approximation of the velocity $\tilde{u}$ in the variable $\lambda\in \bar{I}$ and we show in \cref{prop:discrete_eigen} that for a monotone velocity profile and for a large number of horizontal layers we recover only two real eigenvalues $c_{\pm}$ corresponding to system \eqref{eq:new_system_vector_1d} under the assumptions of \cref{prop:limit_case}.
Note that a piecewise linear approximation (discontinuous and continuous) was introduced in~\cite{chesnokov2017}.

The vertical coordinate plays a particular role in geophysical flow models. For the numerical approximation of the Navier--Stokes or Euler equations with free surface, multilayer models have been proposed~\cite{audusse2005,JSM_JCP,BDGSM,chesnokov2017,fernandeznieto} consisting in a piecewise constant approximation of the horizontal velocity field along the vertical axis. Efficient and robust numerical schemes endowed with strong stability properties (well-balancing, discrete maximum principle, discrete entropy inequality) can be obtained to approximate these multilayer models~\cite{art_3d}. In this section we propose and study a multilayer version of the model~\eqref{eq:new_system}. In contrast to other multilayer schemes for other problems, in the present case, the piecewise constant approximation is exact, \emph{i.e.}, the truncation error is zero.
The objective is to examine the eigenvalues of this multilayer version of the model~\eqref{eq:new_system}.

\medskip
In this section, we only consider $d=1$.
As described by \cref{fig:multi_lambda}, we consider a discretization of the fluid domain $\tilde\Omega=\R\times I$ into $N$ different layers. The layer with index $\alpha$ contains the points of coordinates $(x,\lambda)$ with $\lambda\in {L}_{\alpha}=[\lambda_{\alpha-1/2},\lambda_{\alpha+1/2}]$ with
\[
    0=\lambda_{1/2}<\lambda_{3/2}<\cdots<\lambda_{N+1/2}=1.
\]
For $\alpha =1,\dots,N$, the width of the layer $\alpha$ is given by
\[
    \gamma_\alpha = \lambda_{\alpha+1/2}-\lambda_{\alpha-1/2},
\]
with $\sum_{\beta=1}^N \gamma_\beta = 1$. Note that each $\lambda_{\alpha+1/2}$ is a constant corresponding to the interface between the two layers  $\alpha$ and $\alpha+1$ with 
\[
    \lambda_{\alpha+1/2}=\sum_{\beta=1}^\alpha \gamma_{\beta}.
\]

For a given $X: \tilde\Omega\rightarrow\R$, we consider its $\mathbb{P}_0-$approximation in $\lambda$ having the form:
\[
    X^N(t,x,\lambda) := \sum_{\alpha=1}^N\1_{\lambda \in L_\alpha}(\lambda) X_\alpha(t,x),
\]
where $X_\alpha(t,x)$ defined by
\begin{align}\label{eq:p0_app}
X_\alpha(t,x)=\displaystyle\frac{1}{\gamma_\alpha}\int_{\lambda_{\alpha-1/2}}^{\lambda_{\alpha+1/2}} X(t,x,\lambda)\,\mathrm{d}\lambda,
\end{align}
is the average of $X(t,x,\lambda)$ over the layer $L_\alpha$.
We will denote by $\boldsymbol{X}_N$ the vector $(X_1,\dots,X_N)^T$. Then the following proposition holds.

\begin{figure}
\centering
\begin{tikzpicture}[xscale=1.5,yscale=1]
    \draw[thick,->] (0,0) -- (5,0) node[right] {$x$};
    \draw[thick, -] (0,3) -- (5,3) node[right] {$\lambda_{N+1/2}$};
    \draw[thin, dashed] (0,2.5) -- (5,2.5);
    \draw[thin, dashed] (0,2) -- (5,2) node[right] {$\lambda_{\alpha+1/2}$};
    \fill[nearly transparent](0,1.5) rectangle (5,2);
    \draw (2,1.75) node[right]{$\gamma_{\alpha}=        \lambda_{\alpha+1/2}-\lambda_{\alpha-1/2}$};
    \draw[thick, <->] (1.8,1.5) -- (1.8,2);
    \draw[thin, dashed] (0,1.5) -- (5,1.5) node[right] {$\lambda_{\alpha-1/2}$};
    \draw[thin, dashed] (0,1) -- (5,1);
    \draw[thin, dashed](0,.5) -- (5,.5);
    \draw[thick,->] (0,0) -- (0,4) node[right] {$\lambda$};
    \draw (0,0) node[left] {$0$};
    \draw (0,3) node[left] {$1$};
\end{tikzpicture}
\caption{Multilayer discretization of $\tilde\Omega$}
\label{fig:multi_lambda}
\end{figure}

\begin{proposition}
Let $(H,\tilde u)$ be a solution of the system~\eqref{eq:new_system} completed with initial conditions \eqref{eq:new_system_init}. For $\alpha=1,..., N$, the multilayer formulation given by
\begin{equation} \label{eq:sv_change}
    \left\{
    \begin{aligned}
        & \frac{\partial H_\alpha}{\partial t} + \partial_x (H_\alpha\tilde u_\alpha) = 0,\\
        & \frac{\partial \tilde u_\alpha}{\partial t} + \tilde u_\alpha \partial_x \tilde u_\alpha + g \partial_{x} \sum_{\beta=1}^N \gamma_\beta H_\beta = -g\partial_x z_b,\\
    \end{aligned}
    \right.
\end{equation}
with initial conditions:
\begin{equation} \label{eq:init_msv}
    \left\{
    \begin{aligned}
       H_\alpha(0,x) &= \frac{1}{\gamma_\alpha}\int_{\lambda_{\alpha-1/2}}^{\lambda_{\alpha +1/2}} H(0,x,\lambda)\,\mathrm{d}\lambda,\\
       \tilde{u}_\alpha(0,x) &= \frac{1}{\gamma_\alpha}\int_{\lambda_{\alpha-1/2}}^{\lambda_{\alpha +1/2}} \tilde u(0,x,\lambda)\,\mathrm{d}\lambda,
    \end{aligned}
    \right.
\end{equation}
is the $\mathbb{P}_0$ Galerkin approximation of \eqref{eq:new_system}.
\label{prop:model_mc}
\end{proposition}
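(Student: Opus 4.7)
The plan is to make explicit what a $\mathbb{P}_0$ Galerkin approximation of \eqref{eq:new_system} with respect to the partition $\{L_\alpha\}_{\alpha=1}^{N}$ of $I$ means, and then to observe that after substitution and testing, the resulting equations reduce algebraically to \eqref{eq:sv_change}. The Galerkin framework takes trial and test functions in the finite-dimensional space $V_N=\mathrm{span}\{\1_{L_\alpha}\}_{\alpha=1}^{N}\subset L^2(I)$, so I would look for
\[
    H^N(t,x,\lambda)=\sum_{\alpha=1}^{N}H_\alpha(t,x)\1_{L_\alpha}(\lambda),\qquad \tilde{u}^N(t,x,\lambda)=\sum_{\alpha=1}^{N}\tilde{u}_\alpha(t,x)\1_{L_\alpha}(\lambda),
\]
and require that, after substitution into \eqref{eq:new_system}, the residual of each equation (viewed as an $L^2(I)$-valued function of $\lambda$) is orthogonal to every $\1_{L_\alpha}$. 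Concretely, this amounts to integrating each equation against $\1_{L_\alpha}/\gamma_\alpha$ for $\alpha=1,\dots,N$.

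The key observation, which removes any truncation error, is that products, $x$-derivatives and the non-local integral appearing in \eqref{eq:new_system} all preserve the $\mathbb{P}_0$ structure with respect to the fixed partition. Indeed, on the layer $L_\alpha$ one has $H^N\tilde{u}^N=H_\alpha\tilde{u}_\alpha$ and $\tilde{u}^N\partial_x\tilde{u}^N=\tilde{u}_\alpha\partial_x\tilde{u}_\alpha$, whereas
\[
    \int_0^1 H^N(t,x,\lambda')\,\mathrm{d}\lambda'=\sum_{\beta=1}^{N}\gamma_\beta H_\beta(t,x)
\]
is in fact constant in $\lambda$. Consequently, testing \eqref{eq:new_system}\subeq{1} against $\1_{L_\alpha}/\gamma_\alpha$ yields directly $\partial_t H_\alpha+\partial_x(H_\alpha\tilde{u}_\alpha)=0$, and testing \eqref{eq:new_system}\subeq{2} gives $\partial_t\tilde{u}_\alpha+\tilde{u}_\alpha\partial_x\tilde{u}_\alpha+g\partial_x\sum_{\beta=1}^{N}\gamma_\beta H_\beta=-g\partial_x z_b$, which are exactly the two equations of \eqref{eq:sv_change}.

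For the initial data, the $L^2(I)$-orthogonal projection of $H(0,\cdot)$ and $\tilde u(0,\cdot)$ onto $V_N$ with the indicator basis is precisely the layerwise average defined by \eqref{eq:p0_app}, which is exactly \eqref{eq:init_msv}. There is essentially no obstacle in this proof: the only point to make explicit is that the integral term in the momentum equation depends only on $x$, so that the Galerkin projection creates no additional coupling between layers beyond the global sum $\sum_\beta \gamma_\beta H_\beta$; once this is observed, the identification with \eqref{eq:sv_change} is merely a matter of reading off coefficients.
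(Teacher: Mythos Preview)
Your proposal is correct and follows essentially the same approach as the paper: substitute the $\mathbb{P}_0$ Ansatz, test against the indicator functions $\1_{L_\alpha}$, and read off \eqref{eq:sv_change} layer by layer. The paper presents it in the reverse order (multiply \eqref{eq:new_system} by $\1_{L_\alpha}$, integrate in $\lambda$, then replace $H,\tilde u$ by their $\mathbb{P}_0$ approximations), but the computation and the key observation --- that the nonlinearities and the integral term commute with the $\mathbb{P}_0$ projection --- are identical to yours.
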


\begin{proof}
The one-dimensional form of equations \eqref{eq:new_system} reads 
\begin{align*}
& \frac{\partial H}{\partial t} + \partial_x (H\tilde u)
 = 0,
& \frac{\partial \tilde u}{\partial t } + \tilde u \partial_x \tilde u
      + g\partial_x \int_0^1 H \mathrm{d}\lambda = -g \partial_x z_b.
 \end{align*}
Multiplying these equations by $\1_\alpha = \1_{\lambda\in L_{\alpha}}$ and integrating over $\lambda\in I$, we get
\begin{align*}
& \frac{\partial}{\partial t} \int_0^1 H\1_\alpha \mathrm{d}\lambda + \partial_x \int_0^1 H\tilde u\1_\alpha \mathrm{d}\lambda
 = 0,\\
& \frac{\partial }{\partial t } \int_0^1 \tilde u\1_\alpha \mathrm{d}\lambda + \frac{1}{2}\partial_x \int_0^1 \tilde u^2 \1_\alpha \mathrm{d}\lambda 
      + g \gamma_\alpha\partial_x \int_0^1 H \mathrm{d}\lambda = -g \gamma_\alpha\partial_x  z_b.
\end{align*}
Replacing $H$ and $\tilde u$ by their approximations defined by~\eqref{eq:p0_app}, we recover the multilayer formulation under the form \eqref{eq:sv_change} for $\alpha=1,\ldots,N$.
\end{proof}

The following corollary emphasizes the interest of the multilayer model for a class of piecewise constant functions.
\begin{corollary} \label{cor:multi_layer_exact}
Let us consider $(H,\tilde u)$ defined by
\begin{align*}
    H &= \sum_{\alpha=1}^N\1_{\lambda \in L_\alpha}(\lambda) H_\alpha(t,x), &
    \tilde u &= \sum_{\alpha=1}^N\1_{\lambda \in L_\alpha}(\lambda) \tilde u_\alpha(t,x).
\end{align*}
Then $(H,\tilde u)$ are solution of the system~\eqref{eq:new_system} if and only if $(\boldsymbol{H}_N,\tilde{\boldsymbol{U}}_N)$ defined by $\boldsymbol{H}_N=(H_1,\ldots,H_N)^T$ and $\tilde{\boldsymbol{U}}_N=(\tilde u_1,\ldots,\tilde u_N)^T$ are solution of the system~\eqref{eq:sv_change} with initial data \eqref{eq:init_msv}.
\end{corollary}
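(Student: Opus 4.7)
The plan is to exploit the algebraic identity $\1_\alpha \1_\beta = \delta_{\alpha\beta} \1_\alpha$ for the layer indicators, which makes products and powers of piecewise constant functions in $\lambda$ again piecewise constant on the same partition, so that no truncation occurs when we substitute these Ansätze into \eqref{eq:new_system}. In particular, I will first observe that for our $H$ and $\tilde u$,
\[
    H \tilde u = \sum_{\alpha=1}^{N} \1_{\lambda \in L_\alpha} H_\alpha \tilde u_\alpha,
    \qquad
    \tilde u\, \partial_x \tilde u = \sum_{\alpha=1}^N \1_{\lambda \in L_\alpha} \tilde u_\alpha\, \partial_x \tilde u_\alpha,
\]
while the coupling term is $\lambda$-independent:
\[
    \int_0^1 H(t,x,\lambda')\,\mathrm{d}\lambda' = \sum_{\beta=1}^N \gamma_\beta H_\beta(t,x).
\]

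Next, plugging these expressions into the one-dimensional version of \eqref{eq:new_system}, both evolution equations take the form $\sum_{\alpha=1}^N \1_{\lambda \in L_\alpha}(\lambda)\, E_\alpha(t,x) = 0$, where $E_\alpha$ is respectively the mass and momentum residual of \eqref{eq:sv_change} for layer $\alpha$. Since the indicators $\1_{\lambda \in L_\alpha}$ have pairwise disjoint supports of positive measure, the vanishing of such a sum (almost everywhere in $\lambda$) is equivalent to $E_\alpha(t,x) = 0$ for every $\alpha = 1,\dots,N$ and every $(t,x)$. This yields the equivalence of the bulk equations.

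For the initial data, I will note that if $H(0,\cdot)$ and $\tilde u(0,\cdot)$ are already piecewise constant of the prescribed form, then the averages prescribed by \eqref{eq:init_msv} return precisely the constants $H_\alpha(0,x)$ and $\tilde u_\alpha(0,x)$, so the two sets of initial conditions are identical. Conversely, given $(\boldsymbol H_N, \tilde{\boldsymbol U}_N)$ solving \eqref{eq:sv_change}--\eqref{eq:init_msv}, the piecewise constant reconstructions inherit the correct initial values by the same averaging identity.

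I do not anticipate a genuine obstacle: the result is essentially a bookkeeping statement, and the only point that requires a little care is to verify that $\tilde u \cdot \partial_x \tilde u$ and $H \tilde u$ really do remain piecewise constant in $\lambda$ (which would fail for, e.g., nonlinearities that mix different layers, but not here thanks to $\1_\alpha \1_\beta = \delta_{\alpha\beta}\1_\alpha$). The interest of the corollary lies not in the proof but in the observation that the $\mathbb{P}_0$ Galerkin projection of \cref{prop:model_mc} is exact on the subspace of piecewise constant profiles, which justifies studying the discrete spectrum of \eqref{eq:sv_change} as an honest restriction of the continuous problem rather than as an approximation.
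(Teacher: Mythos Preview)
Your argument is correct and is essentially the same as the paper's: the paper simply notes that the $\mathbb{P}_0$ approximation commutes with the nonlinearities and the $\lambda$-integration, which is precisely the content of your identities $\1_\alpha\1_\beta=\delta_{\alpha\beta}\1_\alpha$ and $\int_0^1 H\,\mathrm{d}\lambda=\sum_\beta\gamma_\beta H_\beta$. You have spelled out in detail what the paper compresses into one line.
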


\begin{proof}
The corollary is deduced directly from the previous proof of \cref{prop:model_mc}, the approximation commuting with the nonlinearities and integration.
\end{proof}

We remark that system \eqref{eq:sv_change} can be viewed as the Saint-Venant equations in each layer with some coupling through the summation term. In particular the interaction is between all the layers and not only adjacent ones; in fact there is no exchange term between the layers and the coupling is exactly the same for all the layers.

Interestingly, in the multilayer case, we can go back to the original
domain $\Omega_{t}$ as explained in \cref{thm:reciproque} explicitly.
\begin{proposition} \label{prop:multi_layer_back}
    The solution $(H,\tilde{u})$ given by \cref{cor:multi_layer_exact}
    in $\tilde{\Omega}$ corresponds to the solution $(\eta,u)$ in $\Omega_t$ defined by
    \begin{align*}
        \eta(t,x) & =z_{b}(x)+\sum_{\alpha=1}^{N}\gamma_{\alpha}H_{\alpha}(t,x),&
        u(t,x,z) & =\sum_{\alpha=1}^{N}\1_{z\in M_{\alpha}(t,x)}(z)\tilde{u}_{n}(t,x),
    \end{align*}
    where $M_{\alpha}(t,x)$ denotes the following moving layer
    \[
        M_{\alpha}(t,x)=\biggl[z_{b}(x)+\sum_{\beta=1}^{\alpha-1}\gamma_{\beta}H_{\beta}(t,x),z_{b}(x)+\sum_{\beta=1}^{\alpha}\gamma_{\beta}H_{\beta}(t,x)\biggr].
    \]
    Note that the expression of the vertical velocity $w$ is more complicated,
    but is not needed in view of \cref{rem:w_from_divu}.
\end{proposition}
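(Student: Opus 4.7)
The strategy is to apply the reconstruction formulas of \cref{thm:reciproque} directly, exploiting the fact that the piecewise constant structure of $H$ in the variable $\lambda$ makes the change of variable $\phi$ piecewise affine, and therefore explicitly invertible. The only mild subtlety is that \cref{thm:reciproque} was stated under $C^s$ regularity that our $(H,\tilde{u})$ do not satisfy across the interfaces $\lambda_{\alpha+1/2}$; however, as seen in \cref{cor:multi_layer_exact}, these piecewise constant functions do solve \eqref{eq:new_system} in the sense that each layerwise block solves \eqref{eq:sv_change} (and equivalently satisfies the distributional form of \eqref{eq:new_system}), so the algebraic formulas of \cref{thm:reciproque} remain meaningful and can be checked by direct substitution.

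First, I compute $\phi$ using \cref{thm:reciproque}:
\[
    \phi(t,x,\lambda) = z_b(x)+\int_{0}^{\lambda}H(t,x,\lambda')\,\mathrm{d}\lambda'.
\]
For $\lambda\in L_{\alpha}=[\lambda_{\alpha-1/2},\lambda_{\alpha+1/2}]$, the piecewise constant form of $H$ gives
\[
    \phi(t,x,\lambda) = z_b(x)+\sum_{\beta=1}^{\alpha-1}\gamma_{\beta}H_{\beta}(t,x)+(\lambda-\lambda_{\alpha-1/2})H_{\alpha}(t,x),
\]
which is affine in $\lambda$ on each layer with strictly positive slope $H_{\alpha}>0$. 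Evaluating at $\lambda=1=\lambda_{N+1/2}$ yields directly the announced formula for $\eta(t,x)$.

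Next, since $\phi(t,x,\cdot)$ maps the interval $L_{\alpha}$ bijectively and affinely onto the interval
\[
    \Bigl[z_{b}(x)+\sum_{\beta=1}^{\alpha-1}\gamma_{\beta}H_{\beta}(t,x),\; z_{b}(x)+\sum_{\beta=1}^{\alpha}\gamma_{\beta}H_{\beta}(t,x)\Bigr]=M_{\alpha}(t,x),
\]
the inverse $\phi^{-1}(t,x,\cdot)$ sends $z\in M_{\alpha}(t,x)$ into $L_{\alpha}$. Since $\tilde{u}(t,x,\cdot)$ is constant equal to $\tilde{u}_{\alpha}(t,x)$ on $L_{\alpha}$, the formula $u(t,x,z)=\tilde{u}(t,x,\phi^{-1}(t,x,z))$ from \cref{thm:reciproque} immediately produces
\[
    u(t,x,z)=\sum_{\alpha=1}^{N}\1_{z\in M_{\alpha}(t,x)}(z)\,\tilde{u}_{\alpha}(t,x).
\]
The intervals $M_{\alpha}(t,x)$ have pairwise disjoint interiors, cover $[z_{b}(x),\eta(t,x)]$, and this $u$ is well-defined almost everywhere in $z$, so the formula is unambiguous.

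The main thing to watch is the regularity gap between the hypotheses of \cref{thm:reciproque} and the piecewise constant setting; I expect no real obstacle beyond checking that the identifications used above are consistent and that the jumps of $u$ across the interfaces $z=z_{b}(x)+\sum_{\beta=1}^{\alpha}\gamma_{\beta}H_{\beta}(t,x)$ are compatible with the transport structure of \eqref{eq:def_phi}. Since each interface is itself a particular characteristic level set of $\phi$ (namely $\lambda=\lambda_{\alpha+1/2}$ is constant), it moves with the flow, and the definition is internally consistent. The vertical velocity is not needed, as pointed out in the statement and in \cref{rem:w_from_divu}.
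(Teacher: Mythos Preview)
Your proof is correct and follows essentially the same route as the paper: compute $\phi$ from the piecewise constant $H$, observe it is layerwise affine, read off $\eta=\phi(t,x,1)$, note that $\phi$ sends $L_\alpha$ onto $M_\alpha$, and conclude the formula for $u$ via $u=\tilde u\circ\phi^{-1}$. The paper additionally writes out the explicit inverse $\phi^{-1}$ on each layer, but as you observe this is unnecessary since $\tilde u$ is constant on $L_\alpha$; your discussion of the regularity gap with \cref{thm:reciproque} is an extra caution the paper simply does not raise.
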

\begin{proof}
Let $(H_{\alpha},\tilde{u}_{\alpha})$ be solution of \cref{eq:sv_change}
and define
\begin{align*}
    H & =\sum_{\alpha=1}^{N}\1_{\lambda\in L_{\alpha}}(\lambda)H_{\alpha}(t,x), & \tilde{u} & =\sum_{\alpha=1}^{N}\1_{\lambda\in L_{\alpha}}(\lambda)\tilde{u}_{\alpha}(t,x).
\end{align*}
The first step is to determine $\phi$. Since $H=\partial_{\lambda}H$,
then for any $\lambda\in L_{\alpha}$,
\begin{align*}
    \phi(t,x,\lambda) & =z_{b}(x)+\int_{0}^{\lambda}H(t,x,\lambda^{\prime})\mathrm{d}\lambda^{\prime} =z_{b}(x)+\sum_{\beta=1}^{\alpha-1}\gamma_{\beta}H_{\beta}(t,x)+(\lambda-\lambda_{\alpha-1/2})H_{\alpha}(t,x),
\end{align*}
so in particular in view of \cref{thm:reciproque}, the free surface
is given by
\[
    \eta(t,x)=\phi(t,x,1)=z_{b}(x)+\sum_{\alpha=1}^{N}\gamma_{\alpha}H_{\alpha}(t,x).
\]
This leads to the following spatial discretization of the interval
$[z_{b}(x),\eta(t,x)]$ of $\Omega_{t}$ in $N$ moving layers $M_{\alpha}(t,x)$:
\[
    M_{\alpha}(t,x)=[z_{b}(x)+\sum_{\beta=1}^{\alpha-1}\gamma_{\beta}H_{\beta}(t,x),z_{b}(x)+\sum_{\beta=1}^{\alpha}\gamma_{\beta}H_{\beta}(t,x)].
\]
Since the interval $L_{\alpha}$ is mapped by $\phi(t,x)$ into $M_{\alpha}$,
we can easily inverse $\phi$ and obtain that for any $z\in M_{\alpha}(t,x)$,
\[
    \phi^{-1}(t,x,z)=\gamma_{\alpha-1/2}+\frac{1}{H_{\alpha}}\left[z-z_{b}(x)-\sum_{\beta=1}^{\alpha-1}\gamma_{\beta}H_{\beta}(t,x)\right].
\]
Therefore for any $z\in M_{\alpha}(t,x)$,
\[
    u(t,x,z)=\tilde{u}(t,x,\phi^{-1}(t,x,z))=\1_{z\in M_{\alpha}(t,x)}(z)\tilde{u}_{\alpha}(t,x),
\]
and the claimed expression for horizontal velocity $u$ is proven.
\end{proof}

\begin{remark}

We remark that system~\eqref{eq:sv_change} can also be written under the form
\begin{equation} \label{eq:sv_change_cons}
    \left\{
    \begin{aligned}
        & \frac{\partial H_\alpha}{\partial t} + \partial_x (H_\alpha\tilde u_\alpha) = 0,\\
        & \frac{\partial (H_\alpha\tilde u_\alpha)}{\partial t} + \partial_x \bigl(H_\alpha\tilde u_\alpha^2\bigr) + gH_\alpha\partial_x \biggl(\sum_{\beta=1}^N \gamma_\beta H_\beta\biggr) = -g H_\alpha\partial_x z_b,
    \end{aligned}
    \right.
\end{equation}
It is interesting to note that \eqref{eq:sv_change_cons} (with constant topography) corresponds exactly to the multilayer model (with zero viscosity) proposed by Audusse in \cite{audusse2005}. However, the derivation is pretty different.
In Audusse, the layers are assumed to be made of different immiscible fluids without exchange between them, and the model is a formal asymptotic expansion of the hydrostatic model.
However, in view of \cref{cor:multi_layer_exact}, equation~\eqref{eq:sv_change} provides a special class of solutions to system~\eqref{eq:new_system} without any approximation (except on the initial data).

In terms of numerical approximation, the system~\eqref{eq:sv_change_cons} can be an interesting alternative to existing multilayer models with mass exchange simulating hydrostatic free-surface flows, see~\cite{art_3d} and references therein.

Notice that the multilayer formulation has also to deal with the change of variable that can become singular when time evolves and thus hardly invertible. Therefore a reinterpolation of the variables $(H_\alpha,\tilde u_\alpha)$ has to be done when quantities vary greatly in the domain under consideration. The total mass, the total momentum and the total energy of a column being conserved for smooth solutions, the reinterpolations should also conserve the corresponding quantities. A comparison of performance (computational cost versus accuracy) with classical approximation methods (see~\cite{art_3d} in the multilayer context) would be useful.

We also note that \eqref{eq:sv_change_cons} was also obtained by Bardos-Besse in \cite[§5.0.2]{Bardos-VDB2013} with a muti-kinetic Ansatz in the Vlasov-Dirac-Benney equation.

A similar multilayer formulation was proposed and analyzed in \cite{chesnokov2017} for $\mathbb{P}_1$ approximations.
\end{remark}

\subsection{Characterization of the spectrum in the discrete case}

It is important to study the hyperbolic nature of the multilayer system \eqref{eq:sv_change}, in particular in view of analyzing the stability of a numerical scheme. To this end, the multilayer model \eqref{eq:sv_change} can be rewritten abstractly as the following quasi-linear system.

\begin{proposition}
System~\eqref{eq:sv_change} can be written in a quasi-linear form
\begin{equation}\label{eq:quasi_multilater}
    \frac{\partial \tilde{\boldsymbol{\mathcal{U}}}}{\partial t} + A_{N}(\tilde{\boldsymbol{\mathcal{U}}})\frac{\partial \tilde{\boldsymbol{\mathcal{U}}}}{\partial x}  = \tilde{\boldsymbol{S}},
\end{equation}
with $\tilde{\boldsymbol{\mathcal{U}}} = (\boldsymbol{H}_N,\tilde{\boldsymbol{U}}_N)^T$ and $\tilde{\boldsymbol{S}} = (\mathbf{0}_N,-g \partial_x z_b \mathbf{1}_N)$ being block vectors and $A_{N}(\tilde{\boldsymbol{\mathcal{U}}})$ being the $2\times 2$ block matrix defined by
\[
    A_{N} (\tilde{\boldsymbol{\mathcal{U}}})= \begin{pmatrix}
    \diag(\tilde{\boldsymbol{U}}_N) & \diag(\boldsymbol{H}_N)\\
    g\mathbf{1}_{N}\otimes\boldsymbol{\gamma}_{N} & \diag(\tilde{\boldsymbol{U}}_N)
    \end{pmatrix},
\]
where $\boldsymbol{\gamma}_{N} =(\gamma_{1},\dots,\gamma_{N})^{T}$.
\end{proposition}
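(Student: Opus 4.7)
The plan is a direct verification: I rewrite each equation of \eqref{eq:sv_change} by expanding the spatial derivatives with the product rule, then identify the coefficient in front of each component of $\partial_x\tilde{\boldsymbol{\mathcal{U}}}$, and finally assemble these coefficients into the two block rows of $A_N(\tilde{\boldsymbol{\mathcal{U}}})$.

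First, I would treat the mass equations. Using the product rule, \eqref{eq:sv_change}$_1$ becomes
\[
    \frac{\partial H_\alpha}{\partial t} + \tilde u_\alpha\,\frac{\partial H_\alpha}{\partial x} + H_\alpha\,\frac{\partial \tilde u_\alpha}{\partial x} = 0, \qquad \alpha=1,\ldots,N.
\]
For fixed $\alpha$, the coefficient in front of $\partial_x H_\beta$ is $\tilde u_\alpha \delta_{\alpha\beta}$ and the coefficient in front of $\partial_x \tilde u_\beta$ is $H_\alpha \delta_{\alpha\beta}$. Collecting these over all $\alpha$ yields exactly the top block row $\bigl(\diag(\tilde{\boldsymbol{U}}_N),\,\diag(\boldsymbol{H}_N)\bigr)$, and the source term in this row is $\boldsymbol{0}_N$.

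Next I would treat the momentum equations. Equation \eqref{eq:sv_change}$_2$ already has $\partial_x$ only acting on linear expressions in the unknowns, so no further expansion is needed:
\[
    \frac{\partial \tilde u_\alpha}{\partial t} + \tilde u_\alpha\,\frac{\partial \tilde u_\alpha}{\partial x} + g\sum_{\beta=1}^{N}\gamma_\beta\,\frac{\partial H_\beta}{\partial x} = -g\frac{\partial z_b}{\partial x}, \qquad \alpha=1,\ldots,N.
\]
Here the coefficient in front of $\partial_x H_\beta$ equals $g\gamma_\beta$, independent of $\alpha$; this is precisely the $(\alpha,\beta)$-entry of the rank-one matrix $g\,\mathbf{1}_N\otimes\boldsymbol{\gamma}_N$, with the convention $(\mathbf{1}_N\otimes\boldsymbol{\gamma}_N)_{\alpha\beta}=\gamma_\beta$. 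The coefficient in front of $\partial_x \tilde u_\beta$ is $\tilde u_\alpha\delta_{\alpha\beta}$, producing again $\diag(\tilde{\boldsymbol{U}}_N)$. The source term $-g\,\partial_x z_b\,\mathbf{1}_N$ then fills the lower block of $\tilde{\boldsymbol{S}}$.

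Assembling the two block rows I obtain the stated matrix $A_N(\tilde{\boldsymbol{\mathcal{U}}})$ and source $\tilde{\boldsymbol{S}}$, which gives \eqref{eq:quasi_multilater}. There is no real obstacle in this proof beyond fixing the convention for $\mathbf{1}_N\otimes\boldsymbol{\gamma}_N$ so that its entries read off correctly as $g\gamma_\beta$; once this is stated, the verification reduces to an inspection of the two families of scalar equations.
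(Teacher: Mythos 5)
Your verification is correct and matches the paper's approach: the paper simply states that the result follows by explicitly computing the matrix product, which is exactly what you carry out. Your only added value is the explicit bookkeeping and the fixing of the outer-product convention $(\mathbf{1}_N\otimes\boldsymbol{\gamma}_N)_{\alpha\beta}=\gamma_\beta$, which is a reasonable thing to make explicit but does not constitute a different method.
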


\begin{proof}
The result follows easily by explicitly computing the matrix product.
\end{proof}

The following result corresponds to a discrete version of the definition of the point spectrum given in \cref{thm:spectrum} in the case of flat topography.

\begin{proposition} \label{prop:discrete_eigen}
Let $\tilde{\boldsymbol{\mathcal{U}}} = (\boldsymbol{H}_N,\tilde{\boldsymbol{U}}_N)^T$ be a solution of \eqref{eq:quasi_multilater} with $z_b=0$. If $\boldsymbol{H}_N>0$ and $g>0$ then the matrix $A_{N} (\tilde{\boldsymbol{\mathcal{U}}})$ admits at most $2N$ eigenvalues given by
\begin{equation} \label{eq:matrix_spectrum}
    \sigma(A_{N}) = \left\{ c\in\mathbb{C}\setminus\tilde{\boldsymbol{U}}_N : \sum_{i=1}^N \frac{g \gamma_i H_i}{(\tilde u_i-c)^2} = 1
    \right\} \cup \left\{
    c\in\tilde{\boldsymbol{U}}_N\,:\:\card\bigl(\tilde{\boldsymbol{U}}_N^{-1}(c)\bigr)>1\right\},
\end{equation}
where $\tilde{\boldsymbol{U}}_N^{-1}(c)=\bigl\{j\in\{1,\dots,N\} : \tilde{u}_j=c\bigr\}$ is the set of indices for which $\tilde{u}_j=c$.
In particular if all the $\tilde{u}_i$ are distinct for $i\in\{1,\dots,N\}$, then the second part in the union is empty.
\end{proposition}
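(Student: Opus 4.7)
The plan is to mimic, at the discrete level, the injectivity analysis carried out in the proof of \cref{thm:spectrum}, exploiting the fact that $A_N$ has exactly the same block structure as $A_0$, with the integral against $\mathrm{d}\lambda$ replaced by the weighted sum against the $\gamma_i$. I would let $(\boldsymbol{\phi}_1,\boldsymbol{\phi}_2)\in\mathbb{C}^{N}\times\mathbb{C}^N$ denote a putative eigenvector associated with $c$ and write the system componentwise as
\begin{align*}
    H_i\,\phi_{2,i} & = (c-\tilde u_i)\,\phi_{1,i},\\
    g\sum_{j=1}^N\gamma_j\phi_{1,j} & = (c-\tilde u_i)\,\phi_{2,i},
\end{align*}
which is precisely the discrete analogue of \eqref{eq:sys_eigen_sanstopo1}--\eqref{eq:sys_eigen_sanstopo2}. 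The characterization then splits naturally into two cases according to whether $c$ belongs to the set of velocity values $\{\tilde u_1,\dots,\tilde u_N\}$.

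First I would address the generic case $c\notin\{\tilde u_1,\dots,\tilde u_N\}$. Introducing the scalar $S=\sum_{j}\gamma_j\phi_{1,j}$, the two relations combine into
\[
    \phi_{1,i} = \frac{gH_i S}{(c-\tilde u_i)^2},\qquad \phi_{2,i} = \frac{gS}{c-\tilde u_i}.
\]
For the eigenvector to be nontrivial one must have $S\neq0$, and substituting these expressions back into the definition of $S$ yields the compatibility condition $\sum_{i=1}^N g\gamma_i H_i/(\tilde u_i-c)^2=1$. Conversely, any $c$ satisfying this equation produces a nonzero eigenvector through the formula above.

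Next I would treat the degenerate case where $c=\tilde u_k$ for at least one index $k$, and let $K=\tilde{\boldsymbol{U}}_N^{-1}(c)$ denote the set of all such indices. For $i\notin K$, the factor $c-\tilde u_i$ is nonzero and the two equations immediately yield $\phi_{1,i}=\phi_{2,i}=0$. For $i\in K$, the second equation forces $S=0$, and then, using $H_i>0$, the first equation forces $\phi_{2,i}=0$. The only remaining freedom is in $\{\phi_{1,i}\}_{i\in K}$, subject to the single linear constraint $\sum_{i\in K}\gamma_i\phi_{1,i}=0$. Since all $\gamma_i>0$, this admits a nontrivial solution if and only if $\card(K)\geq2$, which is exactly the second condition in \eqref{eq:matrix_spectrum}.

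Combining both cases gives the stated characterization of $\sigma(A_N)$, and the bound of at most $2N$ eigenvalues follows trivially from the dimension of the ambient space. I do not expect any serious obstacle: the argument is a direct finite-dimensional specialization of the injectivity portion of the proof of \cref{thm:spectrum}, and no surjectivity, residual-spectrum, or essential-range discussion is needed since $A_N$ acts on a finite-dimensional space.
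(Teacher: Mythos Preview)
Your argument is correct and follows a genuinely different route from the paper's. The paper computes the characteristic polynomial directly: using that the diagonal blocks commute, it applies a block-determinant identity to reduce $\det(A_N - cI_{2N})$ to $\det\bigl(\diag(\tilde{\boldsymbol{U}}_N - c\mathbf{1}_N)^2 - \mathbf{1}_N\otimes\boldsymbol{g}_N\bigr)$, and then invokes the Sherman--Morrison formula (when $c\notin\tilde{\boldsymbol{U}}_N$) or an explicit row reduction (when $c\in\tilde{\boldsymbol{U}}_N$) to evaluate the resulting determinant. Your approach instead works directly with the eigenvector equations, mirroring the injectivity portion of the proof of \cref{thm:spectrum}; this is more elementary, avoids any determinant machinery, and makes the parallel with the continuous operator $A_0$ transparent, at the small cost of not producing the characteristic polynomial in closed form.

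One minor presentation issue in the degenerate case: your claim that for $i\notin K$ the two equations \emph{immediately} give $\phi_{1,i}=\phi_{2,i}=0$ is not quite right as stated, since for those indices the equations only express $\phi_{1,i}$ and $\phi_{2,i}$ in terms of $S$. The correct order is to first take any $i\in K$ in the second equation, which forces $gS=0$ and hence $S=0$; only then do the equations for $i\notin K$ collapse to $\phi_{1,i}=\phi_{2,i}=0$. With this reordering the remainder of your analysis (the single linear constraint $\sum_{i\in K}\gamma_i\phi_{1,i}=0$ on $\card(K)$ unknowns) goes through unchanged.
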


In the shallow water regime, \emph{i.e.} when $\tilde{u}_i=\tilde{u}_1$ for all $i$, the spectrum reduces to three elements as explained for the continuous case in \cref{rem:spectrum_saint_venant}.
We note that in view of \cref{cor:multi_layer_exact}, \cref{prop:discrete_eigen} can be viewed as a  corollary of \cref{thm:spectrum} by considering the finite-dimensional subspace of $L^2(I)$ of $\mathbb{P}_0$-approximations and modifying the Lebesgue measure appropriately on this space.
However, we provide here a more straightforward proof.

\begin{proof}
In view of the structure of the matrix $A_{N}(\tilde{\boldsymbol{\mathcal{U}}})$, its characteristic polynomial is:
\[
    \det \bigr(A_{N}(\tilde{\boldsymbol{\mathcal{U}}})-c I_{2N}\bigl) = \det \bigr(A_{N}(\tilde{\boldsymbol{\mathcal{U}}}-c(\mathbf{0}_N,\mathbf{1}_N)\bigl)
\]
so by shifting $\tilde{\boldsymbol{U}}_N \mapsto \tilde{\boldsymbol{U}}_N - c \mathbf{1}_N$ we can compute the characteristic polynomial for $c=0$ without loss of generality.
Since $\diag(\tilde{\boldsymbol{U}}_{N})$ and $\diag(\boldsymbol{H}_{N})$
commute, using \cite[equation (16)]{silvester} we obtain
\[
\det (A_{N}\tilde{\boldsymbol{\mathcal{U}}})=\det\bigl(\diag(\tilde{\boldsymbol{U}}_{N})\diag(\tilde{\boldsymbol{U}}_{N})-g\mathbf{1}_N\otimes\boldsymbol{\gamma}_N\diag(\boldsymbol{H}_{N})\bigr)=\det\bigl(\diag(\tilde{\boldsymbol{U}}_{N})^{2}-\mathbf{1}_{N}\otimes\boldsymbol{g}_{N}\bigr),
\]
where $\boldsymbol{g}_{N}=g\boldsymbol{\gamma}_N\odot\boldsymbol{H}_N$ denotes the element-wise or Hadamard product of $\boldsymbol{\gamma}_N$ with $\boldsymbol{H}_N$, \emph{i.e} $g_\alpha=g\gamma_{\alpha}H_{\alpha}$.
Hence $A_{N}(\tilde{\boldsymbol{\mathcal{U}}})$ is invertible if and
only if $\diag(\tilde{\boldsymbol{U}}_{N})^{2}-\mathbf{1}_{N}\otimes\boldsymbol{g}_{N}$
is invertible.
Now one has to distinguish the cases where $\diag(\tilde{\boldsymbol{U}}_{N})$ is invertible or not:
\begin{enumerate}
    \item If $\diag(\tilde{\boldsymbol{U}}_{N})$ is invertible, by using the Sherman--Morrison formula~\cite{SMW}, this is equivalent
    to the fact that $1-\mathbf{1}_{N}\cdot\diag(\tilde{\boldsymbol{U}}_{N})^{-2}\boldsymbol{g}_{N}\neq0$.
    This later condition writes equivalently 
    \[
        1-\mathbf{1}_{N}\cdot\diag(\tilde{\boldsymbol{U}}_{N})^{-2}\boldsymbol{g}_{N}=1-\tilde{\boldsymbol{U}}_{N}^{\odot-2}\cdot\boldsymbol{g}_{N}=1-\sum_{i=1}^{N}\frac{g\gamma_{i}H_{i}}{\tilde{u}_{i}^{2}} \neq 0,
    \]
    where $\tilde{\boldsymbol{U}}_{N}^{\odot-2}=(\tilde{u}_1^{-2},\dots,\tilde{u}_N^{-2})$ denotes the element-wise or Hadamard power.
    Performing the shift back in $\tilde{\boldsymbol{U}}$ leads to the first part in the union given by \eqref{eq:matrix_spectrum}.
    
    \item If $\diag(\tilde{\boldsymbol{U}}_{N})$ is not invertible, then one of its diagonal element is zero, \emph{i.e.} there exists $j\in\{0,\dots,N\}$ such that $\tilde{u}_j=0$.
    Therefore the $j$-th row of the matrix $\diag(\tilde{\boldsymbol{U}}_{N})^{2}-\mathbf{1}_{N}\otimes\boldsymbol{g}_{N}$ is exactly $-\boldsymbol{g}_N^T$ and by subtracting this row to all the others, we obtain that $\diag(\tilde{\boldsymbol{U}}_{N})^{2}-\mathbf{1}_{N}\otimes\boldsymbol{g}_{N}$ is invertible precisely when $\diag(\tilde{\boldsymbol{U}}_{N})^{2}-\mathbf{e}_j\otimes\boldsymbol{g}_N$ is invertible, where $\boldsymbol{e}_j=(\delta_{1j},\dots,\delta_{Nj})$ is the $j$-th basis vector.
    Therefore we proved that $A_{N}(\tilde{\boldsymbol{\mathcal{U}}})$ is invertible if and only if:
    \[
        \det\bigl(\diag(\tilde{\boldsymbol{U}}_{N})^{2}-\mathbf{e}_j\otimes\boldsymbol{g}_N\bigr) = -g \gamma_j H_j \prod_{i\neq j}\tilde{u}_i^2 \neq 0.
    \]
    Since by assumption $H_i>0$, we obtain that $c=0$ is an eigenvalue if and only if $\tilde{u}_i=\tilde{u}_j=0$ for some $i\neq j$, which is precisely the condition $\card\bigl(\tilde{\boldsymbol{U}}_N^{-1}(0)\bigr)>1$.
    Performing the shift back in $\tilde{\boldsymbol{U}}$ leads to the second part in the union given by \eqref{eq:matrix_spectrum}.
\end{enumerate}
\end{proof}

In the discrete case, we can slightly strengthen \cref{prop:spectrum_localization,prop:limit_case}.
\begin{proposition} \label{prop:discrete_localization}
For $\boldsymbol{H}_N>0$, the eigenvalues $\sigma(A_{N})$ of $A_N$ are contained in the union of the following $N+2$ sets:
\begin{align*}
    J_{-} &= \bigl[\tilde{u}_{-}-\sqrt{gh_N},\tilde{u}_{-}\bigr), &
    J_{+} &= \bigl(\tilde{u}_{+},\tilde{u}_{+}+\sqrt{gh_N}\bigr],
\end{align*}
\[
    D_{i}=\left\{ z\in\mathbb{C}\,:\:|z-\tilde{u}_{i}|\leq\sqrt{gh_{N}} \text{ and } \tilde{u}_{-}\leq\Re z\leq\tilde{u}_{+}\right\},
\]
for $1\leq i\leq N$ (illustrated on \cref{fig:eigen_localization} for $N=4$), where
\begin{align*}
    \tilde{u}_{-} &= \inf_\alpha \tilde{u}_\alpha, &
    \tilde{u}_{+} &= \sup_\alpha \tilde{u}_\alpha, &
    h_N = \sum_{i=1}^{N}\gamma_{i}H_{i}.
\end{align*}
Moreover, there exist exactly two eigenvalues $c_{\pm}$ in the intervals $J_{\pm}$:
\[
    \sigma(A_{N})\cap J_{\pm} = \{c_{\pm}\}.
\]
\end{proposition}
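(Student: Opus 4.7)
The plan is to mimic the proofs of \cref{prop:spectrum_localization} and \cref{prop:limit_case} with the integral $\int_0^1 \frac{gH}{(c-\tilde{u})^2}\mathrm{d}\lambda$ replaced by the finite sum
\[
    F_N(c) = \sum_{i=1}^{N}\frac{g\gamma_i H_i}{(c-\tilde{u}_i)^2}.
\]
By \cref{prop:discrete_eigen}, any eigenvalue outside $\tilde{\boldsymbol{U}}_N$ satisfies $F_N(c)=1$, while eigenvalues inside $\tilde{\boldsymbol{U}}_N$ equal some $\tilde{u}_i$ and so lie trivially in $D_i$. The key simplification compared with the continuous case is that $F_N$ has genuine poles at each $\tilde{u}_i$, which removes the delicate Hölder-regularity analysis of \cref{prop:limit_case}.

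For the rectangle-and-discs localization, I would first transcribe the argument of \cref{prop:spectrum_localization}: computing $\Im F_N(c) = -2\,\Im c\sum_i \frac{g\gamma_i H_i(\Re c-\tilde{u}_i)}{|c-\tilde{u}_i|^4}$ forces any eigenvalue with $\Re c\notin[\tilde{u}_-,\tilde{u}_+]$ to be real, and then strict monotonicity of $F_N$ on $(\tilde{u}_+,\infty)$ and $(-\infty,\tilde{u}_-)$ together with the bound $F_N(\tilde{u}_\pm\pm\sqrt{gh_N})\leq 1$ confines such real eigenvalues to $J_\pm$. For the disc refinement, I would use the crude estimate: if $|c-\tilde{u}_i|>\sqrt{gh_N}$ were to hold for every $i$, then
\[
    |F_N(c)|\leq \sum_i \frac{g\gamma_i H_i}{|c-\tilde{u}_i|^2} < \frac{1}{h_N}\sum_i \gamma_i H_i = 1,
\]
contradicting $F_N(c)=1$. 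Hence every eigenvalue with $\Re c\in[\tilde{u}_-,\tilde{u}_+]$ must belong to some disc $|c-\tilde{u}_i|\leq\sqrt{gh_N}$, giving $c\in D_i$.

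For the existence and uniqueness of $c_\pm$ in $J_\pm$, I would restrict $F_N$ to the real interval $J_+=(\tilde{u}_+,\tilde{u}_++\sqrt{gh_N}]$. Picking any index $j$ with $\tilde{u}_j=\tilde{u}_+$ (which exists since the maximum is attained on a finite set), the term $\frac{g\gamma_j H_j}{(c-\tilde{u}_j)^2}$ diverges to $+\infty$ as $c\to\tilde{u}_+^+$, so $F_N(c)\to+\infty$; at the right endpoint $c=\tilde{u}_++\sqrt{gh_N}$ one has $(c-\tilde{u}_i)^2\geq gh_N$ for all $i$, hence $F_N(c)\leq 1$. Since $F_N'(c)=-2\sum_i\frac{g\gamma_i H_i}{(c-\tilde{u}_i)^3}<0$ on $J_+$, $F_N$ is continuous and strictly decreasing there, and the intermediate value theorem yields exactly one $c_+\in J_+$ with $F_N(c_+)=1$; the construction of $c_-\in J_-$ is symmetric. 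I do not anticipate any serious obstacle: the discrete setting replaces the $L^4$-integrability/Hölder analysis by an automatic pole, and the only minor point to keep in mind is that eigenvalues of the second type in \eqref{eq:matrix_spectrum}, namely repeated values $\tilde{u}_i$, are captured for free by the corresponding disc $D_i$.
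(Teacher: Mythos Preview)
Your proposal is correct and follows essentially the same approach as the paper: reduce to studying $F_N(c)$ via \cref{prop:discrete_eigen}, use the sign of $\Im F_N(c)$ to force reality outside the strip $\tilde{u}_-\leq\Re c\leq\tilde{u}_+$, use the crude bound $|F_N(c)|<1$ when all $|c-\tilde{u}_i|>\sqrt{gh_N}$ for the disc confinement (the paper remarks this is just Gershgorin), and then obtain $c_\pm$ from the pole at $\tilde{u}_\pm$, the bound at the far endpoint, and strict monotonicity of $F_N$ on $J_\pm$. Your observation that the genuine pole at $\tilde{u}_\pm$ replaces the H\"older analysis of \cref{prop:limit_case} is exactly the point.
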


\begin{figure}
\centering
\begin{tikzpicture}[scale=1.2]
    \draw[thick,->] (-4.5,0) -- (4.5,0) node[right] {$\Re c$};
    \draw[thick,->] (0,-2.) -- (0,2) node[above] {$\Im c$};
    \draw (-0.08,1.5) node[right] {$+\sqrt{gh_N}$};
    \draw (-0.08,-1.5) node[right] {$-\sqrt{gh_N}$};
    \begin{scope}
        \clip (-2.5,-1.5) rectangle (2.5,1.5);
        \fill[nearly transparent](-2.5,0) circle(1.5);
    	\draw (-2.6,1.5) node[below right] {$D_1$};
    	\fill[nearly transparent](-0.8,0) circle(1.5);
    	\draw (-0.8,1.5) node[below] {$D_2$};
    	\fill[nearly transparent](1.6,0) circle(1.5);
    	\draw (1.6,1.5) node[below left] {$D_3$};
    	\fill[nearly transparent](2.5,0) circle(1.5);
    	\draw (2.6,1.5) node[below left] {$D_4$};
    \end{scope}
    \fill[nearly transparent](-4,-0.05) rectangle (-2.5,0.05);
    \draw (-3.25,0) node[below] {$J_-$};
    \fill[nearly transparent](4,-0.05) rectangle (2.5,0.05);
    \draw (3.25,0) node[below] {$J_+$};
    \draw[fill] (-2.5,0) circle (2pt);
    \draw (-2.5,0) node[below] {$\tilde{u}_1$};
    \draw[fill] (-0.8,0) circle (2pt);
    \draw (-0.8,0) node[below] {$\tilde{u}_2$};
    \draw[fill] (1.6,0) circle (2pt);
    \draw (1.6,0) node[below] {$\tilde{u}_3$};
    \draw[fill] (2.5,0) circle (2pt);
    \draw (2.5,0) node[below] {$\tilde{u}_4$};
\end{tikzpicture}
\caption{Example for $N=4$: the spectrum $\sigma(A_N)$ is included in the intervals $J_{\pm}$ and disks $D_i$ defined in \cref{prop:discrete_localization}. Moreover exactly one eigenvalue $c_{\pm}$ is in each interval $J_{\pm}$.}
\label{fig:eigen_localization}
\end{figure}

\begin{proof}
In view of \cref{prop:discrete_eigen}, the result reduces to the study of the function $F_N$ defined on $\mathbb{C}\setminus\tilde{\boldsymbol{U}}_N$ by:
\begin{equation}\label{eq:F_N_c}
    F_N(c) = \sum_{i=1}^N \frac{g \gamma_i H_i}{(\tilde u_i-c)^2}.
\end{equation}
We note that $F_N(c)$ is a Riemann sum approximation of $F(c)$ given by \eqref{eq:F_c}.
As in the proof of \cref{prop:spectrum_localization}, the first step is to prove:
\[
    \sigma(A_{N})\subset\left\{ z\in\mathbb{C}\,:\:\tilde{u}_{-}\leq\Re z\leq\tilde{u}_{+}\right\} \cup \R.
\]
Since
\[
    \Im F_N(c) = \sum_{i=1}^N \frac{g \gamma_i H_i (\Re c-\tilde{u}_i)\Im c}{|\tilde u_i-c|^2},
\]
if $\Re c>\tilde{u}_{+}$ or $\Re c<\tilde{u}_{-}$, then
$(\Re c-\tilde{u})$ is either strictly positive or strictly negative
on $I$, so the only way to make the sum $\Im F(c)$ zero, is
that $\Im c=0$.

The second step is to prove that the eigenvalues are in the union of the disks $\{D_i\}_{i=1}^{N}$. If $c$ is not in the union of theses disks, then we have $|c-\tilde{u}_i|>\sqrt{g h_n}$ for all $1\leq i\leq N$, and therefore
\[
    |F_N(c)| \leq \sum_{i=1}^N \frac{g \gamma_i H_i}{|\tilde u_i-c|^2} < \sum_{i=1}^N \frac{g \gamma_i H_i}{g h_N} = 1,
\]
and $c$ cannot satisfy $F_N(c)=1$.
We note that this second step is exactly given by the Gershgorin circle theorem.

Finally, the third step is to show the existence of $c_{\pm}\in J_{\pm}$ such that $\sigma(A_{N})\cap J_{\pm}=\{c_{\pm}\}$. The strategy of proof is completely similar to the one in \cref{prop:limit_case} so we only state the main steps. We have:
\begin{align*}
    \lim_{c\to \tilde{u}_{-}}F_N(c)&=\infty, &
    \lim_{c\to \tilde{u}_{+}}F_N(c)&=\infty, &
    F_N(\tilde{u}_{\pm}\pm\sqrt{gh})\le1,
\end{align*}
and since $F$ is strictly decreasing on $J_{+}$ and strictly
increasing on $J_{-}$, there exists exactly one solution
of $F(c)=1$ in $J_{-}$ and exactly one in $J_{+}$.
\end{proof}

In contrast to the continuous case, the previous result is not characterizing all real eigenvalues, as real solutions of summation condition could be in the interval $[\tilde{u}_{-},\tilde{u}_{+}]$. This can be ruled out, under the following assumptions.

\begin{proposition} \label{prop:discret_spectrum}
If $\boldsymbol{H}_N>0$, and either:
\begin{enumerate}
    \item $\tilde{u}_{+}-\tilde{u}_{-} < \sqrt{gh_N}$, or
    \item $\max_i(|\tilde{u}_{i}-\tilde{u}_{i+1}|^2) < 8g \min_{i}(\gamma_{i}H_{i})$
\end{enumerate}
then:
\[
    \sigma(A_{N})\cap \R=\left\{ c_{-}, c_{+} \right\}
    \cup
    \left\{c\in\tilde{\boldsymbol{U}}_N\,:\:\card\bigl(\tilde{\boldsymbol{U}}_N^{-1}(c)\bigr)>1\right\},
\]
and in particular if all the $\tilde{u}_i$ are distinct this reduces to $\sigma(A_{N})\cap \R=\left\{ c_{-}, c_{+} \right\}$.
\end{proposition}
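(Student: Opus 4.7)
The plan is to combine \cref{prop:discrete_eigen} with \cref{prop:discrete_localization}. By \cref{prop:discrete_eigen} the real spectrum of $A_N$ is the union of the multi-valued set $\{c\in\tilde{\boldsymbol{U}}_N : \card(\tilde{\boldsymbol{U}}_N^{-1}(c))>1\}$ and the real solutions of $F_N(c)=1$, where, following \eqref{eq:F_N_c},
\[
    F_N(c)=\sum_{i=1}^{N}\frac{g\gamma_{i}H_{i}}{(\tilde{u}_{i}-c)^{2}}.
\]
By \cref{prop:discrete_localization}, $F_N(c)=1$ has exactly one solution $c_\pm$ in each interval $J_\pm$ and no real solution outside $J_-\cup[\tilde{u}_-,\tilde{u}_+]\cup J_+$. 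Hence the task reduces to ruling out any real solution of $F_N(c)=1$ lying in $[\tilde{u}_-,\tilde{u}_+]\setminus\tilde{\boldsymbol{U}}_N$: on such $c$, $F_N$ is real, positive, and we simply need to show it is strictly greater than $1$.

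For case~(1), the argument is immediate: for any $c\in[\tilde{u}_-,\tilde{u}_+]\setminus\tilde{\boldsymbol{U}}_N$ and every index $i$, one has $|\tilde{u}_i-c|\leq\tilde{u}_+-\tilde{u}_-$, so
\[
    F_N(c)\geq\sum_{i=1}^{N}\frac{g\gamma_{i}H_{i}}{(\tilde{u}_+-\tilde{u}_-)^{2}}=\frac{gh_N}{(\tilde{u}_+-\tilde{u}_-)^{2}}>1,
\]
using directly the hypothesis $(\tilde{u}_+-\tilde{u}_-)^2<gh_N$.

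For case~(2), I first reorder the velocities into sorted order $\tilde{u}_{(1)}\leq\cdots\leq\tilde{u}_{(N)}$. A purely combinatorial lemma (which I would state and prove in one line) ensures that the maximum sorted consecutive gap $\max_j(\tilde{u}_{(j+1)}-\tilde{u}_{(j)})$ is bounded above by $\max_i|\tilde{u}_i-\tilde{u}_{i+1}|$: along the original-index path from the position of $\tilde{u}_{(j)}$ to the position of $\tilde{u}_{(j+1)}$, no intermediate value lies in the open sorted gap, so at least one step must jump across it. Given $c\in(\tilde{u}_-,\tilde{u}_+)\setminus\tilde{\boldsymbol{U}}_N$, it lies strictly between two consecutive sorted values, say $\tilde{u}_{(j)}<c<\tilde{u}_{(j+1)}$. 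Writing $a=c-\tilde{u}_{(j)}$ and $b=\tilde{u}_{(j+1)}-c$, both positive with $a+b=\tilde{u}_{(j+1)}-\tilde{u}_{(j)}$, the arithmetic-geometric mean inequality yields $ab\leq(a+b)^2/4$ and then $1/a^2+1/b^2\geq 2/(ab)\geq 8/(a+b)^2$. Keeping only the two terms of $F_N(c)$ corresponding to $\tilde{u}_{(j)}$ and $\tilde{u}_{(j+1)}$ and bounding their numerators below by $g\min_i(\gamma_iH_i)$ gives
\[
    F_N(c)\geq g\min_i(\gamma_iH_i)\left(\frac{1}{a^2}+\frac{1}{b^2}\right)\geq\frac{8g\min_i(\gamma_iH_i)}{(\tilde{u}_{(j+1)}-\tilde{u}_{(j)})^2}>1,
\]
where the last strict inequality uses the combinatorial lemma together with the hypothesis $\max_i|\tilde{u}_i-\tilde{u}_{i+1}|^2<8g\min_i(\gamma_iH_i)$.

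The main obstacle is the ordering issue in case~(2): the hypothesis is phrased with the original layer indexing, but the natural proof gaps are between sorted consecutive values. I expect this to be dispatched by the short combinatorial observation above, so the real content is the single AM-GM estimate on two terms of $F_N$. Beyond that, only routine checks remain: the endpoints $\tilde{u}_\pm$ themselves are excluded from the spectrum in the single-multiplicity case since $F_N$ blows up there, and multi-valued $\tilde{u}_i$'s are exactly what forms the second set in the stated characterization of $\sigma(A_N)\cap\R$.
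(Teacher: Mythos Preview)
Your proof is correct and follows essentially the same strategy as the paper: show $F_N(c)>1$ on $(\tilde{u}_-,\tilde{u}_+)\setminus\tilde{\boldsymbol{U}}_N$, in case~(1) by the trivial global bound and in case~(2) by keeping two nearby terms and extracting the $8/(\text{gap})^2$ estimate.

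The one noteworthy difference is in case~(2), where you identify the ordering issue as the ``main obstacle'' and route through sorted values plus a combinatorial lemma. The paper sidesteps this entirely: for any $c\in(\tilde{u}_-,\tilde{u}_+)\setminus\tilde{\boldsymbol{U}}_N$, walking along the \emph{original} indices from an index realizing $\tilde{u}_-$ to one realizing $\tilde{u}_+$ forces $c$ to lie in some interval $(m_j,M_j)$ with $m_j=\min(\tilde{u}_j,\tilde{u}_{j+1})$, $M_j=\max(\tilde{u}_j,\tilde{u}_{j+1})$ for an original consecutive pair $j,j+1$. Keeping precisely those two terms then gives $F_N(c)\ge 8g\min_i(\gamma_iH_i)/|\tilde{u}_{j+1}-\tilde{u}_j|^2>1$ directly from the hypothesis, with no sorting and no auxiliary lemma. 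Your AM--GM computation and the paper's explicit minimization of $\tfrac{1}{(c-\tilde{u}_j)^2}+\tfrac{1}{(c-\tilde{u}_{j+1})^2}$ at the midpoint $c^*=(\tilde{u}_j+\tilde{u}_{j+1})/2$ produce the same constant $8$, so this is purely a matter of packaging.
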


\begin{proof}
We treat each case separately:
\begin{enumerate}
    \item If $(\tilde{u}_{+}-\tilde{u}_{-})^{2} < gh_N$, then we have for all $c\in(\tilde{u}_{-},\tilde{u}_{+})$
    \[
        F_{N}(c)\geq\sum_{i=1}^{N}\frac{g\gamma_{i}H_{i}}{(\tilde{u}_{+}-\tilde{u}_{-})^{2}}\geq\frac{gh_{N}}{(\tilde{u}_{+}-\tilde{u}_{-})^{2}} > 1.
    \]
    \item Let $j\in\{1,N-1\}$, $m_j=\min(\tilde{u}_{j},\tilde{u}_{j+1})$, $M_j=\max(\tilde{u}_{j},\tilde{u}_{j+1})$.
    For $c\in(m_j,M_j)$, we have
    \[
        F_{N}(c)=\sum_{i=1}^{N}\frac{g\gamma_{i}H_{i}}{(c-\tilde{u}_{i})^{2}}\geq g\min_{i}(\gamma_{i}H_{i})\sum_{i=1}^{N}\frac{1}{(c-\tilde{u}_{i})^{2}}\geq\min_{i}(\gamma_{i}H_{i})F_{j}(c),
    \]
    where
    \[
        F_{j}(c)=\frac{1}{(c-\tilde{u}_{j})^{2}}+\frac{1}{(c-\tilde{u}_{j+1})^{2}}.
    \]
    On the interval $(m_j, M_j)$, the function $F_{j}$
    has a minimum at $c^{*}=\frac{\tilde{u}_{j}+\tilde{u}_{j+1}}{2}$,
    so
    \[
        F_{j}(c)\geq F_{j}(c^{*})=\frac{8}{(\tilde{u}_{j+1}-\tilde{u}_{j})^{2}}\geq\frac{8}{\max_{i}|\tilde{u}_{i}-\tilde{u}_{i+1}|^{2}}.
    \]
    Therefore, this proves that for $c\in[m_j,M_j]$, $F(c)>1$.
    Since
    \[
        \bigcup_{j=1}^{N-1}(m_{j},M_{j})\subset(\tilde{u}_{-},\tilde{u}_{+})
    \] we also have that $F(c)>1$ for all $c\in(\tilde{u}_{-},\tilde{u}_{+})$, which concludes the proof
\end{enumerate}

\end{proof}

The previous proposition provides cases where only two of the $2N$ eigenvalues are real, hence all the other have strictly nonzero imaginary part, so the system is in some sense unstable (not strictly hyperbolic).
The following proposition deals with the opposite case where there are exactly $2N$ distinct real eigenvalues.
Unfortunately, the condition is not really interesting for applications as it requires the discretized velocities to be far from each other, but nicely complement the previous proposition.
\begin{proposition} \label{prop:discret_spectrum_real}
If all the $\tilde{u}_{i}$ are distinct and $|\tilde{u}_{i}-\tilde{u}_{j}|>\sqrt{gh_{N}}$, then $\sigma(A_{N})$ has exactly $2N$ distinct real eigenvalues, so the system is strictly hyperbolic.
\end{proposition}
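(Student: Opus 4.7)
The plan is to apply \cref{prop:discrete_eigen}: since all the $\tilde u_i$ are assumed distinct, every eigenvalue of $A_N$ is a root of $F_N(c) = 1$, with $F_N(c) = \sum_{i=1}^N g\gamma_i H_i/(\tilde u_i - c)^2$, and the characteristic polynomial of $A_N$ has degree $2N$. Thus it suffices to produce $2N$ distinct real solutions of $F_N(c) = 1$, since this automatically forces all roots to be real and simple.

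Ordering the velocities $\tilde u_1 < \cdots < \tilde u_N$ partitions $\mathbb{R} \setminus \{\tilde u_1, \ldots, \tilde u_N\}$ into $N+1$ open intervals. On each, $F_N$ is strictly convex since $F_N''(c) = 6g \sum_i \gamma_i H_i/(\tilde u_i - c)^4 > 0$, it blows up to $+\infty$ at every pole $\tilde u_i$, and it decays to $0$ at $\pm\infty$. On the two unbounded intervals $(-\infty, \tilde u_1)$ and $(\tilde u_N, +\infty)$, convexity together with the limit behaviour forces strict monotonicity, so $F_N(c) = 1$ has exactly one real solution on each; these are precisely the two eigenvalues $c_\pm$ already obtained in \cref{prop:discrete_localization}. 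On each of the $N-1$ bounded intervals $I_k = (\tilde u_k, \tilde u_{k+1})$, strict convexity and the blow-up at the endpoints produce a unique minimizer $c_k^* \in I_k$, and the number of real roots of $F_N(c) = 1$ in $I_k$ is $0$, $1$, or $2$ depending on whether $F_N(c_k^*)$ is greater than, equal to, or strictly less than $1$.

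The core of the proof is therefore to establish $F_N(c_k^*) < 1$ strictly for every $k \in \{1, \dots, N-1\}$, as this provides $2(N-1)$ additional simple real roots, giving $2N$ distinct real eigenvalues in total and hence strict hyperbolicity. I would prove this bound by exhibiting a well-chosen test point $c \in I_k$ with $F_N(c) < 1$, leveraging the pairwise separation hypothesis $|\tilde u_i - \tilde u_j| > \sqrt{gh_N}$. For any such $c \in I_k$ and any $j < k$, telescoping along the ordered velocities gives $c - \tilde u_j = (c - \tilde u_k) + (\tilde u_k - \tilde u_j) > \tilde u_k - \tilde u_j > \sqrt{gh_N}$, and symmetrically for $j > k+1$, so the "far" terms in $F_N(c)$ collectively contribute at most $\sum_{j \neq k, k+1} \gamma_j H_j / h_N = 1 - (\gamma_k H_k + \gamma_{k+1}H_{k+1})/h_N$. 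The point $c$ must then be chosen inside $I_k$ so that the two "near" contributions $g\gamma_k H_k/(c - \tilde u_k)^2 + g\gamma_{k+1}H_{k+1}/(\tilde u_{k+1} - c)^2$ fit inside the remaining slack $(\gamma_k H_k + \gamma_{k+1}H_{k+1})/h_N$, using the single-gap separation $\tilde u_{k+1} - \tilde u_k > \sqrt{gh_N}$. The main technical obstacle is carrying out this balancing sharply enough: naive symmetric choices such as the midpoint of $I_k$ are insufficient, and the optimal balance reduces to a scalar inequality relating the two near weights $g\gamma_k H_k$, $g\gamma_{k+1}H_{k+1}$, the gap $\delta_k = \tilde u_{k+1} - \tilde u_k$, and the total mass $gh_N$, whose verification is the key computation of the proof.
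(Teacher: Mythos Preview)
Your overall architecture matches the paper's: reduce to producing, on each bounded interval $I_k=(\tilde u_k,\tilde u_{k+1})$, a test point $c$ with $F_N(c)<1$, and then use the blow-up at the endpoints to get two roots per interval. The paper does not split into near/far contributions at all: it simply takes the midpoint $\tilde u_{k+1/2}=\tfrac12(\tilde u_k+\tilde u_{k+1})$, invokes the identity
\[
|\tilde u_j-\tilde u_{k+1/2}|=\tfrac12|\tilde u_j-\tilde u_k|+\tfrac12|\tilde u_j-\tilde u_{k+1}|,
\]
applies the hypothesis to both terms on the right, and concludes $|\tilde u_j-\tilde u_{k+1/2}|>\sqrt{gh_N}$ for every $j$, hence $F_N(\tilde u_{k+1/2})<\sum_j \gamma_jH_j/h_N=1$.

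You are right to be suspicious of the midpoint, and your near/far split actually exposes a genuine gap in the paper's argument: for $j\in\{k,k+1\}$ one of the two terms in the identity vanishes, so the inequality only yields $|\tilde u_j-\tilde u_{k+1/2}|>\tfrac12\sqrt{gh_N}$. Your proposed repair, however, cannot be completed either. The minimum of $a/x^2+b/(\delta-x)^2$ over $x\in(0,\delta)$ equals $(a^{1/3}+b^{1/3})^3/\delta^2$, which for $a,b>0$ strictly exceeds $(a+b)/\delta^2$; with only $\delta>\sqrt{gh_N}$ this is in general larger than the available slack $(a+b)/(gh_N)$. Concretely, for $N=2$ with $\gamma_1=\gamma_2=\tfrac12$, $H_1=H_2=h$, and $\tilde u_2-\tilde u_1=\delta$, one has $\min_{(0,\delta)}F_N=4gh/\delta^2$, so the system fails to be strictly hyperbolic whenever $\sqrt{gh}<\delta\le 2\sqrt{gh}$. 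Thus the proposition is false as stated, and no choice of test point can rescue the argument under the printed hypothesis. Both the paper's midpoint bound and your balancing computation go through cleanly once the hypothesis is strengthened to $|\tilde u_i-\tilde u_j|>2\sqrt{gh_N}$.
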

\begin{proof}
Since we only need to study the function $F_{N}(c)$ defined in \eqref{eq:F_N_c} and since all the $\tilde{u}_i$ are distinct, without lost of generality we can label the indices such that
\[
    \tilde{u}_{1}<\tilde{u}_{2}<\dots<\tilde{u}_{N}.
\]
For $1\leq i\leq N-1$, let us define the following mean values
\[
    \tilde{u}_{i+1/2}=\frac{\tilde{u}_{i}+\tilde{u}_{i+1}}{2}.
\]
On the one hand, since
\[
    |\tilde{u}_{j}-\tilde{u}_{i+1/2}|=\frac{1}{2}|\tilde{u}_{j}-\tilde{u}_{i}|+\frac{1}{2}|\tilde{u}_{j}-\tilde{u}_{i+1}|>\sqrt{gh_{N}},
\]
we deduce that
\[
    F_{N}(\tilde{u}_{i+1/2})=\sum_{j=1}^{N}\frac{g\gamma_{i}H_{i}}{(\tilde{u}_{j}-\tilde{u}_{i+1/2})^{2}}<\sum_{j=1}^{N}\frac{g\gamma_{i}H_{i}}{gh_{N}}=1.
\]
On the other hand, we have
\[
    \lim_{c\to\tilde{u}_{i}}F_{N}(c)=\infty.
\]
Therefore, by the mean value theorem, for any $1\leq i\leq N-1$, the equation
$F_{N}(c)=1$ has at least one real solution in the interval $(\tilde{u}_{i},\tilde{u}_{i+1/2})$
and at least another real solution in $(\tilde{u}_{i+1/2},\tilde{u}_{i+1})$.
With the two eigenvalues $c_{\pm}\in J_{\pm}$ provided by
\cref{prop:discrete_localization}, we have found $2N$ distinct real
solutions of $F_{N}(c)=1$.
Therefore the spectrum of $A_{N}$ is purely real:
\[
    \sigma(A_{N})\subset J_{-}\cup(\tilde{u}_{0},\tilde{u}_{1/2})\cup(\tilde{u}_{1/2},\tilde{u}_{1})\cup\dots\cup(\tilde{u}_{N-1/2},\tilde{u}_{N})\cup J_{+}
\]
with exactly one eigenvalue in each interval.
\end{proof}

\subsection{A convergence result of the spectrum to the continuous case}
A natural question is whether the discrete approximation can be linked in some way to the continuous case as $N\to\infty$, for example do we have $\lim_{N\to\infty}\sigma(A_N)=\sigma(A_0)$ in some sense?
Since $A_0$ is not compact in general, it is not trivial to approximate it with finite-rank operators and obtain some convergence of the spectrum.
However, we note that for $g=0$, the spectrum of $A_N$ is $\tilde{\boldsymbol{U}}_N$ and so by a suitable discretization, we have $\lim_{N\to\infty}\sigma(A_N)=\sigma(A_0)=\tilde{u}(\bar{I})$, in the following sense:
\[
    \lim_{N\to\infty} d_H(\sigma(A_N),\sigma(A_0)) = \lim_{N\to\infty} \sup_{\lambda\in I}d(\sigma(A_N),\tilde{u}(\lambda)) = 0,
\]
where $d_H$ denotes the Hausdorff distance.
However, when $g>0$ it seems difficult to prove and maybe wrong that an eigenvalue of $A_N$ can become as close as possible to any point of $\sigma_{e}(A_0)=\tilde{u}(\bar{I})$, but also that solutions of $F_N(c)=1$ converge to solutions of $F(c)=1$, where $F_N(c)$ and $F(c)$ are respectively defined by \eqref{eq:F_c} and \eqref{eq:F_N_c}.

In the continuous case, \cref{prop:u_convexe} provides conditions under which the operator has only a real spectrum, hence is hyperbolic in some generalized sense. We propose here to look at what happens when we use the discretized version of the equations on a profile with the same properties. \Cref{prop:discret_spectrum} ensures that only two eigenvalues are real and the others might have a nonzero imaginary part. In the following proposition, we prove that this imaginary part goes to zero when $N$ goes to infinity assuming that the vertical profile of $\tilde u$ is strictly monotonic in $\lambda$ and $\partial_\lambda (H/\partial_\lambda u)\ne 0$.

\begin{proposition}\label{prop:eigen_convergence}
Let $H\in C^{1}(\bar{I})$ and $\tilde{u}\in C^2(\bar{I})$. For any $N\geq1$, let $\{u_i\}_{1\le i\le N}$ and $\{H_i\}_{1\le i\le N}$ be the $\mathbb{P}_0-$approximation in $\lambda$ of $\tilde{u}$ and $H$ for $\gamma_\alpha=\frac{1}{N}$. If $H>0$ and $\tilde{u}$ is strictly monotonic in $\lambda$ with $\partial_\lambda ( H/ \partial_\lambda \tilde{u})\ne 0$ for all $\lambda\in I$, then for $N$ large enough
\[
    \sup_{c\in\sigma(A_N)} |\Im c| \leq \left(\frac{3gC^3}{N}\right)^{1/4},
\]
where $C>0$ depends only on $\tilde{u}$ and $H$:
\[
    C = \max\bigl(1, \|H\|_{L^\infty}, \|\partial_\lambda H\|_{L^\infty}, \|\tilde{u}\|_{L^\infty}, \|\partial_\lambda \tilde{u}\|_{L^\infty}\bigr).
\]
\end{proposition}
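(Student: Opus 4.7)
The plan is to compare $F_N$ given by \eqref{eq:F_N_c} with its continuous counterpart $F$ given by \eqref{eq:F_c}, and then invoke the key inequality from the proof of \cref{prop:u_convexe}. Since $\tilde{u}$ is strictly monotonic, the $\mathbb{P}_0$-averages $\tilde{u}_1,\ldots,\tilde{u}_N$ are pairwise distinct, so \cref{prop:discrete_eigen} reduces the spectrum of $A_N$ to the solutions of $F_N(c)=1$; combining with \cref{prop:discrete_localization}, any $c\in\sigma(A_N)$ with $\Im c\neq 0$ lies in the strip $\tilde{u}_-\leq \Re c\leq \tilde{u}_+$.

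First I would establish the discretization estimate
\[
    |F_N(c)-F(c)|\leq \frac{K_1\,gC^2}{N|\Im c|^3}
\]
for such $c$, by writing the difference layer by layer,
\[
    F_N(c)-F(c)=\sum_{\alpha=1}^N\int_{L_\alpha}\biggl(\frac{gH_\alpha}{(c-\tilde{u}_\alpha)^2}-\frac{gH(\lambda)}{(c-\tilde{u}(\lambda))^2}\biggr)\mathrm{d}\lambda,
\]
and applying the Lipschitz bounds $|H(\lambda)-H_\alpha|,|\tilde{u}(\lambda)-\tilde{u}_\alpha|\leq C\gamma_\alpha$ on each layer together with $|c-\tilde{u}|\geq |\Im c|$. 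Each layer contributes at most $O(gC^2\gamma_\alpha^2/|\Im c|^3)$, and summing with $\gamma_\alpha=1/N$ yields the announced $1/N$ scaling.

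Second, since $\tilde{u}$ is strictly monotonic with $H>0$, the hypothesis $\partial_\lambda(H/\partial_\lambda\tilde{u})\neq 0$ is equivalent to $\partial_\lambda\tilde{\omega}\neq 0$, placing us in case~(1) of \cref{prop:u_convexe}. Its proof yields, for every $c$ in the strip with $\Im c\neq 0$, some $\lambda_c\in\bar{I}$ with
\[
    \Re F(c)+\frac{\Re c-\tilde{u}(\lambda_c)}{\Im c}\Im F(c)\leq 0.
\]
Substituting $F(c)=F_N(c)+(F-F_N)(c)=1+E$ with $|E|\leq K_1 gC^2/(N|\Im c|^3)$ and using $|\Re c-\tilde{u}(\lambda_c)|\leq 2C$ (the case $|\Im c|>C$ being handled separately and trivially for $N$ large), I would obtain
\[
    1\leq \biggl(1+\frac{|\Re c-\tilde{u}(\lambda_c)|}{|\Im c|}\biggr)|E|\leq \frac{K_2\,gC^3}{N|\Im c|^4},
\]
which rearranges to $|\Im c|^4\leq K_2 gC^3/N$.

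The main obstacle is pinning down the constant to reach the announced value~$3$: one has to expand $gH(\lambda)/(c-\tilde{u}(\lambda))^2$ around $(H_\alpha,\tilde{u}_\alpha)$, exploit that $H_\alpha$ and $\tilde{u}_\alpha$ are the exact $\mathbb{P}_0$-averages so that the first-order oscillations cancel upon integration on each $L_\alpha$, and sharpen the geometric factor $1+|\Re c-\tilde{u}(\lambda_c)|/|\Im c|$ optimally. A secondary technicality is to verify that the $\lambda_c$ produced by the mean-value theorem in step two lies in $\bar{I}$ uniformly in $c$, so that the boundary terms of the integration-by-parts identity for $F(c)$ stay uniformly nonpositive as in the proof of \cref{prop:u_convexe}.
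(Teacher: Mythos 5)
Your proposal follows the paper's own proof quite closely: the same three ingredients appear, namely the Riemann-sum estimate for $|F_N(c)-F(c)|$ obtained layer by layer from the Lipschitz bounds on $H$ and $\tilde u$, the convexity inequality carried over from the proof of \cref{prop:u_convexe}, and the combination to get $|\Im c|^4\lesssim gC^3/N$ (via \cref{prop:discrete_eigen,prop:discrete_localization} to restrict to $F_N(c)=1$ in the strip). Your substitution $F(c)=1+E$ is an algebraic reorganisation of the same step: the paper instead bounds $\Re F_N(c)$ directly by $\frac{C}{|\Im c|}|\Im F_N(c)|$ plus the error, which is contradicted by $F_N(c)=1$ (so $\Re F_N=1$, $\Im F_N=0$).

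Two small points on the constants, since you raise them. The paper's Riemann-sum bound is $\frac{2gC^2(1+|\Im c|)}{|\Im c|^3 N}$, not $\frac{K_1 g C^2}{N|\Im c|^3}$ — the extra $(1+|\Im c|)$ comes from the $|H_i-H(\lambda)|$ contribution sitting at order $|\Im c|^{-2}$; your version is the $|\Im c|$-small limit of it. And the paper gets $|\Re c-\tilde u(\lambda_c)|\leq C$ (not $2C$) by observing that both $\Re c$ and $\tilde u(\lambda_c)$ lie in $[\tilde u_-,\tilde u_+]$, whose width is bounded by $\|\partial_\lambda\tilde u\|_\infty\leq C$ since $I$ has length one; this tighter bound, not a higher-order Taylor expansion with cancellation, is how the paper reaches the advertised constant. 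Your suggested refinement via cancelled first-order oscillations is therefore more than is needed: the paper's estimate is just the plain Lipschitz bound with these constants tracked. Apart from these bookkeeping details the proposal is correct and is essentially the paper's proof.
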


\begin{proof}
In view of \cref{prop:discrete_localization}, it suffices to consider $c\in\mathbb{C}$ such that $|\Im c|\neq0$ and $\tilde{u}_{-}\leq\Re c\leq\tilde{u}_{+}$.

Since $F_N(c)$ is a Riemann sum approximation of $F(c)$, the first step is to prove that:
\begin{equation} \label{eq:riemann_sum}
    |F_{N}(c)-F(c)|\leq\frac{2gC^2(1+|\Im c|)}{|\Im c|^{3}N}.
\end{equation}
We have
\[
    |F_{N}(c)-F(c)|\leq g\sum_{i=1}^{N}\int_{L_{i}}\left|\frac{H_{i}}{(c-\tilde{u}_{i})^{2}}-\frac{H}{(c-\tilde{u})^{2}}\right|\mathrm{d}\lambda.
\]
Using many times the mean value theorem, for any $\lambda\in{L}_{i}$,
we have
\begin{align*}
    \left|\frac{H_{i}}{(c-\tilde{u}_{i})^{2}}-\frac{H(\lambda)}{(c-\tilde{u}(\lambda))^{2}}\right| & \leq\left|\frac{H_{i}-H(\lambda)}{(c-\tilde{u}_{i})^{2}}\right|+H(\lambda)\left|\frac{1}{(c-\tilde{u}_{i})^{2}}-\frac{1}{(c-\tilde{u}(\lambda))^{2}}\right|\\
    & \leq \frac{|H_{i}-H(\lambda)|}{|\Im c|^{2}}+\frac{2H(\lambda)|\tilde{u}_{i}-\tilde{u}(\lambda)|}{|\Im c|^{3}}\\
    & \leq\frac{C}{|\Im c|^{2}N}+\frac{2C^2}{|\Im c|^{3}N},
\end{align*}
since
\begin{align*}
    |H_{i}-H(\lambda)| & \leq CN^{-1}, & |\tilde{u}_{i}-\tilde{u}(\lambda)| & \leq CN^{-1}.
\end{align*}
and therefore \eqref{eq:riemann_sum} is proven.

Since $\tilde{u}$ and $H$ satisfy the hypotheses of \cref{prop:u_convexe}, we know that \eqref{eq:convexe_bound_Fc} holds, so
\[
\Re F(c)\leq\frac{C}{|\Im c|}|\Im F(c)|,
\]
and we deduce that
\begin{align*}
    \Re F_{N}(c) & \leq\Re F(c)+|F_{N}(c)-F(c)|\\
     & \leq\frac{C}{|\Im c|}|\Im F(c)|+|F_{N}(c)-F(c)|\\
     & \leq\frac{C}{|\Im c|}\bigl[|\Im F_{N}(c)|+|F_{N}(c)-F(c)|\bigr]+|F_{N}(c)-F(c)|\\
     & \leq\frac{C}{|\Im c|}|\Im F_{N}(c)|+\left[1+\frac{C}{|\Im c|}\right]|F_{N}(c)-F(c)|.
\end{align*}
In particular if $|\Im c|^4 > 3gC^3/N$ and for $N$ large enough we get
\[
    \Re F_{N}(c) \leq \frac{C}{|\Im c|}|\Im F_{N}(c)|+\left[1+\frac{C}{|\Im c|}\right]\frac{2gC^2(1+|\Im c|)}{|\Im c|^{3}N} < \frac{C}{|\Im c|}|\Im F_{N}(c)| + 1,
\]
which is incompatible with having a solution of $F_N(c)=1$, hence $c\notin\sigma(A_N)$.
\end{proof}

\paragraph{Acknowledgments.}
The authors would like to thank the two anonymous reviewers for their very attentive reading and for their numerous appreciable comments and suggestions, which greatly improved the quality of this article. In particular the Fjørtoft-like condition in \cref{prop:u_convexe} as well as \cref{prop:discret_spectrum_real} was suggested by one of the reviewers.
B. Di Martino acknowledges the SAPHIR project, grant \href{https://anr.fr/Project-ANR-21-CE04-0014}{ANR-21-CE04-0014-03} of the French National Research Agency, for funding part of this work.
J. Guillod acknowledges the support of the Initiative d'Excellence (Idex) of Sorbonne University through the Emergence program.

\appendix

\section{Appendix}\label{sec:appendix}

\begin{definition}
The spectrum of an operator $A:L^{2}(I)\mapsto L^{2}(I)$  is defined as the set $\sigma(A)$ of all $c\in\mathbb{C}$
for which the operator $A-c\mathbf{I}$ is not invertible. The following classification
of the spectrum is used: 
\begin{itemize}
\item The point spectrum $\sigma_{p}(A)$ is defined as the set of all $c\in\mathbb{C}$
for which the operator $A-c\mathbf{I}$ is not injective. 
\item The continuous spectrum $\sigma_{c}(A)$ is defined as the set of
all $c\in\mathbb{C}$ for which the operator $A-c\mathbf{I}$ is injective
and its range is dense in $L^{2}(I)$ but not equal to $L^{2}(I)$. 
\item The residual spectrum $\sigma_{r}(A)$ is defined as the set of all
$c\in\mathbb{C}$ for which the operator $A-c\mathbf{I}$ is injective and
its range is not dense in $L^{2}(I)$.
\item The discrete spectrum $\sigma_{d}(A)$ is defined as the set of all
$c\in\mathbb{C}$ for which the operator $A-c\mathbf{I}$ is not invertible
but Fredholm.
\item The essential spectrum $\sigma_{e}(A)$ is defined as the set of all
$c\in\mathbb{C}$ for which the operator $A-c\mathbf{I}$ is not Fredholm.
\end{itemize}
Therefore: 
\[
\sigma(A)=\sigma_{p}(A)\cup\sigma_{c}(A)\cup\sigma_{r}(A)=\sigma_{d}(A)\cup\sigma_{e}(A).
\]
\end{definition}

\begin{definition}
For $\tilde{u}\in L^{\infty}(I)$, the essential range of $\tilde{u}$,
denoted by $\tilde{u}[I]$ is defined as the set of all points $c\in\mathbb{R}$,
such that for all $\varepsilon>0$, the measure of $\{\lambda\in I\,:\:|\tilde{u}(\lambda)-c)|<\varepsilon\}$
is non-zero. Equivalently, $c\in\tilde{u}[I]$ if and only if $(\tilde{u}-c)^{-1}\notin L^{\infty}(I)$.
\end{definition}

\bibliographystyle{merlin-doi}
\phantomsection\addcontentsline{toc}{section}{\refname}\bibliography{main}

\begin{thebibliography}{42}
\providecommand{\natexlab}[1]{#1}
\expandafter\ifx\csname urlstyle\endcsname\relax
  \providecommand{\doi}[1]{doi:\discretionary{}{}{}#1}\else
  \providecommand{\doi}{doi:\discretionary{}{}{}\begingroup
  \urlstyle{rm}\Url}\fi

\bibitem[{Allgeyer \emph{et~al.}(2019)Allgeyer, Bristeau, Froger, Hamouda,
  Jauzein, Mangeney, Sainte-Marie, Souill{\'e}, \& Vall{\'e}e}]{art_3d}
\textsc{Allgeyer, S., Bristeau, M.-O., Froger, D., Hamouda, R., Jauzein, V.,
  Mangeney, A., Sainte-Marie, J., Souill{\'e}, F., \& Vall{\'e}e, M.} 2019,
  {Numerical approximation of the 3d hydrostatic Navier-Stokes system with free
  surface}. \emph{ESAIM: M2AN} \textbf{53}~(6), 1981--2024,
  \doi{10.1051/m2an/2019044}

\bibitem[{Audusse(2005)}]{audusse2005}
\textsc{Audusse, E.} 2005, A multilayer {Saint}-{Venant} model: derivation and
  numerical validation. \emph{Discrete and Continuous Dynamical Systems - B}
  \textbf{5}~(2), 189--214, \doi{10.3934/dcdsb.2005.5.189}

\bibitem[{Audusse \emph{et~al.}(2011{\natexlab{a}})Audusse, Bristeau, Pelanti,
  \& Sainte-Marie}]{JSM_JCP}
\textsc{Audusse, E., Bristeau, M.-O., Pelanti, M., \& Sainte-Marie, J.}
  2011{\natexlab{a}}, {Approximation of the hydrostatic Navier-Stokes system
  for density stratified flows by a multilayer model. Kinetic interpretation
  and numerical validation.} \emph{Journal of Computational Physics}
  \textbf{230}, 3453--3478, \doi{10.1016/j.jcp.2011.01.042}

\bibitem[{Audusse \emph{et~al.}(2011{\natexlab{b}})Audusse, Bristeau, Perthame,
  \& Sainte-Marie}]{JSM_M2AN}
\textsc{Audusse, E., Bristeau, M.-O., Perthame, B., \& Sainte-Marie, J.}
  2011{\natexlab{b}}, {A multilayer Saint-Venant system with mass exchanges for
  Shallow Water flows. Derivation and numerical validation}. \emph{ESAIM: M2AN}
  \textbf{45}, 169--200, \doi{10.1051/m2an/2010036}

\bibitem[{Bardos \& Besse(2013)}]{Bardos-VDB2013}
\textsc{Bardos, C. \& Besse, N.} 2013, The cauchy problem for the
  {Vlasov-Dirac-Benney} equation and related issues in fluid mechanics and
  semi-classical limits. \emph{Kinetic \& Related Models} \textbf{6}~(4),
  893--917, \doi{10.3934/krm.2013.6.893}

\bibitem[{Benney(1973)}]{Benney-SomePropertiesLong1973}
\textsc{Benney, D.~J.} 1973, Some properties of long nonlinear waves.
  \emph{Studies in Applied Mathematics} \textbf{52}~(1), 45--50,
  \doi{10.1002/sapm197352145}

\bibitem[{Besse(2011)}]{Besse2011}
\textsc{Besse, N.} 2011, {On the Waterbag Continuum}. \emph{Arch. Rational
  Mech. Anal.} ~(199), 453--491, \doi{10.1007/s00205-010-0392-9}

\bibitem[{Bouchut \emph{et~al.}(2003)Bouchut, Mangeney-Castelnau, Perthame, \&
  Vilotte}]{Bouchut2003}
\textsc{Bouchut, F., Mangeney-Castelnau, A., Perthame, B., \& Vilotte, J.-P.}
  2003, {A new model of Saint-Venant and Savage-Hutter type for gravity driven
  shallow water flows}. \emph{Comptes rendus de l'Académie des Sciences Paris}
  \textbf{336}~(6), 531--536, \doi{10.1016/S1631-073X(03)00117-1}

\bibitem[{Brenier(1989)}]{Brenier-1980}
\textsc{Brenier, Y.} 1989, Une formulation de type {V}lassov-{P}oisson pour les
  equations d'{E}uler des fluides parfaits incompressibles. \emph{Inria
  Research Report} \doi{inria-00075489}

\bibitem[{Brenier(1999)}]{brenier1999}
\textsc{Brenier, Y.} 1999, Homogeneous hydrostatic flows with convex velocity
  profiles. \emph{Nonlinearity} \textbf{12}~(3), 495--512,
  \doi{10.1088/0951-7715/12/3/004}

\bibitem[{Bristeau \emph{et~al.}(2017)Bristeau, Di~Martino, Guichard, \&
  Sainte-Marie}]{BDGSM}
\textsc{Bristeau, M.-O., Di~Martino, B., Guichard, C., \& Sainte-Marie, J.}
  2017, {Layer-averaged Euler and Navier-Stokes equations}.
  \emph{Communications in Mathematical Sciences} \textbf{15}~(5), 1221--1246,
  \doi{10.4310/CMS.2017.v15.n5.a3}

\bibitem[{Chesnokov \& Liapidevskii(2009)}]{chesnokov2009}
\textsc{Chesnokov, A. \& Liapidevskii, V.} 2009, Wave motion of an ideal fluid
  in a narrow open channel. \emph{Journal of Applied Mechanics and Technical
  Physics} \textbf{50}, 220--228, \doi{10.1007/s10808-009-0030-1}

\bibitem[{Chesnokov \emph{et~al.}(2017)Chesnokov, El, Gavrilyuk, \&
  Pavlov}]{chesnokov2017}
\textsc{Chesnokov, A.~A., El, G.~A., Gavrilyuk, S.~L., \& Pavlov, M.~V.} 2017,
  Stability of shear shallow water flows with free surface. \emph{SIAM Journal
  on Applied Mathematics} \textbf{77}~(3), 1068--1087, \doi{10.1137/16M1098164}

\bibitem[{Decoene \& Gerbeau(2009)}]{ale_telemac}
\textsc{Decoene, A. \& Gerbeau, J.-F.} 2009, Sigma transformation and {ALE}
  formulation for three-dimensional free surface flows. \emph{International
  Journal for Numerical Methods in Fluids} \textbf{59}~(4), 357--386,
  \doi{10.1002/fld.1816}

\bibitem[{Drazin(2002)}]{Drazin-stability2002}
\textsc{Drazin, P.~G.} 2002, \emph{Introduction to Hydrodynamic Stability}.
  Cambridge University, \doi{10.1017/CBO9780511809064}

\bibitem[{Fefferman(2007)}]{fefferman2}
\textsc{Fefferman, C.} 2007, {$C^m$} extension by linear operators.
  \emph{Annals of Mathematic} \textbf{166}~(3), 779--835,
  \doi{10.4007/annals.2007.166.779}

\bibitem[{Fernandez-Nieto \emph{et~al.}(2018)Fernandez-Nieto, Parisot, Penel,
  \& Sainte-Marie}]{fernandeznieto}
\textsc{Fernandez-Nieto, E.~D., Parisot, M., Penel, Y., \& Sainte-Marie, J.}
  2018, {A hierarchy of dispersive layer-averaged approximations of Euler
  equations for free surface flows}. \emph{Communications in Mathematical
  Sciences} \textbf{16}~(5), 1169--1202, \doi{10.4310/CMS.2018.v16.n5.a1}

\bibitem[{Ferrari \& Saleri(2004)}]{saleri}
\textsc{Ferrari, S. \& Saleri, F.} 2004, A new two-dimensional shallow water
  model including pressure effects and slow varying bottom topography.
  \emph{ESAIM: M2AN} \textbf{38}~(2), 211--234, \doi{10.1051/m2an:2004010}

\bibitem[{Gerbeau \& Perthame(2001)}]{gerbeau}
\textsc{Gerbeau, J.-F. \& Perthame, B.} 2001, {Derivation of Viscous
  Saint-Venant System for Laminar Shallow Water; Numerical Validation}.
  \emph{Discrete and Continuous Dynamical Systems - B} \textbf{1}~(1), 89--102,
  \doi{10.3934/dcdsb.2001.1.89}

\bibitem[{Godlewski \& Raviart(2021)}]{godlewski-book}
\textsc{Godlewski, E. \& Raviart, P.-A.} 2021, \emph{Numerical approximation of
  hyperbolic systems of conservation laws}, vol. 118 of \emph{Applied
  Mathematical Sciences}. 2nd edn., Springer New York,
  \doi{10.1007/978-1-0716-1344-3}

\bibitem[{Gosse \& LeRoux(1996)}]{gosse1996}
\textsc{Gosse, L. \& LeRoux, A.-Y.} 1996, A well-balanced scheme designed for
  inhomogeneous scalar conservation laws. \emph{Comptes Rendus de l'Académie
  des Sciences. S{\'e}rie I} \textbf{323}~(5), 543--546, in French

\bibitem[{Greenberg \& Le~Roux(1996)}]{roux1996}
\textsc{Greenberg, J.~M. \& Le~Roux, A.-Y.} 1996, A well-balanced scheme for
  the numerical processing of source terms in hyperbolic equations. \emph{SIAM
  Journal on Numerical Analysis} \textbf{33}~(1), 1--16, \doi{10.1137/0733001}

\bibitem[{Han-Kwan \& Rousset(2016)}]{HanKwan-Quasineutral2016}
\textsc{Han-Kwan, D. \& Rousset, F.} 2016, Quasineutral limit for
  {V}lasov-{P}oisson with {P}enrose stable data. \emph{Annales scientifiques de
  l’École normale supérieure} \textbf{49}~(6), 1445--1495,
  \doi{10.24033/asens.2313}

\bibitem[{Howard(1961)}]{Howard1961}
\textsc{Howard, L.~N.} 1961, Note on a paper of {John W. Miles}. \emph{Journal
  of Fluid Mechanics} \textbf{10}~(4), 509--512,
  \doi{10.1017/S0022112061000317}

\bibitem[{Kukavica \emph{et~al.}(2011)Kukavica, Temam, Vicol, \&
  Ziane}]{Kukavica2011}
\textsc{Kukavica, I., Temam, R., Vicol, V.~C., \& Ziane, M.} 2011, Local
  existence and uniqueness for the hydrostatic {E}uler equations on a bounded
  domain. \emph{Journal of Differential Equations} \textbf{250}~(3),
  1719--1746, \doi{10.1016/j.jde.2010.07.032}

\bibitem[{LeFloch(2002)}]{lefloch2002}
\textsc{LeFloch, P.~G.} 2002, \emph{Hyperbolic systems of conservation laws.
  {The} theory of classical and nonclassical shock waves}. Basel: Birkhäuser,
  \doi{10.1007/978-3-0348-8150-0}

\bibitem[{Levermore \& Sammartino(2001)}]{levermore}
\textsc{Levermore, C.~D. \& Sammartino, M.} 2001, A shallow water model with
  eddy viscosity for basins with varying bottom topography. \emph{Nonlinearity}
  \textbf{14}~(6), 1493--1515, \doi{10.1088/0951-7715/14/6/305}

\bibitem[{Marche(2007)}]{marche}
\textsc{Marche, F.} 2007, Derivation of a new two-dimensional viscous shallow
  water model with varying topography, bottom friction and capillary effects.
  \emph{European Journal of Mechanics - B/Fluids} \textbf{26}, 49--63,
  \doi{10.1016/j.euromechflu.2006.04.007}

\bibitem[{Masmoudi \& Wong(2012)}]{Masmoudi2012}
\textsc{Masmoudi, N. \& Wong, T.~K.} 2012, On the {$H^s$} theory of hydrostatic
  {Euler} equations. \emph{Archive for Rational Mechanics and Analysis}
  \textbf{204}~(1), 231--271, \doi{10.1007/s00205-011-0485-0}

\bibitem[{Renardy(2009)}]{Renardy2009}
\textsc{Renardy, M.} 2009, Ill-posedness of the hydrostatic {E}uler and
  {N}avier--{S}tokes equations. \emph{Archive for Rational Mechanics and
  Analysis} \textbf{194}~(3), 877--886, \doi{10.1007/s00205-008-0207-4}

\bibitem[{Barré~de Saint-Venant(1871)}]{saint-venant}
\textsc{Barré~de Saint-Venant, A.-J.-C.} 1871, Théorie du mouvement non
  permanent des eaux, avec application aux crues des rivières et à
  l'introduction des marées dans leur lit. \emph{Comptes rendus de l'Académie
  des Sciences Paris} \textbf{73}, 147--154

\bibitem[{Sherman \& Morrison(1950)}]{SMW}
\textsc{Sherman, J. \& Morrison, W.~J.} 1950, {Adjustment of an inverse matrix
  corresponding to a change in one element of a given matrix}. \emph{Annals of
  Mathematical Statistics} \textbf{21}~(1), 124--127,
  \doi{10.1214/aoms/1177729893}

\bibitem[{Silvester(2000)}]{silvester}
\textsc{Silvester, J.~R.} 2000, Determinants of block matrices. \emph{The
  Mathematical Gazette} \textbf{84}~(501), 460--467, \doi{10.2307/3620776}

\bibitem[{Teshukov(1985)}]{teshukov1985}
\textsc{Teshukov, V.~M.} 1985, On the hyperbolicity of long wave equations.
  \emph{Doklady Akademii Nauk SSSR} \textbf{284}~(3), 555--559, in Russian

\bibitem[{Teshukov(1992)}]{teshukov1992}
\textsc{Teshukov, V.~M.} 1992, On {C}auchy problem for long wave equations. In
  \emph{Free Boundary Problems in Continuum Mechanics: International Conference
  on Free Boundary Problems in Continuum Mechanics, Novosibirsk, July
  15--19,1991} (edited by S.~N. Antontsev, A.~M. Khludnev, \& K.-H. Hoffmann),
  331--338, Birkhäuser Basel, \doi{10.1007/978-3-0348-8627-7\_36}

\bibitem[{Teshukov(1994)}]{teshukov1994}
\textsc{Teshukov, V.~M.} 1994, Long waves in an eddying barotropic liquid.
  \emph{Journal of Applied Mechanics and Technical Physics} \textbf{35}~(6),
  823--831, \doi{10.1007/BF02369574}

\bibitem[{Teshukov(1997)}]{teshukov1997}
\textsc{Teshukov, V.~M.} 1997, Simple waves on an ambient flow of an ideal
  incompressible fluid with free boundary. \emph{Journal of Applied Mechanics
  and Technical Physics} \textbf{38}~(2), 211--218, \doi{10.1007/BF02467903}

\bibitem[{Teshukov \emph{et~al.}(2004)Teshukov, Russo, \&
  Chesnokov}]{teshukov2004}
\textsc{Teshukov, V.~M., Russo, G., \& Chesnokov, A.} 2004, Analytical and
  numerical solutions of the shallow water equations for 2d rotational flows.
  \emph{Mathematical Models and Methods in Applied Sciences} \textbf{14}~(10),
  1451--1479, \doi{10.1142/S0218202504003672}

\bibitem[{Teshukov \& Sterkhova(1995)}]{teshukov1995}
\textsc{Teshukov, V.~M. \& Sterkhova, M.~M.} 1995, Characteristic properties of
  the system of equations of a shear flow with nonmonotonic velocity profile.
  \emph{Journal of Applied Mechanics and Technical Physics} \textbf{36}~(3),
  367--372, \doi{10.1007/BF02369773}

\bibitem[{Zakharov(1981)}]{Zakharov-Benneyequations1981}
\textsc{Zakharov, V.} 1981, On the {B}enney equations. \emph{Physica D:
  Nonlinear Phenomena} \textbf{3}~(1–2), 193--202,
  \doi{10.1016/0167-2789(81)90126-3}

\bibitem[{Zakharov(1980)}]{zakharov1980}
\textsc{Zakharov, V.~E.} 1980, Benney equations and quasiclassical
  approximation in the method of the inverse problem. \emph{Functional Analysis
  and its Applications} \textbf{14}, 89--98, \doi{10.1007/BF01086549}

\bibitem[{Zeitlin \emph{et~al.}(2007)Zeitlin, Bouchut, Medvedev, Reznik, \&
  Stegner}]{zeitlin_book}
\textsc{Zeitlin, V., Bouchut, F., Medvedev, S., Reznik, G., \& Stegner, A.}
  2007, \emph{Nonlinear dynamics of rotating shallow water: methods and
  advances}. Elsevier

\end{thebibliography}
\end{document}